\documentclass[oneside,english]{amsart}
\usepackage[T1]{fontenc}
\usepackage[latin9]{luainputenc}
\usepackage{geometry}
\geometry{verbose,tmargin=3cm,bmargin=3cm,lmargin=3cm,rmargin=3cm}
\usepackage{array}
\usepackage{longtable}
\usepackage{float}
\usepackage{multirow}
\usepackage{amstext}
\usepackage{amsthm}
\usepackage{amssymb}
\usepackage{stmaryrd}
\usepackage{graphicx}
\usepackage{setspace}
\usepackage{wasysym}

\makeatletter

\providecommand{\tabularnewline}{\\}

\numberwithin{equation}{section}
\numberwithin{figure}{section}
\theoremstyle{plain}
\newtheorem{thm}{\protect\theoremname}
\theoremstyle{remark}
\newtheorem{rem}[thm]{\protect\remarkname}
\theoremstyle{definition}
\newtheorem{defn}[thm]{\protect\definitionname}
\theoremstyle{definition}
\newtheorem{example}[thm]{\protect\examplename}
\theoremstyle{plain}
\newtheorem{prop}[thm]{\protect\propositionname}
\theoremstyle{plain}
\newtheorem{cor}[thm]{\protect\corollaryname}

\makeatother

\usepackage{babel}
\providecommand{\corollaryname}{Corollary}
\providecommand{\definitionname}{Definition}
\providecommand{\examplename}{Example}
\providecommand{\propositionname}{Proposition}
\providecommand{\remarkname}{Remark}
\providecommand{\theoremname}{Theorem}

\begin{document}
\title{Centres of centralizers of nilpotent elements in Lie superalgebras
$\mathfrak{sl}(m|n)$ or $\mathfrak{osp}(m|2n)$}
\author{Leyu Han}
\begin{abstract}
\singlespacing{}\noindent Let $\bar{G}$ be the simple algebraic supergroup $\mathrm{SL}(m|n)$
or $\mathrm{OSp}(m|2n)$ over $\mathbb{C}$. Let $\mathfrak{g}=\mathrm{Lie}(\bar{G})=\mathfrak{g}_{\bar{0}}\oplus\mathfrak{g}_{\bar{1}}$
and let $G=\bar{G}(\mathbb{C})$ where $\mathbb{C}$ is considered
as a superalgebra concentrated in even degree. Suppose $e\in\mathfrak{g}_{\bar{0}}$
is nilpotent. We describe the centralizer $\mathfrak{g}^{e}$ of $e$
in{\normalsize{} }$\mathfrak{g}$ and its centre $\mathfrak{z}(\mathfrak{g}^{e})$.
In particular, we give bases for $\mathfrak{g}^{e}$, $\mathfrak{z}(\mathfrak{g}^{e})$
and $\left(\mathfrak{z}(\mathfrak{g}^{e})\right)^{G^{e}}$. We also
determine the labelled Dynkin diagram $\varDelta$ with respect to
$e$ and subsequently describe the relation between $\left(\mathfrak{z}(\mathfrak{g}^{e})\right)^{G^{e}}$
and $\varDelta$.
\end{abstract}

\maketitle
\begin{singlespace}

\section{Introduction}
\end{singlespace}

\begin{singlespace}
\noindent Let $\bar{G}$ be the simple algebraic supergroup $\mathrm{SL}(m|n)$
or $\mathrm{OSp}(m|2n)$ over $\mathbb{C}$. Let $\mathfrak{g}=\mathrm{Lie}(\bar{G})=\mathfrak{g}_{\bar{0}}\oplus\mathfrak{g}_{\bar{1}}$
and let $G=\bar{G}(\mathbb{C})$ where $\mathbb{C}$ is considered
as a superalgebra concentrated in even degree. Let $e\in\mathfrak{g}_{\bar{0}}$
be nilpotent and write $\mathfrak{g}^{e}=\{x\in\mathfrak{g}:[e,x]=0\}$
(resp. $G^{e}=\{g\in G:geg^{-1}=e\}$) for the centralizer of $e$
in $\mathfrak{g}$ (resp. $G$). The main result of this paper is
an explicit description of the centre of centralizer $\mathfrak{z}(\mathfrak{g}^{e})=\{x\in\mathfrak{g}^{e}:[x,y]=0\text{ for all }y\in\mathfrak{g}^{e}\}$
of $e$ in $\mathfrak{g}$ and its structure $\left(\mathfrak{z}(\mathfrak{g}^{e})\right)^{G^{e}}$
under the adjoint action of $G^{e}$ where $\left(\mathfrak{z}(\mathfrak{g}^{e})\right)^{G^{e}}=\{x\in\mathfrak{z}(\mathfrak{g}^{e}):gxg^{-1}=x\text{ for all }g\in G^{e}\}$.
The present paper is one of two papers in which we calculate bases
for $\mathfrak{g}^{e}$, $\mathfrak{z}(\mathfrak{g}^{e})$ and $\left(\mathfrak{z}(\mathfrak{g}^{e})\right)^{G^{e}}$
for $\mathfrak{g}$ a basic classical Lie superalgebra except $\mathfrak{psl}(n|n)$.
In this paper, we deal with Lie superalgebras $\mathfrak{sl}(m|n)$
and $\mathfrak{osp}(m|2n)$, while the other deals with exceptional
Lie superalgebras $D(2,1;\alpha)$, $G(3)$ and $F(4)$.
\end{singlespace}

Research on the centralizer and the centre of centralizer of nilpotent
elements in simple Lie algebras has been developed over the years
since Springer \cite{Springer1966} considered the centralizer $G^{u}$
of a unipotent element $u$ in a simple algebraic group $G$. For
$G$ is of exceptional type, further study of $G^{u}$ was carried
out in \cite{Chang1968}, \cite{Stuhler1971}, \cite{Shinoda1974},
\cite{Shoji1974}, \cite{Mizuno1977} and\cite{Mizuno1980}. A description
of the structure of the centralizer $\mathfrak{g}^{e}$ of nilpotent
elements in classical Lie algebras can be found in Jantzen's monograph,
\cite{Jantzen2004}. In \cite{Yakimova2009}, Yakimova identified
the centre of $\mathfrak{g}^{e}$ for classical Lie algebra $\mathfrak{g}$
over a field of characteristic zero. Note that the structure of $\left(\mathfrak{z}(\mathfrak{g}^{e})\right)^{G^{e}}$
described in this paper is a new result not only in the case of Lie
superalgebras, but also in the case of Lie algebras. In \cite{Seitz2000}
and \cite{Seitz2004}, Seitz further considered $Z(G^{u})$. Lawther--Testerman
studied the centralizer $G^{u}$ and its centre $Z(G^{u})$ over a
field of characteristic $0$ or a good prime in \cite{Lawther2008}.
They subsequently determined the dimension of the Lie algebra of $Z(G^{u})$.
In particular, Lawther--Testerman \cite{Lawther2008} used work of
Yakimova in \cite{Yakimova2009} to deal with classical cases. In
recent work, Liebeck and Seitz developed a new approach to classify
unipotent and nilpotent orbits in \cite{Liebeck2012}. 

To our best knowledge, there is a lot less study in this direction
in the case of Lie superalgebras. Finite-dimensional simple Lie superalgebras
over an algebraically closed field of characteristic zero were classified
by V. G. Kac in \cite{Kac1977}. After this classification, a wide
range of relevant problems have drawn the attention of mathematicians.
Determining the centralizers $\mathfrak{g}^{e}$ of nilpotent even
elements in $\mathfrak{g}=\mathfrak{gl}(m|n)$ and $\mathfrak{osp}(m|2n)$
for a field of zero or prime characteristic formed part of the results
of recent work of Wang and Zhao \cite{Wang2009} and Hoyt \cite{Hoyt2012}.
However, more than forty years after Kac's classification, the dimension
of the centre of $\mathfrak{g}^{e}$ where $\mathfrak{g}$ is a simple
Lie superalgebra still remains a mystery. We attempt to discover this
mystery in the present paper so that our knowledge about $\mathfrak{z}(\mathfrak{g}^{e})$
can be broaden to a similar level to that in the area of Lie algebras.

In the remaining part of this introduction, we give a more detailed
survey of our results.

\begin{singlespace}
In this paper, note that $\mathfrak{g}=\mathfrak{sl}(m|n)$ and $G=\{(A,B):A\in\mathrm{GL}_{m}(\mathbb{C}),B\in\mathrm{GL}_{n}(\mathbb{C})\text{ and }\mathrm{det}(A)=\mathrm{det}(B)\}$,
or $\mathfrak{g}=\mathfrak{osp}(m|2n)$ and $G=\mathrm{O}_{m}(\mathbb{C})\times\mathrm{Sp}_{2n}(\mathbb{C})$.
More generally, if $\mathfrak{g}$ is a direct sum of Lie superalgebras
of the form $\mathfrak{sl}(m_{i}|n_{i})$ or $\mathfrak{osp}(m_{i}|n_{i})$,
we define $G$ similarly. 
\end{singlespace}

In Subsection \ref{subsec:Lablled-Dynkin-diagrams}, we give a full
definition of the labelled Dynkin diagram with respect to $e$. We
classify the labelled Dynkin diagrams using Dynkin pyramids. Pyramids
are defined in \cite{Elashvili2005} for non-super case and generalized
in \cite{Hoyt2012} for super case. Each pyramid determines a nilpotent
element $e$ and a semisimple element $h$ such that $\{e,h\}$ can
be extended to an $\mathfrak{sl}(2)$-triple in $\mathfrak{g}_{\bar{0}}$.
We use $h$ to determine the labelled Dynkin diagram with respect
to $e$. In contrast to the case of non-super Lie algebras, in general
$e$ determines more than one labelled Dynkin diagram. 

\begin{singlespace}
Our results can be viewed as Lie superalgebra versions of those obtained
by Lawther and Testerman in \cite{Lawther2008}. They obtain four
theorems as direct results of their findings. In this paper, we obtain
analogues of Theorems 2--4 in \cite{Lawther2008} for Lie superalgebras
$\mathfrak{sl}(m|n)$ and $\mathfrak{osp}(m|2n)$. We take $\left(\mathfrak{z}(\mathfrak{g}^{e})\right)^{G^{e}}$
as the correct replacement for $Z(G^{e})$ since $\mathrm{Lie}(Z(G^{e}))=\left(\mathfrak{z}(\mathfrak{g}^{e})\right)^{G^{e}}$
for a field of characteristic $0$. Note that for $\mathfrak{g}=\mathfrak{sl}(m|n)$,
we have $\left(\mathfrak{z}(\mathfrak{g}^{e})\right)^{G^{e}}=\mathfrak{z}(\mathfrak{g}^{e})$.
\end{singlespace}

Fix $\varDelta$ to be a labelled Dynkin diagram with respect to $e$.
Denote by $n_{i}(\varDelta)$ the number of nodes which have labels
equal to $i$ in $\varDelta$. As a consequence of our calculations,
we observe that the choice of $\varDelta$ does not affect the following
theorems and labels in $\varDelta$ can only be $0,1$ or $2$. 

Our first theorem concerns the case where labels in $\varDelta$ are
even. 
\begin{thm}
\begin{singlespace}
\noindent Let $\mathfrak{g}=\mathfrak{g}_{\bar{0}}\oplus\mathfrak{g}_{\bar{1}}=\mathfrak{sl}(m|n)$
or $\mathfrak{osp}(m|2n)$ and $e\in\mathfrak{g}_{\bar{0}}$ be nilpotent.
Assume $\varDelta$ has no label equal to $1$, then 
\end{singlespace}

\noindent (1) $\dim\left(\mathfrak{z}(\mathfrak{g}^{e})\right)^{G^{e}}=n_{2}(\varDelta)=\dim\mathfrak{z}(\mathfrak{g}^{h})$
for \textup{$\mathfrak{g}=\mathfrak{sl}(m|n),m\neq n$ or }$\mathfrak{osp}(m|2n)$;

\begin{singlespace}
\noindent (2)\textup{ $\dim\left(\mathfrak{z}(\mathfrak{g}^{e})\right)^{G^{e}}-1=n_{2}(\varDelta)=\dim\mathfrak{z}(\mathfrak{g}^{h})$
}for \textup{$\mathfrak{g}=\mathfrak{sl}(n|n)$.}
\end{singlespace}
\end{thm}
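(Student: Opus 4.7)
The plan is to exploit the hypothesis $n_1(\varDelta)=0$ to work with an even $\mathbb{Z}$-grading of $\mathfrak{g}$, and then to read dimensions off the explicit bases of $\mathfrak{g}^{e}$, $\mathfrak{z}(\mathfrak{g}^{e})$ and $\left(\mathfrak{z}(\mathfrak{g}^{e})\right)^{G^{e}}$ produced from the Dynkin pyramid of $e$ in the earlier sections. When no label equals $1$, consecutive columns of the pyramid carry the same parity of row indices, and $\mathrm{ad}(h)$ defines $\mathfrak{g}=\bigoplus_{i\in 2\mathbb{Z}}\mathfrak{g}(i)$ with $e\in\mathfrak{g}(2)$ and $\mathfrak{g}^{h}=\mathfrak{g}(0)$. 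In particular $\mathfrak{g}^{e}\subseteq\bigoplus_{i\geq 0}\mathfrak{g}(2i)$, which is where the hypothesis enters the counting.

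For the equality $n_{2}(\varDelta)=\dim\mathfrak{z}(\mathfrak{g}^{h})$, I would argue that $\mathfrak{g}(0)$ is the Levi subalgebra generated by the Cartan $\mathfrak{h}$ together with the root spaces of the label-$0$ simple roots, so $\mathfrak{z}(\mathfrak{g}(0))$ is the common kernel of those simple roots inside $\mathfrak{h}$. For $\mathfrak{sl}(m|n)$ (including $m=n$, where the Cartan already has dimension equal to the number of simple roots by virtue of the super-trace-zero condition) and for $\mathfrak{osp}(m|2n)$, this kernel has dimension $n_{2}(\varDelta)$. For the companion equality $\dim\left(\mathfrak{z}(\mathfrak{g}^{e})\right)^{G^{e}}=n_{2}(\varDelta)$ in part~(1) I would appeal to the pyramid basis of $\mathfrak{z}(\mathfrak{g}^{e})$ from the earlier sections: its elements split into \emph{semisimple-type} vectors indexed by the column boundaries of the pyramid and lying in $\mathfrak{g}(0)^{e}$, and \emph{nilpotent-type} vectors lying in $\mathfrak{g}(2k)$ with $k\geq 1$. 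A direct check shows that $G^{e}$ kills the nilpotent-type vectors, so the surviving invariants are indexed by column boundaries, which in turn are in canonical bijection with the label-$2$ nodes of $\varDelta$. In the $\mathfrak{sl}$ cases the remark $\left(\mathfrak{z}(\mathfrak{g}^{e})\right)^{G^{e}}=\mathfrak{z}(\mathfrak{g}^{e})$ from the introduction lets us count straight from the centre.

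For $\mathfrak{g}=\mathfrak{sl}(n|n)$, the identity matrix $I$ has super-trace zero, hence lies in $\mathfrak{sl}(n|n)$, and is central in all of $\mathfrak{g}$ (so in $\mathfrak{g}^{e}$); this produces one additional $G^{e}$-invariant central element not indexed by any column boundary, accounting for the extra $1$ in part~(2). The same $I$ sits in $\mathfrak{z}(\mathfrak{g}^{h})$, but it has already been absorbed into the Cartan-dimension term of the Levi computation above, which is why the equality $\dim\mathfrak{z}(\mathfrak{g}^{h})=n_{2}(\varDelta)$ requires no correction in either part. The main obstacle I anticipate is carrying the bijection "column boundary $\leftrightarrow$ label-$2$ node" uniformly through the subcases $\mathfrak{sl}(m|n)$, $\mathfrak{osp}(2k|2n)$, $\mathfrak{osp}(2k+1|2n)$: the $\mathfrak{osp}$ invariant form forces a folded (centrally symmetric) pyramid with a slightly different centraliser basis, and one must verify case by case that the folding preserves both the label-$2$ count and the $G^{e}$-invariance argument, including at the middle column where the symmetry interacts with the orthogonal/symplectic constraints.
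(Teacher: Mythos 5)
Your overall strategy (read everything off the pyramid and the explicit bases from the earlier sections) is the paper's strategy, and your Levi-type computation of $\dim\mathfrak{z}(\mathfrak{g}^{h})$ as the common kernel of the label-$0$ simple roots is a legitimate variant of the paper's direct block-matrix computation of $\mathfrak{g}^{h}\cong\bigoplus_{i}\mathfrak{gl}(r_{i}|s_{i})$ (plus an $\mathfrak{osp}(r_{0}|s_{0})$ summand), at least when those simple roots are linearly independent on $\mathfrak{h}$. But the core step for $\dim\left(\mathfrak{z}(\mathfrak{g}^{e})\right)^{G^{e}}$ rests on a picture of $\mathfrak{z}(\mathfrak{g}^{e})$ that is essentially inverted. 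By the paper's Theorems in Subsections \ref{subsec:cc-gl(m,n)} and \ref{subsec:cc-osp}--\ref{subsec:Adjoint-action-of-osp}, $\mathfrak{z}(\mathfrak{g}^{e})$ is spanned by the powers $e,\dots,e^{\lambda_{1}-1}$ (odd powers only, in the $\mathfrak{osp}$ case), possibly augmented by $I$ when $m=n$ or by one sporadic element such as $\xi_{1}^{2,\lambda_{2}-1}-\xi_{2}^{1,\lambda_{1}-1}$ in the $\mathfrak{osp}$ special cases. There is no family of degree-zero ``semisimple-type'' central vectors indexed by column boundaries: $\mathfrak{z}(\mathfrak{g}^{e})\cap\mathfrak{g}(0)$ is at most one-dimensional. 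Moreover, every power of $e$ is automatically fixed by $G^{e}$, since $geg^{-1}=e$ forces $ge^{k}g^{-1}=e^{k}$; so $G^{e}$ cannot ``kill the nilpotent-type vectors'' --- those are precisely the invariants that survive, and the identity $\dim\left(\mathfrak{z}(\mathfrak{g}^{e})\right)^{G^{e}}=n_{2}(\varDelta)$ comes from counting them: $\lambda_{1}-1$ powers versus $\lambda_{1}-1$ label-$2$ nodes for $\mathfrak{sl}(m|n)$, and $\lceil\tfrac{\lambda_{1}-1}{2}\rceil=\lfloor\tfrac{\lambda_{1}}{2}\rfloor$ odd powers versus $\lfloor\tfrac{\lambda_{1}}{2}\rfloor$ label-$2$ nodes for $\mathfrak{osp}(m|2n)$. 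The only work the $G^{e}$-action does in the paper is to eliminate the sporadic extra elements in the $\mathfrak{osp}$ case, via explicit involutions $Q\in G^{e}$ that negate them. As written, your argument would not yield the theorem.

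A secondary issue: your parenthetical treatment of $m=n$ in the $\mathfrak{z}(\mathfrak{g}^{h})$ computation does not go through. On the supertraceless Cartan of $\mathfrak{sl}(n|n)$ the simple roots become linearly dependent (their common kernel contains $I$), so the kernel of the label-$0$ roots can have dimension $n_{2}(\varDelta)+1$ rather than $n_{2}(\varDelta)$; equivalently, in the paper's block description the constraint $\sum d_{i}(r_{i}-s_{i})=0$ can degenerate. This case needs to be handled separately (as the paper does, by tracking the extra copy of $I$), not folded into the generic count.
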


\begin{singlespace}
\noindent Our next result gives a more general result relating $\dim\left(\mathfrak{z}(\mathfrak{g}^{e})\right)^{G^{e}}$
and $\varDelta$. 
\end{singlespace}
\begin{thm}
\begin{singlespace}
\noindent Let $\mathfrak{g}=\mathfrak{g}_{\bar{0}}\oplus\mathfrak{g}_{\bar{1}}=\mathfrak{sl}(m|n)$
or $\mathfrak{osp}(m|2n)$ and $e\in\mathfrak{g}_{\bar{0}}$ be nilpotent.
Let $a_{1},\dots,a_{l}$ be the labels in $\varDelta$. Then 
\[
\dim\left(\mathfrak{z}(\mathfrak{g}^{e})\right)^{G^{e}}=\left\lceil \frac{1}{2}\sum_{i=1}^{l}a_{i}\right\rceil +\varepsilon
\]
where the value of $\varepsilon$ is equal to $0$ except when $\mathfrak{g}=\mathfrak{sl}(n|n)$,
we have $\varepsilon=1$.
\end{singlespace}
\end{thm}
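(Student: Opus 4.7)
The plan is to derive Theorem 2 from Theorem 1 together with the explicit basis of $(\mathfrak{z}(\mathfrak{g}^{e}))^{G^{e}}$ constructed earlier in the paper. As observed in the paragraph preceding Theorem 1, every label in $\varDelta$ lies in $\{0,1,2\}$, so $\sum_i a_i = n_1(\varDelta) + 2\,n_2(\varDelta)$, which gives
\[
\left\lceil \tfrac{1}{2}\sum_{i=1}^{l} a_i \right\rceil = n_2(\varDelta) + \left\lceil \tfrac{n_1(\varDelta)}{2}\right\rceil.
\]
Thus the theorem is equivalent to
\[
\dim \bigl(\mathfrak{z}(\mathfrak{g}^{e})\bigr)^{G^{e}} = n_2(\varDelta) + \left\lceil \tfrac{n_1(\varDelta)}{2}\right\rceil + \varepsilon.
\]
When $n_1(\varDelta) = 0$ this is precisely Theorem 1, so the new content is the contribution of label-1 nodes.

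Next I would translate $n_1(\varDelta)$ into a statement about the Dynkin pyramid attached to $e$. A label-1 node arises exactly when two adjacent columns of the pyramid carry opposite parities and are vertically offset by one box; that is, $n_1(\varDelta)$ counts the parity-switching transitions between the even and odd parts of the pyramid. This groups the label-1 nodes into runs determined by the arrangement of even and odd columns.

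Then I would invoke the explicit bases for $\mathfrak{g}^{e}$, $\mathfrak{z}(\mathfrak{g}^{e})$ and $(\mathfrak{z}(\mathfrak{g}^{e}))^{G^{e}}$ obtained in the earlier sections: the $G^{e}$-invariant central elements are parametrised by pairs of columns of matching shape, with each label-2 transition contributing one such pair (which reproduces Theorem 1), while label-1 transitions pair up in twos to yield one further invariant element per pair, and an isolated leftover label-1 transition still contributes on its own. This accounts for the $\lceil n_1(\varDelta)/2 \rceil$ extra basis elements beyond the $n_2(\varDelta)$ counted in Theorem 1. The correction $\varepsilon = 1$ for $\mathfrak{sl}(n|n)$ arises because $\mathrm{str}(I_{2n})=0$, so the identity matrix lies in $\mathfrak{sl}(n|n)$ and supplies a central $G^{e}$-invariant element independent of $\varDelta$; in all other cases the identity is not in $\mathfrak{g}$, matching $\varepsilon = 0$.

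The main obstacle will be the orthosymplectic case. For $\mathfrak{osp}(m|2n)$ the Dynkin pyramid carries a reflection symmetry about its vertical axis, so parity-switching transitions appear in mirror-image pairs, with the possible exception of a single transition centred on the axis. The pairing of label-1 nodes into $G^{e}$-invariant central elements must be carried out compatibly with this symmetry, and I would handle this by case analysis according to the parity of $m$, the parity of $n$, and the parity and position of the central column. Verifying in every configuration that the count of invariant elements agrees with $\lceil n_1(\varDelta)/2 \rceil$ then gives the theorem.
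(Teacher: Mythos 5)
There is a genuine gap at the heart of your plan. The reduction $\left\lceil \tfrac{1}{2}\sum a_i\right\rceil = n_2(\varDelta)+\left\lceil \tfrac{n_1(\varDelta)}{2}\right\rceil$ is fine, but the step that carries all the content --- that the label-$1$ nodes contribute exactly $\left\lceil n_1(\varDelta)/2\right\rceil$ to $\dim\left(\mathfrak{z}(\mathfrak{g}^{e})\right)^{G^{e}}$ --- is only asserted, via a pairing heuristic (``label-1 transitions pair up in twos\dots an isolated leftover still contributes on its own'') that is never justified and is not how the invariant centre is actually organised. The paper's computations show that $\left(\mathfrak{z}(\mathfrak{g}^{e})\right)^{G^{e}}$ is spanned by powers of $e$ (all powers $e,\dots,e^{\lambda_1-1}$ for $\mathfrak{sl}(m|n)$, odd powers for $\mathfrak{osp}(m|2n)$), not by elements ``parametrised by pairs of columns of matching shape,'' so there is no natural bijection between invariant central elements and pairs of label-$1$ nodes to appeal to; proving your pairing claim would amount to re-proving the theorem. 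Your description of where label-$1$ nodes come from is also off: a label $a_i=\mathrm{col}(i+1)-\mathrm{col}(i)$ equals $1$ when consecutive nonempty columns of the pyramid are at horizontal distance $1$, equivalently when parts of the partition of different integer parity meet; it has nothing to do with the $\mathbb{Z}_2$-parities $\bar{0},\bar{1}$ of the rows, and the proposed case analysis over the superparities in the orthosymplectic case is therefore aimed at the wrong invariant.

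The paper closes this gap with a one-line telescoping argument that your proposal misses entirely: since $a_i=\mathrm{col}(i+1)-\mathrm{col}(i)$, the sum $\sum_i a_i$ telescopes to $\mathrm{col}(m+n)-\mathrm{col}(1)=2\lambda_1-2$ for $\mathfrak{sl}(m|n)$, and to $\lambda_1-1$ or $\lambda_1$ for $\mathfrak{osp}(m|2n)$ (after the short case analysis for the last node). Combining this with the explicit dimensions $\dim\left(\mathfrak{z}(\mathfrak{g}^{e})\right)^{G^{e}}=\lambda_1-1+\varepsilon$, respectively $\left\lceil \tfrac{\lambda_1-1}{2}\right\rceil$, established earlier gives the theorem directly, with no need to count $n_1(\varDelta)$ and $n_2(\varDelta)$ separately. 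Your identification of $\varepsilon$ with the presence of $I$ in $\mathfrak{sl}(n|n)$ is correct, but as it stands the proposal replaces the decisive identity by an unproven combinatorial claim.
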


Theorem 1 is subsumed by a more general result as stated in Theorem
3 and the proof of which involves more techniques. In order to state
Theorem 3, we require some notations first. Define the sub-labelled
Dynkin diagram $\varDelta_{0}$ to be the \textit{$2$-free core of
$\varDelta$} such that $\varDelta_{0}$ is obtained by removing all
nodes with labels equal to $2$ from $\varDelta$. For $\mathfrak{g}=\mathfrak{sl}(m|n)$
(resp. $\mathfrak{g}=\mathfrak{osp}(m|2n)$), let $\lambda$ be a
partition of $(m|n)$ (resp. $(m|2n)$) and let $P$ be the Dynkin
pyramid (resp. ortho-symplectic Dynkin pyramid) of shape $\lambda$
which will be defined in Subsection \ref{subsec:Dynkin-pyramid-A(m,n)}
(resp. Subsection \ref{subsec:Hoyt's-ortho-symplectic-Dynkin}). Let
$r_{i}$ (resp. $s_{i}$) be the number of boxes on the $i$th column
with parity $\bar{0}$ (resp. $\bar{1}$) in $P$ and $k\geq0$ be
minimal such that the $k$th column in $P$ contains no boxes. Then
we define $\tau$ to be:

\begin{singlespace}
\noindent (1) the number of $i$ such that $r_{i}=s_{i}\neq0$ for
all $i>k$ or $i<-k$ when $\mathfrak{g}=\mathfrak{sl}(m|n)$;

\noindent (2) the number of $i$ such that $r_{i}=s_{i}\neq0$ for
all $i>k$ when $\mathfrak{g}=\mathfrak{osp}(m|2n)$.

\noindent We also define 
\[
\nu_{0}=\begin{cases}
0 & \text{if }\mathfrak{g}=\mathfrak{sl}(m|n),\sum_{-k<i<k}r_{i}\neq\sum_{-k<i<k}s_{i}\text{ or }\mathfrak{g}=\mathfrak{osp}(m|2n);\\
1 & \text{if }\mathfrak{g}=\mathfrak{sl}(m|n),\sum_{-k<i<k}r_{i}=\sum_{-k<i<k}s_{i}.
\end{cases}
\]

\end{singlespace}
\begin{thm}
\begin{singlespace}
\noindent Let $\mathfrak{g}=\mathfrak{g}_{\bar{0}}\oplus\mathfrak{g}_{\bar{1}}=\mathfrak{sl}(m|n)$
or $\mathfrak{osp}(m|2n)$ and $e\in\mathfrak{g}_{\bar{0}}$ be nilpotent.
Let $\varDelta_{0}$ be the 2-free core of $\varDelta$. Let $\mathfrak{g}_{0}$
be the subalgebra of $\mathfrak{g}$ generated by the root vectors
corresponding to the simple roots in $\varDelta_{0}$, then $\mathfrak{g}_{0}$
is a direct sum of simple Lie superalgebras. Let $G_{0}$ be the subgroup
of $G$ defined as above. There exists a nilpotent $G_{0}$-orbit
in $(\mathfrak{g}_{0})_{\bar{0}}$ having labelled Dynkin diagram
$\varDelta_{0}$. Suppose $e_{0}\in(\mathfrak{g}_{0})_{\bar{0}}$
is a representative of this orbit, then 

\noindent (1). $\dim\mathfrak{g}^{e}-\dim\mathfrak{g}_{0}^{e_{0}}=n_{2}(\varDelta)$
for all $\mathfrak{g}$; 

\noindent (2). When $\mathfrak{g}=\mathfrak{sl}(m|n),m\neq n$ or
$\mathfrak{osp}(m|2n)$, then $\dim\left(\mathfrak{z}(\mathfrak{g}^{e})\right)^{G^{e}}-\dim\left(\mathfrak{z}(\mathfrak{g}_{0}^{e_{0}})\right)^{G_{0}^{e_{0}}}=n_{2}(\varDelta)-\tau-\nu_{0}$;

\noindent (3). When $\mathfrak{g}=\mathfrak{sl}(n|n)$, then $\dim\left(\mathfrak{z}(\mathfrak{g}^{e})\right)^{G^{e}}-\dim\left(\mathfrak{z}(\mathfrak{g}_{0}^{e_{0}})\right)^{G_{0}^{e_{0}}}=n_{2}(\varDelta)+1-\tau-\nu_{0}$.
\end{singlespace}
\end{thm}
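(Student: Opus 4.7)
The plan is to derive all three parts of Theorem~3 simultaneously from an explicit realisation of $e_{0}$ inside $\mathfrak{g}$ together with the basis of $\mathfrak{z}(\mathfrak{g}^{e})$ and of $\left(\mathfrak{z}(\mathfrak{g}^{e})\right)^{G^{e}}$ to be constructed in the main body of the paper. The first step is to describe $\varDelta_{0}$ pyramidally: removing label-$2$ nodes from $\varDelta$ corresponds to breaking the pyramid $P$ into the disjoint union of its maximal sub-pyramids whose consecutive columns differ in index by at most $1$. These sub-pyramids are exactly the Dynkin (respectively ortho-symplectic Dynkin) pyramids of the simple summands of $\mathfrak{g}_{0}$, and $e_{0}$ is obtained by reading $e$ off this block-diagonal sub-pyramid in the usual way, so that its labelled Dynkin diagram is $\varDelta_{0}$ as required.

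For part~(1) I would use the combinatorial formula for $\dim\mathfrak{g}^{e}$ in terms of $P$ (following Wang--Zhao and Hoyt). Subtracting the analogous formula for $\dim\mathfrak{g}_{0}^{e_{0}}$ leaves contributions indexed by pairs of columns that are not grouped together in the $2$-free sub-pyramid decomposition. A direct combinatorial match puts these contributions in bijection with the label-$2$ nodes of $\varDelta$, giving $\dim\mathfrak{g}^{e}-\dim\mathfrak{g}_{0}^{e_{0}}=n_{2}(\varDelta)$ uniformly in both types.

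For parts~(2) and~(3) I would exploit the basis of $\left(\mathfrak{z}(\mathfrak{g}^{e})\right)^{G^{e}}$ to be given earlier, whose elements are indexed by column data of $P$. Each such basis vector is either supported entirely on a single sub-pyramid of $P_{0}$, in which case it is identified with a basis vector of $\left(\mathfrak{z}(\mathfrak{g}_{0}^{e_{0}})\right)^{G_{0}^{e_{0}}}$, or else it straddles several sub-pyramids, in which case it contributes to the difference on the left-hand side. The count of the latter family is $n_{2}(\varDelta)$, to which the corrections $-\tau$ and $-\nu_{0}$ must be applied: $\tau$ removes the outer columns with $r_{i}=s_{i}\neq 0$, on each of which the component group of $G^{e}$ acquires a parity-swap that kills a candidate invariant; $\nu_{0}$ removes one further invariant precisely when the inner even and odd sub-pyramids have equal total size, at which point the $\mathfrak{sl}$-trace forces the corresponding diagonal central element to vanish. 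The additional $+1$ in part~(3) is the extra central element peculiar to $\mathfrak{sl}(n|n)$ which survives in $\mathfrak{g}$ but is absent from $\mathfrak{g}_{0}$ unless $\mathfrak{g}_{0}$ itself contains an $\mathfrak{sl}(l|l)$ summand; a short case-by-case bookkeeping then recovers the uniform formula.

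The main obstacle will be the careful identification of the component group of $G^{e}$ and of its action on the explicit basis of $\mathfrak{z}(\mathfrak{g}^{e})$, on which both the $\tau$ and $\nu_{0}$ corrections ultimately depend. In particular, on an outer column with $r_{i}=s_{i}\neq 0$ the reductive quotient of $G^{e}$ acquires an extra involution coming from a parity-swap, and pinning down which specific element of $\mathfrak{z}(\mathfrak{g}^{e})$ fails to be fixed by this involution requires a direct and nontrivial calculation. In the $\mathfrak{osp}$ case the central column of the pyramid must be handled separately, since the $\mathrm{O}_{m}(\mathbb{C})\times\mathrm{Sp}_{2n}(\mathbb{C})$ factor of $G$ interacts with the central column in a way specific to the orthogonal or symplectic parity; and the $\mathfrak{sl}(n|n)$ case requires the trace condition to be propagated through the entire counting argument, which is the source of both the $\nu_{0}$ and the $+1$ corrections.
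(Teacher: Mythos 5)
Your reduction of part (1) to the column-counting formulas for $\dim\mathfrak{g}^{e}$ and $\dim\mathfrak{g}_{0}^{e_{0}}$, together with the pyramidal description of $\varDelta_{0}$ (the central columns forming one block and each outer column its own block), is essentially the paper's argument and works: each outer column contributes $\dim\mathfrak{gl}(c_{i})-\dim\mathfrak{sl}(r_{i}|s_{i})=1$ to the difference, and these columns are in bijection with the label-$2$ nodes of $\varDelta$.

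The gap is in your mechanism for the corrections $-\tau$ and $-\nu_{0}$ in parts (2) and (3). You attribute both to the component group of $G^{e}$ acting on $\mathfrak{z}(\mathfrak{g}^{e})$ and ``killing candidate invariants.'' That cannot be right: for $\mathfrak{g}=\mathfrak{sl}(m|n)$ one has $\left(\mathfrak{z}(\mathfrak{g}^{e})\right)^{G^{e}}=\mathfrak{z}(\mathfrak{g}^{e})$ (as the paper notes, the relevant centralizer is connected), so nothing in $\mathfrak{z}(\mathfrak{g}^{e})$ is killed, yet $\tau$ and $\nu_{0}$ are still nonzero in general. In the paper the corrections live entirely on the $\mathfrak{g}_{0}$ side: since $e_{0}$ projects to zero on each outer summand $\mathfrak{sl}(r_{i}|s_{i})$, that summand contributes its full centre to $\mathfrak{z}(\mathfrak{g}_{0}^{e_{0}})$, which is one-dimensional (spanned by the identity $I_{r_{i}|r_{i}}$, supertraceless precisely when $r_{i}=s_{i}\neq0$) --- this gives $\tau$ --- and the central summand contributes an extra identity exactly when it is of type $\mathfrak{sl}(l|l)$ --- this gives $\nu_{0}$, and explains why $\nu_{0}=0$ for $\mathfrak{osp}$, where the central summand is ortho-symplectic and has trivial centre. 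These extra elements of $\mathfrak{z}(\mathfrak{g}_{0}^{e_{0}})$ are $G_{0}^{e_{0}}$-invariant and are not restrictions of anything in $\mathfrak{z}(\mathfrak{g}^{e})$; they make the subtrahend larger rather than the minuend smaller. Your plan as stated would not detect them, and the ``main obstacle'' you identify (the component group of $G^{e}$ and its parity-swaps) is the wrong place to look: that analysis enters only in the earlier verification that $\left(\mathfrak{z}(\mathfrak{g}^{e})\right)^{G^{e}}=S$ for $\mathfrak{osp}$, which is an input to the theorem, not the source of $\tau$ or $\nu_{0}$.
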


\begin{singlespace}
This paper is organized as follows. In Section \ref{sec:Preliminaries},
we recall some fundamental concepts of Lie superalgebras such as basic
classical Lie superalgebras, root system and labelled Dynkin diagrams.
In Sections \ref{sec:gl(m,n)}--\ref{sec:osp}, we recall the concept
of Dynkin pyramid and ortho-symplectic Dynkin pyramid. We use this
to determine a nilpotent element $e\in\mathfrak{g}_{\bar{0}}$ and
the labelled Dynkin diagram with respect to $e$ explicitly, and to
calculate bases of the centralizers $\mathfrak{g}^{e}$ and centres
$\mathfrak{z}(\mathfrak{g}^{e})$. In Subsection \ref{subsec:Adjoint-action-of-osp},
we find a basis for $\left(\mathfrak{z}(\mathfrak{g}^{e})\right)^{G^{e}}$
where $\mathfrak{g}=\mathfrak{osp}(m|2n)$ and $G=\mathrm{O}_{m}(\mathbb{C})\times\mathrm{Sp}_{2n}(\mathbb{C})$. 
\end{singlespace}
\begin{singlespace}

\section{Preliminaries\label{sec:Preliminaries}}
\end{singlespace}

\subsection{Basic classical Lie superalgebras}

Let $\mathfrak{g}=\mathfrak{g}_{\bar{0}}\oplus\mathfrak{g}_{\bar{1}}$
be a finite-dimensional simple Lie superalgebra over $\mathbb{C}$.
Recall that $\mathfrak{g}$ is called a basic classical Lie superalgebra
if $\mathfrak{g}_{\bar{0}}$ is a reductive Lie algebra and $\mathfrak{g}$
has a non-degenerate even invariant supersymmetric bilinear form $(\cdotp,\cdotp)$.
Finite-dimensional complex simple Lie superalgebras were classified
by V. G. Kac in \cite[Theorem 5]{Kac1977}. The simple basic classical
Lie superalgebras that are not Lie algebras consist of classical types
which are infinite families and three exceptional types. The infinite
series are $\mathfrak{sl}(m|n)$, $\mathfrak{psl}(n|n)$ and $\mathfrak{osp}(m|2n)$.
In this paper, we consider Lie superalgebras $\mathfrak{sl}(m|n)$
and $\mathfrak{osp}(m|2n)$.

\begin{singlespace}
The following definitions can be found in \cite[Chapter 1]{Cheng2012}.
Let $\text{\ensuremath{\mathfrak{h}}}$ be a Cartan subalgebra of
$\mathfrak{g}$. There exists a root space decomposition $\mathfrak{g}=\mathfrak{h}\oplus\bigoplus_{\alpha\in\mathfrak{h}^{*}}\mathfrak{g_{\alpha}}$
where $\mathfrak{g_{\alpha}}:=\{x\in\mathfrak{g}:[h,x]=\alpha(h)x\ \text{for\ all}\ h\in\mathfrak{\mathfrak{h}}\}$
is the root space corresponding to $\alpha$ and we have $\mathfrak{h}=\mathfrak{g}_{0}$.
The set $\Phi=\{\alpha\in\mathfrak{h^{*}:\alpha\neq}0,\mathfrak{g}_{\alpha}\neq0\}$
is called the root system of $\mathfrak{g}$. The set of even (resp.
odd) roots is defined to be $\Phi_{\bar{0}}=:\{\alpha\in\Phi:\mathfrak{g}_{\alpha}\subseteq\mathfrak{g}_{\bar{0}}\}$
(resp. $\Phi_{\bar{1}}=:\{\alpha\in\Phi:\mathfrak{g}_{\alpha}\subseteq\mathfrak{g}_{\bar{1}}\}$).
A set of positive roots for a root system is a set $\Phi^{+}\subseteq\Phi$
such that all root $\alpha\in\Phi$ there is exactly one of $\alpha$,$-\alpha$
contained in $\Phi^{+}$; and for any two distinct roots $\alpha,\beta\in\Phi^{+}$,
$\alpha+\beta\in\Phi$ implies that $\alpha+\beta\in\Phi^{+}$. Note
that $\Phi^{+}=\Phi_{\bar{0}}^{+}\cup\Phi_{\bar{1}}^{+}$. The set
of simple roots $\varPi=\{\alpha_{1},...,\alpha_{l}\}\subseteq\Phi^{+}$
consists of the positive roots that cannot be written as sums of positive
roots. Note that $l$ does not depend on choice of $\varPi$ and we
call it the rank of $\mathfrak{g}$.

A Borel subalgebra $\mathfrak{b}$ of $\mathfrak{g}$ is defined to
be a solvable subalgebra such that $\mathfrak{b}=\mathfrak{h}\oplus\bigoplus_{\alpha\in\Phi^{+}}\mathfrak{g}_{\alpha}$.
Note that there are in general many inequivalent conjugacy classes
of Borel subalgebras and every Borel subalgebra containing $\mathfrak{h}$
determines a corresponding system of positive roots $\Phi^{+}$. Consequently
$\mathfrak{b}$ determines a system of simple roots $\varPi$. Recall
that the Weyl group $W$ of $\mathfrak{g}$ is generated by the Weyl
reflections $w_{\alpha}(\beta)=\beta-2\frac{(\alpha,\beta)}{(\alpha,\alpha)}\alpha$
with $\alpha\in\Phi_{\bar{0}},\beta\in\Phi$. For each conjugacy class
of Borel subalgebras of $\mathfrak{g}$, a simple root system can
be transformed into an equivalent one under the transformation of
the Weyl group $W$ of $\mathfrak{g}$, see \cite[Subsection 2.3]{Frappat1989}. 
\end{singlespace}

\subsection{Dynkin diagrams\label{subsec:Dynkin-diagrams}}

\begin{singlespace}
In this subsection, we continue working with $\mathfrak{g}$ as in
Subsection 2.1. We next recall the concept of the Dynkin diagram as
defined for example in \cite[Section 2.2]{Frappat1989}. 

We know that there exists a non-degenerate even invariant supersymmetric
bilinear form $(\cdotp,\cdotp)$ on $\mathfrak{g}$. One can check
that $(\cdotp,\cdotp)$ restricts to a non-degenerate symmetric bilinear
form on $\mathfrak{h}$. Therefore, there exists an isomorphism from
$\mathfrak{h}$ to $\mathfrak{h}^{*}$ which provides a symmetric
bilinear form on $\mathfrak{h}^{*}$. Then the \textit{Dynkin diagram}
of a Lie superalgebra $\mathfrak{g}$ with a simple root system $\varPi$
is a graph where the vertices are labelled by $\varPi$ and there
are $\mu_{\alpha\beta}$ lines between the vertices labelled by simple
roots $\alpha_{i}$ and $\alpha_{j}$ such that:
\begin{equation}
\mu_{\alpha\beta}=\begin{cases}
\ensuremath{\vert(\alpha_{i},\alpha_{j})\ensuremath{\vert}} & \text{if }(\alpha_{i},\alpha_{i})=(\alpha_{j},\alpha_{j})=0,\\
\frac{2\ensuremath{\vert}(\alpha_{i},\alpha_{j})\ensuremath{\vert}}{min\{\vert(\alpha_{i},\alpha_{i})\vert,\ensuremath{\vert}(\alpha_{j},\alpha_{j})\ensuremath{\vert}\}} & \text{if }(\alpha_{i},\alpha_{i})(\alpha_{j},\alpha_{j})\neq0,\\
\frac{2\ensuremath{\vert}(\alpha_{i},\alpha_{j})\ensuremath{\vert}}{min_{(\alpha_{k},\alpha_{k})\neq0}\ensuremath{\vert}(\alpha_{k},\alpha_{k})\ensuremath{\vert}} & \text{if }(\alpha_{i},\alpha_{i})\neq0,\ (\alpha_{j},\alpha_{j})=0\ \text{\text{and}\ }\alpha_{k}\in\Phi.
\end{cases}\label{eq:lines-=0003BC}
\end{equation}
 We say a root $\alpha\in\Phi$ is \textit{isotropic} if $(\alpha,\alpha)=0$
and is \textit{non-isotropic} if $(\alpha,\alpha)\neq0$. We associate
a white node $\ocircle$ to each even root, a grey node $\varotimes$
to each odd isotropic root and a black node $\newmoon$ to each odd
non-isotropic root. Moreover, when $\mu_{\alpha\beta}>1$, we put
an arrow pointing from the vertex labelled by $\alpha_{i}$ to the
vertex labelled by $\alpha_{j}$ if $(\alpha_{i},\alpha_{i})(\alpha_{j},\alpha_{j})\neq0$
and $(\alpha_{i},\alpha_{i})>(\alpha_{j},\alpha_{j})$ or if $(\alpha_{i},\alpha_{i})=0,(\alpha_{j},\alpha_{j})\neq0$
and $\vert(\alpha_{j},\alpha_{j})\vert<2$, or pointing from the vertex
labelled by $\alpha_{j}$ to the vertex labelled by $\alpha_{i}$
if $(\alpha_{i},\alpha_{i})=0,(\alpha_{j},\alpha_{j})\neq0$ and $\vert(\alpha_{j},\alpha_{j})\vert>2$.
If the value of $\mu_{\alpha\beta}$ is not a natural number, then
we label the edge between vertices corresponding to roots $\alpha$
and $\beta$ with $\mu_{\alpha\beta}$ instead of drawing multiple
lines between them. Note that the Dynkin diagram depends on $\varPi$
up to conjugacy by $W$, thus Dynkin diagrams of $\mathfrak{g}$ for
different choices of simple roots can be different.
\end{singlespace}
\begin{rem}
In this paper, the Dynkin diagram for $\mathfrak{sl}(n|n)$ is by
convention the one for $\mathfrak{gl}(n|n)$.
\end{rem}

\subsection{Labelled Dynkin diagrams\label{subsec:Lablled-Dynkin-diagrams}}

\noindent Let $e\in\mathfrak{g}_{\bar{0}}$ be nilpotent. There exists
an $\mathfrak{sl}(2)$-triple $\{e,h,f\}\subseteq\mathfrak{g}_{\bar{0}}$
by the Jacobson--Morozov Theorem, see for example \cite[Theorem 3.3.1]{Chllingwood1993}.
An $\mathfrak{sl}(2)$-triple determines a grading on $\mathfrak{g}$
according to the eigenvalues of ad$h,$ thus we can decompose $\mathfrak{g}$
into its ad$h$-eigenspaces $\mathfrak{g}=\bigoplus_{j\in\mathbb{Z}}\mathfrak{g}(j)$
where $\mathfrak{g}(j)=\{x\in\mathfrak{g}:[h,x]=jx\}$. In order to
construct the labelled Dynkin diagram of $e$, we first fix $\mathfrak{h}\subseteq\mathfrak{g}(0)$
to be a Cartan subalgebra of $\mathfrak{g}$ containing $h$. Then
we choose a system $\Phi^{+}(0)$ of positive roots for $\mathfrak{g}(0)$
to get a Borel subalgebra $\mathfrak{b}(0)$ of $\mathfrak{g}(0)$.
Let $\mathfrak{b}$ be the Borel subalgebra such that $\mathfrak{b}=\mathfrak{b}(0)\oplus\mathfrak{g}(j>0)$,
then we obtain the corresponding system of positive roots $\Phi^{+}$
and a system of simple roots $\varPi=\{\alpha_{1},...,\alpha_{l}\}$
which will give a Dynkin diagram of $\mathfrak{g}$. Furthermore,
for each $i=1,...,l$, note that $\mathfrak{g}_{\alpha_{i}}$ is the
root space corresponding to $\alpha_{i}$ and $\mathfrak{g}_{\alpha_{i}}\subseteq\mathfrak{g}(j_{i})$
for some $j_{i}\geq0$. Hence, we have $\alpha_{i}(h)\geq0$ for $i=1,...,l$. 
\begin{defn}
\begin{singlespace}
\noindent The \textit{labelled Dynkin diagram} $\varDelta$ of $e$
determined by $\varPi$ is given by taking the Dynkin diagram of $\mathfrak{g}$
and labelling each node $\alpha$ with $\alpha(h)$.
\end{singlespace}
\end{defn}

\section{Lie superalgebras $\mathfrak{sl}(m|n)$\label{sec:gl(m,n)}}

\subsection{Dynkin pyramids and labelled Dynkin diagram for $\mathfrak{sl}(m|n)$\textit{\label{subsec:Dynkin-pyramid-A(m,n)}}}

\begin{singlespace}
\noindent Let $V=V_{\bar{0}}\oplus V_{\bar{1}}$ be a finite-dimensional
vector superspace such that $\dim V_{\bar{0}}=m$ and $\dim V_{1}=n$.
Let $\mathfrak{g}=\mathfrak{g}_{\bar{0}}\oplus\mathfrak{g}_{\bar{1}}=\mathfrak{sl}(m|n)=\mathfrak{sl}(V)$.
Note that the nilpotent orbits in $\mathfrak{g}_{\bar{0}}$ are parametrized
by the partitions of $(m|n)$. Let $\lambda$ be a partition of $(m|n)$
such that 
\begin{equation}
\lambda=(p|q)=(p_{1},\dots,p_{r}|q_{1},\dots,q_{s})\label{eq:gl(m,n)-partition 0}
\end{equation}
where $p$ (resp. $q$) is a partition of $m$ (resp. $n$) and $p_{1}\geq\dots\geq p_{r},q_{1}\geq\dots\geq q_{s}$.
By rearranging the order of numbers in $(p|q)$, we write 
\begin{equation}
\lambda=(\lambda_{1},\dots,\lambda_{r+s})\label{eq:gl(m,n)-partition 1}
\end{equation}
 such that $\lambda_{1}\geq\dots\geq\lambda_{r+s}$. For $i=1,\dots,r+s$,
we define $\ensuremath{\left|i\right|}$ such that for $c\in\mathbb{Z}$,
we have that $\ensuremath{\left|\{i:\lambda_{i}=c,\ensuremath{\left|i\right|}=\bar{0}\}\right|}=\ensuremath{\left|\{j:p_{j}=c\}\right|}$,
$\left|\{i:\lambda_{i}=c,\ensuremath{\left|i\right|}=\bar{1}\}\right|=\ensuremath{\left|\{j:q_{j}=c\}\right|}$
and if $\lambda_{i}=\lambda_{j}$, $\ensuremath{\left|i\right|}=\bar{0},\ensuremath{\left|j\right|}=\bar{1}$,
then $i<j$. i.e. $\sum_{\ensuremath{\left|i\right|}=\bar{0}}\lambda_{i}=m$
and $\sum_{\ensuremath{\left|i\right|}=\bar{1}}\lambda_{i}=n$.
\end{singlespace}

\begin{singlespace}
For the purpose of describing the dimension of $\mathfrak{g}^{e}$
and the labelled Dynkin diagram for each nilpotent orbit $e\in\mathfrak{g}_{\bar{0}}$,
we recall that a Dynkin pyramid $P$ of shape $\lambda$ is defined
to be a finite collection of boxes of size $2\times2$ in the $xy$-plane
which are centred at integer coordinates, see \cite[Section 4]{Elashvili2005}
and \cite[Section 7]{Hoyt2012}. Write $\lambda=(\lambda_{1},\dots,\lambda_{r+s})$
as defined in (\ref{eq:gl(m,n)-partition 1}), we number rows of the
Dynkin pyramid $P$ from $1,\dots,r+s$ from bottom to top, then the
$j$th row of $P$ has length $\lambda_{j}$ and we mark boxes in
the row $j$ with parity $\bar{0}$ or $\bar{1}$ according to $\vert j\vert$.
We associate a numbering $1,2,...,m+n$ for the boxes of the Dynkin
pyramid from top to bottom and from left to right. The row number
of the $i$th box is denoted by $\mathrm{row}(i)$ and the parity
of $\mathrm{row}(i)$ is denoted by $\ensuremath{\left|\mathrm{row}(i)\right|}$.
We say that the column number $\mathrm{col}(i)$ of $i$ is the $x$-coordinate
of the centre of the $i$th box. 

Define a basis $\{v_{1},\dots,v_{m+n}\}$ of $V=V_{\bar{0}}\oplus V_{\bar{1}}$
where $v_{i}\in V_{\ensuremath{\left|\mathrm{row}(i)\right|}}$. According
to \cite[Section 7]{Hoyt2012}, $P$ determines a nilpotent element
$e\in\mathfrak{g}_{\bar{0}}$ such that $e$ sends $v_{i}$ to $v_{j}$
if the box labelled by $j$ is the left neighbour of $i$ and sends
$v_{i}$ to zero if the box labelled by $v_{i}$ has no left neighbour.
Write $e_{ij}$ for the $ij$-matrix unit, then 
\[
e=\sum_{\mathrm{row}(i)=\mathrm{row}(j),\mathrm{col}(j)=\mathrm{col}(i)-2}e_{ij}
\]
 for all $1\leq i,j\leq m+n$. We know that $P$ also determines a
semisimple element $h\in\mathfrak{g}_{\bar{0}}$ such that $h=\sum_{i=1}^{m+n}-\text{col}(i)e_{ii}$.
Note that $\{e,h\}$ can be extended to an $\mathfrak{sl}(2)$-triple
$\{e,h,f\}$ in $\mathfrak{g}_{\bar{0}}$ according to \cite[Section 7]{Hoyt2012}.

Next we recall a construction of the root system for $\mathfrak{g}$
according to \cite[Section 2.2]{Musson2012}. Let $\mathfrak{h}$
be the Cartan subalgebra of $\mathfrak{g}$ consisting of all diagonal
matrices in $\mathfrak{g}$ and let $a=\mathrm{diag}(a_{1},\dots,a_{m+n})\in\mathfrak{h}$.
For the basis $\{v_{1},\dots,v_{m+n}\}$ of $V$, we define $\{\varepsilon_{1},\dots,\varepsilon_{m+n}\}\subseteq\mathfrak{h}^{*}$
such that $\varepsilon_{i}(a)=a_{i}$ for $i=1,\dots,m+n$ and the
parity of $\varepsilon_{i}$ is equal to $\ensuremath{\left|\mathrm{row}(i)\right|}$.
Then the root system of $\mathfrak{g}$ with respect to $\mathfrak{h}$
is $\Phi=\Phi_{\bar{0}}\cup\Phi_{\bar{1}}$ where $\Phi_{\bar{0}}=\{\varepsilon_{i}-\varepsilon_{j}:i\neq j,|i|=|j|\}$
and $\Phi_{\bar{1}}=\{\varepsilon_{i}-\varepsilon_{j}:i\neq j,|i|\neq|j|\}$
and $(\varepsilon_{i},\varepsilon_{j})=(-1)^{\ensuremath{|i|}}\delta_{ij}$.
By computing $(\varepsilon_{i}-\varepsilon_{j},\varepsilon_{i}-\varepsilon_{j})$
for all $i\neq j,|i|\neq|j|$, we have that all odd roots are isotropic.

The labelled Dynkin diagram for a nilpotent orbit $e\in\mathfrak{g}_{\bar{0}}$
is constructed as follows. First, draw the Dynkin pyramid of shape
$\lambda$. For $i=1,...,m+n-1,$ we associate a white node $\ocircle$
(resp. a grey node $\varotimes$) for root $\alpha_{i}=\varepsilon_{i}-\varepsilon_{i+1}$
if $\ensuremath{\left|\mathrm{row}(i+1)\right|}=\ensuremath{\left|\mathrm{row}(i)\right|}$
(resp. $\ensuremath{\left|\mathrm{row}(i+1)\right|}\neq\ensuremath{\left|\mathrm{row}(i)\right|}$).
We label the $i$th node with the value $\mathrm{col}(i+1)-\mathrm{col}(i)$. 
\end{singlespace}
\begin{example}
\begin{singlespace}
\noindent For a partition $\lambda=(5,1|3)$, the Dynkin pyramid of
shape $\lambda$ is

\noindent \ \ \ \ \ \ \ \ \ \ \ \ \ \ \ \ \ \ \ \ \ \ \ \ \ \ \ \ \ \ \ \ \ \ \ \ \ \ \ \ \ \ \ \includegraphics[scale=0.9]{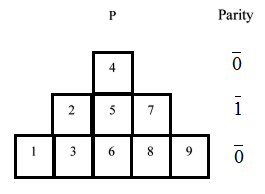}

\noindent We calculate that $h=\text{diag}(4,2,2,0,0,0,-2,-2,-4)$.
Then the corresponding labelled Dynkin diagram is 

\noindent \ \ \ \ \ \ \ \ \ \ \ \ \ \ \ \ \ \ \ \ \ \ \ \ \ \ \ \ \ \ \ \includegraphics{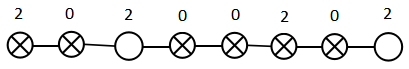}
\end{singlespace}
\end{example}

\begin{rem}
\begin{singlespace}
\noindent Different numberings within columns and different choices
of parities of rows are possible and would lead to different labelled
Dynkin diagrams. In this way one can get all possible labelled Dynkin
diagrams. In this paper, the way we define the partition $\lambda$
in (\ref{eq:gl(m,n)-partition 1}) leads to a unique labelled Dynkin
diagram. 
\end{singlespace}
\end{rem}

\begin{singlespace}
The following example shows that for different choices of parities
of rows of a given pyramid, we can get different Dynkin diagrams.
\end{singlespace}
\begin{example}
\begin{singlespace}
\noindent \label{exa:(3,2,2,1)}For a partition $\lambda=(3,2|2,1)$,
the corresponding Dynkin pyramid and labelled Dynkin diagram are shown
below:

\noindent \ \ \ \ \ \ \ \ \ \ \ \ \ \ \ \ \ \ \ \ \ \ \ \ \ \ \ \ \ \ \ \ \ \ \ \ \ \ \ \ \ \ \ \ \ \ \ \ \ \ \ \ \ \ \ \ \includegraphics[scale=0.9]{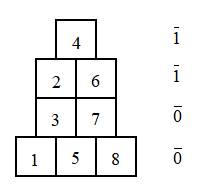}

\noindent \ \ \ \ \ \ \ \ \ \ \ \ \ \ \ \ \ \ \ \ \ \ \ \ \ \ \ \ \includegraphics{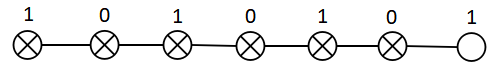}
\end{singlespace}

\noindent However, if we allow different parities for rows, then there
exist another Dynkin pyramid and therefore a different labelled Dynkin
diagram:

\noindent \ \ \ \ \ \ \ \ \ \ \ \ \ \ \ \ \ \ \ \ \ \ \ \ \ \ \ \ \ \ \ \ \ \ \ \ \ \ \ \ \ \ \ \ \ \ \ \ \ \ \ \ \ \ \ \ \includegraphics[scale=0.9]{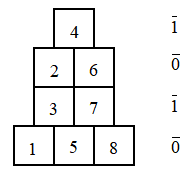}
\end{example}

\begin{singlespace}
\noindent \ \ \ \ \ \ \ \ \ \ \ \ \ \ \ \ \ \ \ \ \ \ \ \ \ \ \ \ \includegraphics{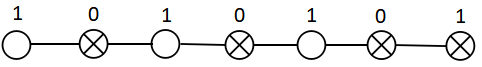}
\end{singlespace}

\subsection{Centralizer of nilpotent elements $e\in\mathfrak{g}_{\bar{0}}$\label{subsec:c-gl(m,n)}}

\begin{singlespace}
\noindent Let $e\in\mathfrak{g}_{\bar{0}}$ be a nilpotent element
with Jordan type $\lambda=(\lambda_{1},\dots,\lambda_{r+s})$ which
is defined as in (\ref{eq:gl(m,n)-partition 1}). In order to calculate
the dimension of $\mathfrak{\mathfrak{g}}^{e}$, we first recall a
basis for $\mathfrak{gl}(m|n)^{e}$ based on \cite[Section 1]{Yakimova2009}
and \cite[Section 3.2]{Hoyt2012}.
\end{singlespace}

\begin{singlespace}
Let $\mathfrak{\bar{g}}=\bar{\mathfrak{g}}_{\bar{0}}\oplus\bar{\mathfrak{g}}_{\bar{1}}=\mathfrak{gl}(m|n)$.
Let $u_{1},\dots,u_{r+s}\in V$ such that $u_{i}=v_{k}$ for $\mathrm{\mathrm{row}}(k)=i$
and $\mathrm{col}(k)=\lambda_{k}$, then the vectors $e^{j}u_{i}$
with $0\leq j\leq\lambda_{i}-1,\ \ensuremath{\left|i\right|}=\bar{0}$
form a basis for $V_{\bar{0}}$ and the vectors $e^{j}u_{i}$ with
$0\leq j\leq\lambda_{i}-1,\ \ensuremath{\left|i\right|}=\bar{1}$
form a basis for $V_{\bar{1}}$. Note that $e^{\lambda_{i}}u_{i}=0$
for $1\leq i\leq r+s$. 

With the above notation, Hoyt worked out a basis of $\mathfrak{\bar{g}}^{e}$
in \cite[Section 3.2.1]{Hoyt2012} which we recall below. For $\xi\in\mathfrak{\bar{g}}^{e}$,
we have $\xi(e^{j}u_{i})=e^{j}\xi(u_{i})$. Hence, each $\xi$ is
determined by $\xi(u_{i})$ for $i=1,\dots,r+s$ and we can write
\begin{equation}
\xi(u_{i})=\sum_{j=1}^{r+s}\sum_{k=\max\{\lambda_{j}-\lambda_{i},0\}}^{\lambda_{j}-1}c_{i}^{j,k}e^{k}u_{j}\label{eq:=00FF08i,j,k=00FF09}
\end{equation}
where $c_{i}^{j,k}\in\mathbb{C}$ are coefficients. Then $\mathfrak{\bar{g}}^{e}$
has a basis 
\begin{equation}
\{\xi_{i}^{j,k}:1\leq i,j\leq r+s\text{ and }\max\{\lambda_{j}-\lambda_{i},0\}\leq k\leq\lambda_{j}-1\}\label{eq:g^e(i,j,k)}
\end{equation}
 such that $\xi_{i}^{j,k}(u_{t})=\delta_{it}e^{k}u_{j}$.

Instead of using the formula for $\dim\mathfrak{\bar{g}}^{e}$ in
\cite[Section 3.2.1]{Hoyt2012}, we obtain an alternative formula
for $\dim\mathfrak{\bar{g}}^{e}$ below. Note that formulas in \cite[Section 3.2.1]{Hoyt2012}
and Proposition \ref{prop:dim gl(m,n)^e} are equivalent, but the
formula in Proposition \ref{prop:dim gl(m,n)^e} is more convenient
to use in Subsection \ref{subsec:Explanation-of-Theorems-A(m,n)}.
\end{singlespace}
\begin{prop}
\begin{singlespace}
\noindent \textup{\label{prop:dim gl(m,n)^e}}Let $\lambda$ be a
partition of $(m|n)$ denoted as in (\ref{eq:gl(m,n)-partition 1}).
Denote by $P$ the Dynkin pyramid of $\lambda$ and $e\in\mathfrak{g}_{\bar{0}}$
be a nilpotent element determined by $P$. Let $c_{i}$ be the number
of boxes in the $i$th column of $P$. Then \textup{$\dim\mathfrak{\bar{g}}^{e}=\sum_{i\in\mathbb{Z}}c_{i}^{2}+\sum_{i\in\mathbb{Z}}c_{i}c_{i+1}$.}
\end{singlespace}
\end{prop}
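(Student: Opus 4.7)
The plan is to count the basis of $\mathfrak{\bar{g}}^{e}$ displayed in (\ref{eq:g^e(i,j,k)}) and then re-express that count combinatorially in terms of the columns of the Dynkin pyramid $P$. For each ordered pair $(i,j)$ with $1 \leq i, j \leq r+s$, the allowed values of $k$ in (\ref{eq:g^e(i,j,k)}) form the interval $[\max\{0, \lambda_j - \lambda_i\}, \lambda_j - 1]$, which has exactly $\lambda_j - \max\{0, \lambda_j - \lambda_i\} = \min\{\lambda_i, \lambda_j\}$ elements. Hence
\[
\dim \mathfrak{\bar{g}}^{e} = \sum_{i,j=1}^{r+s} \min\{\lambda_i, \lambda_j\}.
\]

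Next I would observe that $\sum_{i \in \mathbb{Z}} c_i^{2} + \sum_{i \in \mathbb{Z}} c_i c_{i+1}$ is, by definition, the number of ordered pairs of boxes $(B, B')$ of $P$ satisfying $\mathrm{col}(B') - \mathrm{col}(B) \in \{0, 1\}$: the term $c_i^{2}$ counts pairs both lying in column $i$, and $c_i c_{i+1}$ counts pairs with $\mathrm{col}(B) = i$ and $\mathrm{col}(B') = i+1$. To match this with the dimension formula above, I would then establish the following row-by-row identity: for any two rows $i, j$ of $P$, the number of such pairs with $B$ in row $i$ and $B'$ in row $j$ equals $\min\{\lambda_i, \lambda_j\}$. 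In the centered pyramid $P$, row $i$ occupies the columns $\{-\lambda_i + 1, -\lambda_i + 3, \ldots, \lambda_i - 1\}$, all of the same parity as $\lambda_i - 1$. If $\lambda_i \equiv \lambda_j \pmod 2$, then the difference $\mathrm{col}(B') - \mathrm{col}(B)$ is even, and the constraint forces it to be $0$; since both rows are centered with columns of the same parity, their column sets intersect in the central $\min\{\lambda_i, \lambda_j\}$ entries. If $\lambda_i \not\equiv \lambda_j \pmod 2$, the difference is odd and must equal $1$, and a direct enumeration of the pairs $(x, x+1)$ with $x$ a column of row $i$ and $x+1$ a column of row $j$ again yields $\min\{\lambda_i, \lambda_j\}$.

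Summing the row-pair identity over all $(i, j)$ gives
\[
\sum_{i,j} \min\{\lambda_i, \lambda_j\} = \sum_{i \in \mathbb{Z}} c_i^{2} + \sum_{i \in \mathbb{Z}} c_i c_{i+1},
\]
which combined with the basis count above proves the proposition. The main bookkeeping step is the mixed-parity sub-case of the row-pair identity, which reduces to checking that two centered integer intervals of opposite parity, shifted by one, overlap in exactly $\min\{\lambda_i, \lambda_j\}$ lattice points; this becomes routine after comparing the endpoints of $[-\lambda_i + 1, \lambda_i - 1]$ and $[-\lambda_j, \lambda_j - 2]$.
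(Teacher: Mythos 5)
Your proposal is correct, but it proves the proposition by a genuinely different route from the paper. The paper never counts the basis (\ref{eq:g^e(i,j,k)}): it invokes the good-grading property from \cite{Hoyt2012}, namely that $\mathrm{ad}\,e$ maps $\bar{\mathfrak{g}}(j\geq-1)$ onto $\bar{\mathfrak{g}}(j\geq1)$ with kernel $\bar{\mathfrak{g}}^{e}$, which immediately gives $\dim\bar{\mathfrak{g}}^{e}=\dim\bar{\mathfrak{g}}(0)+\dim\bar{\mathfrak{g}}(-1)$; it then reads off $\bar{\mathfrak{g}}(0)\cong\bigoplus_{i}\mathrm{End}(\mathbb{C}^{c_{i}})$ and $\bar{\mathfrak{g}}(-1)\cong\bigoplus_{i}\mathrm{Hom}(\mathbb{C}^{c_{i}},\mathbb{C}^{c_{i+1}})$ from the computation $[h,e_{kl}]=(\mathrm{col}(l)-\mathrm{col}(k))e_{kl}$. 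You instead count the Yakimova--Hoyt basis to get $\dim\bar{\mathfrak{g}}^{e}=\sum_{i,j}\min\{\lambda_{i},\lambda_{j}\}$ and then verify the combinatorial identity $\sum_{i,j}\min\{\lambda_{i},\lambda_{j}\}=\sum_{i}c_{i}^{2}+\sum_{i}c_{i}c_{i+1}$ row pair by row pair; I checked your interval bookkeeping (including the mixed-parity case, where the shifted interval $[-\lambda_{j},\lambda_{j}-2]$ meets $[1-\lambda_{i},\lambda_{i}-1]$ in exactly $\min\{\lambda_{i},\lambda_{j}\}$ points of the common parity) and it is right. The trade-off is clear: the paper's argument is shorter because the hard content is outsourced to the surjectivity of $\mathrm{ad}\,e$ on $\bar{\mathfrak{g}}(j\geq-1)$, a structural fact about Dynkin gradings, whereas yours is more elementary and self-contained given the explicit basis, at the cost of the lattice-point enumeration. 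In effect your combinatorial identity is precisely the statement that the basis count agrees with $\dim\bar{\mathfrak{g}}(0)+\dim\bar{\mathfrak{g}}(-1)$, so the two proofs are complementary.
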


\begin{singlespace}
\noindent \begin{proof} Let $\mathfrak{\bar{g}}=\bigoplus_{j\in\mathbb{Z}}\mathfrak{\bar{g}}(j)$
as defined in Subsection \ref{subsec:Lablled-Dynkin-diagrams}. According
to \cite[Definition 4.1 and Theorem 7.2]{Hoyt2012}, we have that
the map $\text{ad}e:\mathfrak{\bar{g}}(j\geq-1)\rightarrow\mathfrak{\bar{g}}(j\geq1)$
is surjective and $\ker(\text{ad}e)=\mathfrak{\bar{g}}^{e}\subseteq\mathfrak{\bar{g}}(j\geq-1)$,
$\dim\ker(\text{ad}e)+\dim\text{im}(\text{ad}e)=\dim\mathfrak{\bar{g}}(j\geq-1)$,
so we have that $\dim\mathfrak{\bar{g}}^{e}=\dim\mathfrak{\bar{g}}(j\geq-1)-\dim\mathfrak{\bar{g}}(j\geq1)=\dim\mathfrak{\bar{g}}(0)+\dim\mathfrak{\bar{g}}(-1)$.
We also calculate that $[h,e_{kl}]=\left(\mathrm{col}(l)-\mathrm{col}(k)\right)e_{kl}$,
which means $e_{kl}\in\mathfrak{\bar{g}}(j)$ if $j=\mathrm{col}(l)-\mathrm{col}(k)$.
This implies that $\mathfrak{\bar{g}}(0)=\langle e_{kl}:\mathrm{col}(l)=\mathrm{col}(k)\rangle\cong\bigoplus_{i\in\mathbb{Z}}\mathrm{End}(\mathbb{C}^{c_{i}})$
and $\mathfrak{\bar{g}}(-1)=\langle e_{kl}:\mathrm{col}(l)-\mathrm{col}(k)=-1\rangle=\bigoplus\mathrm{Hom}(\mathbb{C}^{c_{i}},\mathbb{C}^{c_{i+1}})$.
Therefore, we obtain that $\dim\mathfrak{\bar{g}}^{e}=\sum_{i\in\mathbb{Z}}c_{i}^{2}+\sum_{i\in\mathbb{Z}}c_{i}c_{i+1}$.\end{proof}
\end{singlespace}

\begin{singlespace}
We also use an alternative notation for $\lambda$ such that 
\begin{equation}
\lambda=(c^{m_{c}+n_{c}},\dots,1^{m_{1}+n_{1}})\label{eq:gl(m,n)-partition 2}
\end{equation}
 where $m_{i}=\ensuremath{\left|\{j:\lambda_{j}=i,\ensuremath{\left|j\right|}=\bar{0}\}\right|}$
and $n_{i}=\ensuremath{\left|\{j:\lambda_{j}=i,\ensuremath{\left|j\right|}=\bar{1}\}\right|}$.
We define $M^{j}=\langle u_{i}:\lambda_{i}=j\rangle$ for $1\leq j\leq c$.
By \cite[Theorem 3.4]{Hoyt2012}, we have that $\mathfrak{\bar{g}}^{e}(0)\cong\mathfrak{gl}(M^{1})\oplus\mathfrak{gl}(M^{2})\oplus\dots\oplus\mathfrak{gl}(M^{c})$.
In addition, we have $M^{j}=M_{\bar{0}}^{j}\oplus M_{\bar{1}}^{j}$
where $\dim M_{\bar{0}}^{j}=m_{j}$, $\dim M_{\bar{1}}^{j}=n_{j}$
and thus $\mathfrak{gl}(M^{j})\cong\mathfrak{gl}(m_{j}|n_{j})$ for
each $j$. Therefore, we obtain that $\mathfrak{\bar{g}}^{e}(0)\cong\bigoplus_{j=1}^{c}\mathfrak{gl}(m_{j}|n_{j})$.

Next we move on to calculate $\dim\mathfrak{g}^{e}$. 
\end{singlespace}
\begin{thm}
\begin{singlespace}
\noindent \label{thm:I(M,0)}Let $e\in\mathfrak{g}_{\bar{0}}$ be
nilpotent, we have that $\dim\mathfrak{g}^{e}=\dim\mathfrak{\bar{g}}^{e}-1$.
\end{singlespace}
\end{thm}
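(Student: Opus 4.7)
The plan is to exploit the fact that $\mathfrak{g} = \mathfrak{sl}(m|n)$ is realised inside $\bar{\mathfrak{g}} = \mathfrak{gl}(m|n)$ as the codimension-one subspace $\ker(\mathrm{str})$, where $\mathrm{str}$ denotes the supertrace. Since the supertrace is $(\mathrm{ad}\,\bar{\mathfrak{g}})$-invariant and in particular annihilates brackets, $\mathfrak{g}$ is an ideal (even $(\bar{\mathfrak{g}},\bar{\mathfrak{g}}]$ lies inside $\mathfrak{g}$), and one immediately has $\mathfrak{g}^{e} = \mathfrak{g}\cap\bar{\mathfrak{g}}^{e} = \ker\bigl(\mathrm{str}\!\mid_{\bar{\mathfrak{g}}^{e}}\bigr)$. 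Thus the whole claim reduces to showing that the linear functional $\mathrm{str}\colon \bar{\mathfrak{g}}^{e}\to\mathbb{C}$ is \emph{surjective}, for then rank--nullity gives $\dim\mathfrak{g}^{e}=\dim\bar{\mathfrak{g}}^{e}-1$.

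To exhibit an element of $\bar{\mathfrak{g}}^{e}$ on which the supertrace does not vanish, I would use the explicit basis $\{\xi_{i}^{j,k}\}$ recalled in (\ref{eq:g^e(i,j,k)}). In particular, I would single out the ``block identity'' elements $\xi_{i}^{i,0}$, characterised by $\xi_{i}^{i,0}(u_{t})=\delta_{it}u_{i}$ and hence (by $e$-equivariance) $\xi_{i}^{i,0}(e^{k}u_{t})=\delta_{it}e^{k}u_{i}$. Since this endomorphism acts as the identity on the $\lambda_{i}$-dimensional subspace $\langle u_{i},eu_{i},\dots,e^{\lambda_{i}-1}u_{i}\rangle\subseteq V_{|i|}$ and as zero elsewhere, a direct computation yields
\[
\mathrm{str}\bigl(\xi_{i}^{i,0}\bigr)=(-1)^{|i|}\lambda_{i}.
\]
Because every $\lambda_{i}\ge 1$, each $\xi_{i}^{i,0}$ has nonzero supertrace, so $\mathrm{str}\!\mid_{\bar{\mathfrak{g}}^{e}}$ is surjective, and the theorem follows.

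The argument is uniform in $m$ and $n$, but it is worth noting how it handles the two separate cases. When $m\neq n$ one can take an even shorter route: the identity matrix $I_{m+n}$ lies in $\bar{\mathfrak{g}}^{e}$ (it centralises everything) and $\mathrm{str}(I_{m+n})=m-n\neq 0$, giving surjectivity immediately. The only potentially delicate case is $m=n$, where $\mathrm{str}(I_{m+n})=0$ and $I_{m+n}$ actually lies in $\mathfrak{g}^{e}$ itself (this is the origin of the one-dimensional centre of $\mathfrak{sl}(n|n)$ that disappears on passing to $\mathfrak{psl}(n|n)$); here the basis vectors $\xi_{i}^{i,0}$ above still do the job since at least one of them has nonvanishing supertrace. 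Consequently I do not anticipate any real obstacle: the only point one has to be careful about is picking an element of $\bar{\mathfrak{g}}^{e}$ whose supertrace is nonzero in the diagonal case $m=n$, and the diagonal basis vectors from Hoyt's description supply one for free.
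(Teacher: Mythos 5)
Your proposal is correct and follows essentially the same route as the paper: identify $\mathfrak{g}^{e}$ with the kernel of the supertrace restricted to $\bar{\mathfrak{g}}^{e}$ and then exhibit an element of $\bar{\mathfrak{g}}^{e}$ with nonzero supertrace, so that rank--nullity gives the codimension-one statement. The only (immaterial) difference is the choice of witness: the paper uses the block identity $I(m|0)$ with $\mathrm{str}(I(m|0))=m$, whereas you use a single Jordan-block projector $\xi_{i}^{i,0}$ with $\mathrm{str}(\xi_{i}^{i,0})=(-1)^{|i|}\lambda_{i}$.
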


\begin{proof}We first look at a Lie superalgebra homomorphism
$\text{str}:\mathfrak{\bar{g}}\rightarrow\mathbb{C}.$ Note that $\ker(\text{str})=\mathfrak{g}$
and $\dim\text{im}(\text{str})=1$. By restricting we get $\text{str}^{e}:\mathfrak{\bar{g}}^{e}\rightarrow\mathbb{C}$
and $\ker(\text{str}^{e})=\mathfrak{g}^{e}$. Then by the rank-nullity
theorem, we get $\dim\ker(\text{str}^{e})+\dim\text{im}(\text{str}^{e})=\dim\mathfrak{\bar{g}}^{e}$. 

\noindent Let
\[
e=\begin{pmatrix}e_{1} & 0\\
0 & e_{2}
\end{pmatrix}\in\mathfrak{g}_{\bar{0}}\subseteq\mathfrak{\bar{g}}_{\bar{0}}
\]
 where $e_{1}$ and $e_{2}$ are $m\times m$ and $n\times n$ matrices
respectively.

\noindent Let $I_{m}\in\mathfrak{gl}(m)$ be the $m\times m$ identity
matrix and $0_{n\times n}\in\mathfrak{gl}(n)$ be the $n\times n$
zero matrix. Let $I(m|0)$ be the $(m+n)\times(m+n)$ matrix 
\begin{equation}
I(m|0)=\begin{pmatrix}I_{m} & 0_{m\times n}\\
0_{n\times m} & 0_{n}
\end{pmatrix}.\label{eq:I(m,0)}
\end{equation}
Then we calculate $[I(m|0),e]=0$. Hence, we have $I(m|0)\in\mathfrak{\bar{g}}^{e}$.

\noindent Therefore, we have that $\mathrm{str}^{e}(I(m|0))=\mathrm{trace}(I_{m})=m$,
thus $\mathrm{str}^{e}$ is non-zero and $\dim\text{im}(\text{str}^{e})=1$.
Therefore, we have $\dim\mathfrak{g}^{e}=\dim\mathfrak{\bar{g}}^{e}-1$.\end{proof}

\subsection{Centre of centralizer of nilpotent elements $e\in\mathfrak{g}_{\bar{0}}$\label{subsec:cc-gl(m,n)}}

\begin{singlespace}
\noindent In order to give a basis for $\mathfrak{z}(\mathfrak{g}^{e})$,
we determine a basis for $\mathfrak{z}(\mathfrak{\bar{g}}^{e})$ first.
\end{singlespace}
\begin{prop}
\begin{singlespace}
\noindent \textup{\label{prop:I(m,0)}}Let $e\in\mathfrak{\bar{g}}_{\bar{0}}$
be nilpotent, we have that $\mathfrak{z}(\mathfrak{\bar{g}}^{e})\subseteq\mathfrak{\bar{g}}_{\bar{0}}$.
\end{singlespace}
\end{prop}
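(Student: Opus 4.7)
The plan is to reduce the statement to showing that the odd part of $\mathfrak{z}(\mathfrak{\bar g}^{e})$ vanishes, and then to exploit the single even element $I(m|0)$ already produced in the proof of Theorem \ref{thm:I(M,0)} to force any odd central element to be zero.

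First I would observe that $\mathfrak{\bar g}^{e}$ is a sub-superalgebra of $\mathfrak{\bar g}$, so $\mathfrak{z}(\mathfrak{\bar g}^{e})$ inherits a $\mathbb{Z}/2$-grading and splits as $\mathfrak{z}(\mathfrak{\bar g}^{e})_{\bar{0}}\oplus\mathfrak{z}(\mathfrak{\bar g}^{e})_{\bar{1}}$. It therefore suffices to prove that $\mathfrak{z}(\mathfrak{\bar g}^{e})_{\bar{1}}=0$, i.e.\ that every homogeneous odd element which super-commutes with all of $\mathfrak{\bar g}^{e}$ must vanish.

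Next I would recall from the proof of Theorem \ref{thm:I(M,0)} that the element $I(m|0)$ defined in (\ref{eq:I(m,0)}) lies in $\mathfrak{\bar g}^{e}$. Since $I(m|0)$ is even, for any odd $x\in\mathfrak{\bar g}$ the super-bracket $[x,I(m|0)]$ coincides with the ordinary matrix commutator $xI(m|0)-I(m|0)x$. Writing such an $x$ in block form
\[
x=\begin{pmatrix}0 & A\\ B & 0\end{pmatrix},
\]
a one-line block-matrix computation shows that this commutator has off-diagonal blocks $-A$ and $B$, so the condition $[x,I(m|0)]=0$ forces $A=B=0$ and hence $x=0$. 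Applying this to any $x\in\mathfrak{z}(\mathfrak{\bar g}^{e})_{\bar{1}}$ completes the argument.

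I do not foresee any real obstacle: the whole proof amounts to one short block-matrix computation once $I(m|0)$ is in hand. The only conceptual point that needs noting is the standard fact that the centre of a sub-superalgebra is $\mathbb{Z}/2$-graded, which is what legitimises testing against homogeneous odd elements separately.
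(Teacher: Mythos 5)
Your proposal is correct and follows essentially the same route as the paper: both arguments hinge on the observation that $I(m|0)\in\mathfrak{\bar g}^{e}$ and on the block-matrix computation showing that commuting with $I(m|0)$ kills the off-diagonal blocks. The only cosmetic difference is that you first reduce to the odd homogeneous component via the grading of the centre, whereas the paper applies the same computation directly to an arbitrary $x=\begin{pmatrix}A & B\\ C & D\end{pmatrix}$ and concludes $B=C=0$.
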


\begin{singlespace}
\noindent \begin{proof}Suppose $x\in\mathfrak{z}(\mathfrak{\bar{g}}^{e})$,
then $x\in\mathfrak{\bar{g}}^{e}$ and $[x,y]=0$ for all $y\in\mathfrak{\bar{g}}^{e}$.
Since $I(m|0)\in\mathfrak{\bar{g}}^{e}$ where $I(m\vert0)$ is defined
in (\ref{eq:I(m,0)}), we have that $[x,I(m\vert0)]=0$ . Therefore,
we have that $\mathfrak{z}(\mathfrak{\bar{g}}^{e})\subseteq(\mathfrak{\bar{g}}^{e})^{I(m|0)}\subseteq\mathfrak{\bar{g}}^{I(m|0)}$.

\noindent Let $x=\begin{pmatrix}A & B\\
C & D
\end{pmatrix}\in\mathfrak{\bar{g}}$. Notice that $[I(m|0),x]=0$ if and only if $B=C=0$. This implies
that $x\in\mathfrak{\bar{g}}_{\bar{0}}$. Hence, we have $\mathfrak{\bar{g}}^{I(m|0)}=\mathfrak{\bar{g}}_{\bar{0}}$.
Therefore, we deduce that $\mathfrak{z}(\mathfrak{\bar{g}}^{e})\subseteq\mathfrak{\bar{g}}_{\bar{0}}$.\end{proof}
\end{singlespace}

\begin{singlespace}
Yakimova shows that $\mathfrak{z}(\mathfrak{gl}(m+n)^{e})=\langle I,e,...,e^{l}\rangle$
where $l=\lambda_{1}-1$ in \cite[Theorem 2]{Yakimova2009}. Now we
use the above result to work out a basis of $\mathfrak{z}(\mathfrak{\bar{g}}^{e})$. 
\end{singlespace}
\begin{thm}
\begin{singlespace}
\noindent \textup{$\mathfrak{z}(\mathfrak{\bar{g}}^{e})$} has a basis
$\{I,e,...,e^{l}:l=\lambda_{1}-1\}$.
\end{singlespace}
\end{thm}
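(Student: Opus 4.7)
The plan is to establish the two inclusions $\langle I,e,\dots,e^{\lambda_{1}-1}\rangle \subseteq \mathfrak{z}(\mathfrak{\bar{g}}^{e})$ and the reverse, where the second is the substantive one and relies on Proposition \ref{prop:I(m,0)} together with Yakimova's classical result and a supply of odd ``intertwiner'' elements from Hoyt's basis \eqref{eq:g^e(i,j,k)}.

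The forward inclusion is immediate. Since $e$ is even, $[e,y]=ey-ye=0$ for every $y \in \mathfrak{\bar{g}}^{e}$, hence $[e^{k},y]=0$ too, so each $e^{k}$ is super-central in $\mathfrak{\bar{g}}^{e}$; and $I$ is super-central in $\mathfrak{\bar{g}}$. Linear independence of $\{I,e,\dots,e^{\lambda_{1}-1}\}$ follows by evaluating at the cyclic generator $u_{1}$ of a Jordan block of length $\lambda_{1}$: the vectors $u_{1},eu_{1},\dots,e^{\lambda_{1}-1}u_{1}$ are linearly independent in $V$ while $e^{\lambda_{1}}u_{1}=0$.

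For the reverse inclusion, take $x\in\mathfrak{z}(\mathfrak{\bar{g}}^{e})$. By Proposition \ref{prop:I(m,0)}, $x$ is even, so write $x=x_{1}\oplus x_{2}$ and $e=e_{1}\oplus e_{2}$ with $x_{i},e_{i}\in\mathfrak{gl}(V_{\bar{i}})$. Because $x$ centralises $(\mathfrak{\bar{g}}^{e})_{\bar{0}}=\mathfrak{gl}(V_{\bar{0}})^{e_{1}}\oplus\mathfrak{gl}(V_{\bar{1}})^{e_{2}}$, the component $x_{i}$ lies in $\mathfrak{z}(\mathfrak{gl}(V_{\bar{i}})^{e_{i}})$, so Yakimova's theorem writes $x_{1}=f_{1}(e_{1})$ and $x_{2}=f_{2}(e_{2})$ for polynomials $f_{1},f_{2}$ of degrees at most $a-1$ and $b-1$ respectively, where $a$ and $b$ are the largest parts of the even and odd sub-partitions of $\lambda$ (so $\lambda_{1}=\max(a,b)$).

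The crux is to couple $f_{1}$ and $f_{2}$, i.e.\ to show that their coefficients agree in all degrees less than $\min(a,b)$. Assume without loss of generality $a\geq b$. For each $j$ with $|j|=\bar{1}$, pick $i$ with $|i|=\bar{0}$ and $\lambda_{i}\geq\lambda_{j}$, which is available since the longest even block has length $a\geq b\geq\lambda_{j}$. The element $y:=\xi_{i}^{j,0}$ of Hoyt's basis lies in $(\mathfrak{\bar{g}}^{e})_{\bar{1}}$ and corresponds to a matrix of the form $\left(\begin{smallmatrix}0&0\\ C&0\end{smallmatrix}\right)$ with $Ce_{1}=e_{2}C$ and $Cu_{i}=u_{j}$. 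The condition $[x,y]=0$ becomes $x_{2}C=Cx_{1}$; evaluating at $u_{i}$ and pushing $f_{1}(e_{1})$ through $C$ via the intertwining relation yields $f_{2}(e_{2})u_{j}=f_{1}(e_{2})u_{j}$. Since $\{u_{j},e_{2}u_{j},\dots,e_{2}^{\lambda_{j}-1}u_{j}\}$ is linearly independent, the coefficients of $f_{2}$ and $f_{1}$ agree in degrees $0,\dots,\lambda_{j}-1$. Ranging over all odd $j$ covers degrees $0,\dots,b-1$, and together with $x_{1}=f_{1}(e_{1})$ this forces $x=\sum_{l=0}^{\lambda_{1}-1}c_{l}e^{l}$ where $c_{l}$ are the coefficients of $f_{1}$. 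The main obstacle is exactly this coupling step: Yakimova leaves two a priori independent polynomials, and only the existence of enough surjective odd intertwiners forces $f_{2}$ to be determined by $f_{1}$; the symmetric case $b\geq a$ is handled by the analogous intertwiners $V_{\bar{1}}\to V_{\bar{0}}$.
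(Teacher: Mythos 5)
Your proof is correct, but it takes a genuinely different route from the paper's. Both arguments begin the same way, using Proposition \ref{prop:I(m,0)} to place $\mathfrak{z}(\mathfrak{\bar{g}}^{e})$ inside $\mathfrak{\bar{g}}_{\bar{0}}$. The paper then observes that an even (block-diagonal) element super-commutes with $y\in\mathfrak{\bar{g}}^{e}$ exactly when it commutes in the ordinary sense, that $\mathfrak{\bar{g}}^{e}$ coincides as a set of matrices with $\mathfrak{gl}(m+n)^{e}$, and that the same $I(m|0)$ trick shows $\mathfrak{z}(\mathfrak{gl}(m+n)^{e})$ is also block-diagonal; hence the two centres are literally equal and a single application of Yakimova's theorem to the full partition of $m+n$ finishes the proof. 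You instead apply Yakimova separately to the two diagonal blocks $\mathfrak{gl}(V_{\bar{0}})^{e_{1}}$ and $\mathfrak{gl}(V_{\bar{1}})^{e_{2}}$, obtaining two a priori independent polynomials $f_{1},f_{2}$, and then couple them using the odd intertwiners $\xi_{i}^{j,0}$ from Hoyt's basis via the relation $x_{2}C=Cx_{1}$ and $Ce_{1}=e_{2}C$. Your coupling step is sound (taking $j$ to be the longest odd block already gives agreement in degrees $0,\dots,b-1$, and $e_{2}^{l}=0$ for $l\geq b$ then yields $x=f_{1}(e)$), though you should note the degenerate case where one of $V_{\bar{0}},V_{\bar{1}}$ is zero, where the statement is Yakimova's theorem verbatim. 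What the paper's route buys is brevity: the identification $\mathfrak{z}(\mathfrak{\bar{g}}^{e})=\mathfrak{z}(\mathfrak{gl}(m+n)^{e})$ makes any coupling unnecessary. What your route buys is an explicit demonstration of how the odd part of $\mathfrak{\bar{g}}^{e}$ glues the centres of the two block-centralizers together --- precisely the kind of computation the paper later has to carry out by hand in the $\mathfrak{osp}(m|2n)$ case, where no analogous reduction to a single non-super $\mathfrak{gl}$ is available.
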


\begin{singlespace}
\noindent \begin{proof}Firstly, we denote the Lie algebra $\mathfrak{gl}(m+n)$
by $\mathfrak{g}'$. Note that $\mathfrak{gl}(m+n)$ is isomorphic
to $\mathfrak{\bar{g}}$ as a vector space. We also have that $\mathfrak{\bar{g}}^{e}=(\mathfrak{g}')^{e}$
as a vector space because $[e,x]=[e,x']$ for $x\in\mathfrak{\bar{g}}$
and $x'\in\mathfrak{g}'$. Denote by 
\[
\mathfrak{g}'_{0}=\left\{ \begin{pmatrix}A' & 0\\
0 & D'
\end{pmatrix}\in\mathfrak{g}':A'\in\mathfrak{gl}(m)\text{ and }D'\in\mathfrak{gl}(n)\right\} 
\]
and 
\[
\mathfrak{g}'_{1}=\left\{ \begin{pmatrix}0 & B'\\
C' & 0
\end{pmatrix}\in\mathfrak{g}':B'\text{is a }m\times n\text{ matrix and }C'\text{ is a }n\times m\text{ matrix}\right\} .
\]
 Then we let $\mathfrak{z}((\mathfrak{g}')^{e})_{0}=\mathfrak{z}((\mathfrak{g}')^{e})\cap\mathfrak{g}_{0}'$
and $\mathfrak{z}((\mathfrak{g}')^{e})_{1}=\mathfrak{z}((\mathfrak{g}')^{e})\cap\mathfrak{g}'_{1}$.
We further observe that $\mathfrak{z}(\mathfrak{\bar{g}}^{e})_{\bar{0}}=\mathfrak{z}((\mathfrak{g}')^{e})_{0}$
by definition. Since we already found that $\mathfrak{z}(\mathfrak{\bar{g}}^{e})\subseteq\mathfrak{\bar{g}}_{\bar{0}}$,
thus $\mathfrak{z}(\mathfrak{\bar{g}}^{e})_{\bar{1}}=0$. Using the
same argument as in Proposition \ref{prop:I(m,0)}, we have that $\mathfrak{z}((\mathfrak{g}')^{e})_{1}=0$.
Hence, $\mathfrak{z}(\mathfrak{\bar{g}}^{e})=\mathfrak{z}(\mathfrak{\bar{g}}^{e})_{\bar{0}}=\mathfrak{z}((\mathfrak{g}')^{e})_{0}=\mathfrak{z}((\mathfrak{g}')^{e})$.
Therefore, $\mathfrak{z}(\mathfrak{\bar{g}}^{e})=\langle I,e,...,e^{l}:l=\lambda_{1}-1\rangle$.\end{proof}
\end{singlespace}

\begin{singlespace}
Now we give a basis for $\mathfrak{z}(\mathfrak{g}^{e})$. We divide
our analysis into two cases: $m\neq n$ and $m=n>1$. 
\end{singlespace}
\begin{thm}
\begin{singlespace}
\noindent Let $\mathfrak{g}=\mathfrak{g}_{\bar{0}}\oplus\mathfrak{g}_{\bar{1}}=\mathfrak{sl}(m|n)$
and $\lambda$ be a partition of $(m|n)$ denoted as \textup{(\ref{eq:gl(m,n)-partition 1}).}
Let $e\in\mathfrak{g}_{\bar{0}}$ be a nilpotent element determined
by the Dynkin pyramid of $\lambda$, then $\mathfrak{z}(\mathfrak{g}^{e})=\langle e,...,e^{l}:l=\lambda_{1}-1\rangle$
except for $m=n,n>1$, in which case $\mathfrak{z}(\mathfrak{g}^{e})=\langle I,e,...,e^{l}:l=\lambda_{1}-1\rangle$.
\end{singlespace}
\end{thm}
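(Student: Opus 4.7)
My plan proceeds by showing both inclusions. The easy direction $\langle I, e, \ldots, e^{l}\rangle \cap \mathfrak{g} \subseteq \mathfrak{z}(\mathfrak{g}^{e})$ follows immediately from the previous theorem: each $e^{k}$ with $k \geq 1$ is nilpotent and has supertrace $0$, while $\mathrm{str}(I) = m-n$ vanishes iff $m = n$, so the claimed generators lie in both $\mathfrak{z}(\mathfrak{\bar{g}}^{e})$ and $\mathfrak{g}$, and hence in $\mathfrak{z}(\mathfrak{g}^{e})$.

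For the reverse inclusion, the crucial step is to prove $\mathfrak{z}(\mathfrak{g}^{e})_{\bar{1}} = 0$ for all values of $(m,n)$. Given $x \in \mathfrak{z}(\mathfrak{g}^{e})$, every basis vector $\xi_{i}^{j,0}$ with $|i| \neq |j|$ is odd, hence has supertrace $0$ and lies in $\mathfrak{g}^{e}$. Expanding the odd component $x_{\bar{1}} = \sum_{|p|\neq|q|,\, r} c_{p,q,r}\, \xi_{p}^{q,r}$ in the basis from (\ref{eq:g^e(i,j,k)}) and computing the super anticommutator
\[
\{\xi_{p}^{q,r},\, \xi_{i}^{j,0}\} \;=\; \delta_{p,j}\, \xi_{i}^{q,r} \;+\; \delta_{i,q}\, \xi_{p}^{j,r},
\]
the vanishing of $[x_{\bar{1}}, \xi_{i}^{j,0}]$ becomes, coefficient by coefficient, the relations $\delta_{a,i}\, c_{j,b,r} + \delta_{b,j}\, c_{a,i,r} = 0$ for all $(a,b,r)$. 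Letting $(i,j)$ range over all index pairs of opposite parity then forces each $c_{p,q,r}$ with $|p| \neq |q|$ to vanish.

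Once $x = x_{\bar{0}}$ is established to be even, it is block diagonal and hence commutes automatically with $I(m|0)$. Combined with the hypothesis that $x$ centralizes $\mathfrak{g}^{e}$ and the decomposition $\mathfrak{\bar{g}}^{e} = \mathfrak{g}^{e} \oplus \mathbb{C}\, I(m|0)$ read off from the proof of Theorem~\ref{thm:I(M,0)}, it follows that $x$ centralizes all of $\mathfrak{\bar{g}}^{e}$, so $x \in \mathfrak{z}(\mathfrak{\bar{g}}^{e}) = \langle I, e, \ldots, e^{l}\rangle$. Imposing $\mathrm{str}(x) = 0$ then kills the coefficient of $I$ precisely when $m \neq n$, giving the two cases in the statement.

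The main technical challenge I anticipate is the combinatorial bookkeeping in the step $x_{\bar{1}} = 0$: the super signs in the $\xi$-basis anticommutator formula must be handled correctly, and one has to verify that varying $(i,j)$ over all opposite-parity pairs really exhausts every odd index pattern $(p,q)$ and thus kills every odd coefficient. This requires only that at least one index of each parity be present, which holds whenever $m, n \geq 1$.
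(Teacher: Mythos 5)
Your overall architecture is sound, and the final reduction (once $x$ is even it commutes with $I(m|0)$, hence centralizes $\bar{\mathfrak{g}}^{e}=\mathfrak{g}^{e}\oplus\mathbb{C}I(m|0)$, hence lies in $\mathfrak{z}(\bar{\mathfrak{g}}^{e})$; then impose $\mathrm{str}=0$) would go through. But the crucial step $\mathfrak{z}(\mathfrak{g}^{e})_{\bar{1}}=0$ has a genuine gap. The test elements $\xi_{i}^{j,0}$ belong to $\bar{\mathfrak{g}}^{e}$ only when $\lambda_{j}\le\lambda_{i}$, because the basis of $\bar{\mathfrak{g}}^{e}$ requires $k\ge\max\{\lambda_{j}-\lambda_{i},0\}$; so $(i,j)$ cannot range over all opposite-parity pairs. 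Concretely, take $\lambda=(3|2,1)$ (so $m=n=3$) and the coefficient $c_{1,3,0}$ of $\xi_{1}^{3,0}$: your relations kill it only if there is an odd-parity index $i$ with $\lambda_{i}\ge\lambda_{1}=3$ or an even-parity index $j$ with $\lambda_{j}\le\lambda_{3}=1$, and neither exists. The same failure occurs for $\lambda=(5|1)$, where $\xi_{1}^{2,0}$ survives every bracket with an admissible $\xi_{i}^{j,0}$. It is in fact not central, but to detect this one must bracket against the hook $\xi_{2}^{1,4}$ with $k=\lambda_{1}-\lambda_{2}=4>0$; in general one must use $\xi_{i}^{j,\max\{\lambda_{j}-\lambda_{i},0\}}$, and then the resulting terms $\xi_{i}^{q,k+r}$ and $\xi_{p}^{j,k+r}$ can truncate to zero, so one has to verify that for each surviving odd coefficient some bracket still produces a genuinely nonzero term. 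Your sufficient condition (``at least one index of each parity'') is not the right one; the obstruction depends on the relative sizes of the parts across the two parities.

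For comparison, the paper sidesteps this entirely. For $m\neq n$ it writes $\bar{\mathfrak{g}}^{e}=\mathfrak{g}^{e}\oplus\mathbb{C}I$ with $I$ the full identity, which commutes with everything, so $\mathfrak{z}(\mathfrak{g}^{e})\subseteq\mathfrak{z}(\bar{\mathfrak{g}}^{e})$ with no parity analysis at all. For $m=n>1$ it brackets $x$ against the supertraceless Cartan elements $\lambda_{j}\xi_{i}^{i,0}-(-1)^{\bar{i}}\lambda_{i}\xi_{j}^{j,0}$ of $\mathfrak{g}^{e}$ when $r+s\ge3$ (with a separate computation for $r+s=2$), which kills all off-diagonal coefficients, odd and even alike, by a weight argument that does not depend on which hooks $\xi_{i}^{j,k}$ happen to exist. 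If you want to keep your uniform strategy, replacing the $\xi_{i}^{j,0}$-bracketing by that weight argument is the cleanest repair.
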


\begin{proof}When $m\neq n$, we know that $\mathfrak{\bar{g}}^{e}=\mathfrak{g}^{e}\oplus\mathbb{C}I$
and thus $\mathfrak{z}(\mathfrak{\bar{g}}^{e})\subseteq\mathfrak{z}(\mathfrak{g}^{e})\oplus\mathbb{C}I$.
Let $x\in\mathfrak{z}(\mathfrak{g}^{e})$, then $[x,y]=0$ for all
$y\in\mathfrak{g}^{e}$. We also know that $x\in\mathfrak{\bar{g}}^{e}$
since $\mathfrak{g}^{e}\subseteq\mathfrak{\bar{g}}^{e}$. Moreover,
we have that $[x,I]=0$ and $[x,y]=0$ for all $y\in\mathfrak{g}^{e}\oplus\mathbb{C}I$.
Thus we have that $\mathfrak{z}(\mathfrak{g}^{e})\subseteq\mathfrak{z}(\mathfrak{\bar{g}}^{e})$.
Hence, a basis of $\mathfrak{z}(\mathfrak{g}^{e})$ consists of all
basis vectors of $\mathfrak{z}(\mathfrak{\bar{g}}^{e})$ except the
identity matrix $I$. Therefore, we have that $\mathfrak{z}(\mathfrak{g}^{e})=\langle e,...,e^{l}:l=\lambda_{1}-1\rangle$
and $\dim\mathfrak{z}(\mathfrak{g}^{e})=\lambda_{1}-1$.

\noindent When $m=n,n>1$, for a partition $\lambda=(\lambda_{1},\dots,\lambda_{r+s})$,
we have elements $\lambda_{j}\xi_{i}^{i,0}-(-1)^{\bar{i}}\lambda_{i}\xi_{j}^{j,0}$
with $i\neq j$ lie in the Cartan subalgebra $\mathfrak{h}\subseteq\mathfrak{g}$.

\noindent Let $S=\langle I,e,...,e^{l}:l=\lambda_{1}-1\rangle$. Clearly
$S\subseteq\mathfrak{z}(\mathfrak{g}^{e})$. We know that $e^{k}=\sum_{i=1}^{r+s}\xi_{i}^{i,k}$
and $e^{k}\in\mathfrak{g}$ for all $0\leq k\leq\lambda_{1}-1$. Suppose
$x\in\mathfrak{z}(\mathfrak{g}^{e})$ is of the form
\[
x=\sum_{1\leq t,h\leq r+s,\max\{\lambda_{t}-\lambda_{h}\}\leq k\leq\lambda_{t}-1}c_{h}^{t,k}\xi_{h}^{t,k}
\]
 where $c_{h}^{t,k}\in\mathbb{C}$ are coefficients. If $r+s\geq3$,
then $x$ commutes with $\lambda_{j}\xi_{i}^{i,0}-(-1)^{\bar{i}}\lambda_{i}\xi_{j}^{j,0}$
for all $i,j$. By computing $[\lambda_{j}\xi_{i}^{i,0}-\left(-1\right)^{\bar{i}}\lambda_{i}\xi_{j}^{j,0},x]$
for $i\neq j$ we have
\[
[\lambda_{j}\xi_{i}^{i,0}-(-1)^{\bar{i}}\lambda_{i}\xi_{j}^{j,0},x]=\lambda_{j}(\sum_{h,k}c_{h}^{i,k}\xi_{h}^{i,k}-\sum_{t,k}c_{i}^{t,k}\xi_{i}^{t,k})+(-1)^{\bar{i}}\lambda_{i}(\sum_{t,k}c_{j}^{t,k}\xi_{j}^{t,k}-\sum_{h,k}c_{h}^{j,k}\xi_{h}^{j,k}).
\]
This is equal to $0$, which forces $\sum_{h,k}c_{h}^{i,k}\xi_{h}^{i,k}=\sum_{t,k}c_{i}^{t,k}\xi_{i}^{t,k}$
and $\sum_{h,k}c_{h}^{j,k}\xi_{h}^{j,k}=\sum_{t,k}c_{j}^{t,k}\xi_{j}^{t,k}$.
This implies that $c_{h}^{t,k}=0$ for all $h\neq t$ and thus $x\in\langle\xi_{t}^{t,k}\rangle$.

\noindent If $r+s=2$, we have $\lambda_{1}=\lambda_{2}=n>1$. In
this case we only deal with $\xi_{h}^{t,k}$ for $h,t\in\{1,2\}$.
Note that a basis of $\mathfrak{g}^{e}$ is $\{\xi_{1}^{1,0}+\xi_{2}^{2,0},\xi_{1}^{1,j},\xi_{2}^{2,j},\xi_{1}^{2,k},\xi_{2}^{1,k}:j=1,\dots,n-1,k=0,1,\dots,n-1\}$.
Hence, an element $y\in\mathfrak{g}^{e}$ is of the form $y=\sum_{1\leq t,h\leq2,0\leq k\leq n-1}c_{h}^{t,k}\xi_{h}^{t,k}+c(\xi_{1}^{1,0}+\xi_{2}^{2,0})$.
Now suppose $y\in\mathfrak{z}(\mathfrak{g}^{e})$, by computing $[\xi_{1}^{1,1},y]=\sum_{k=0}^{n-1}c_{2}^{1,k}\xi_{2}^{1,k+1}\pm\sum_{k=0}^{n-1}c_{1}^{2,k}\xi_{1}^{2,k+1}$,
we obtain that $c_{1}^{2,k}=0$ and $c_{2}^{1,k}=0$ for all $k=0,\dots,n-2$.
Then we calculate $[\xi_{2}^{1,0},y]=c_{1}^{2,n-1}(\xi_{1}^{1,n-1}\pm\xi_{2}^{2,n-1})$,
which implies that $c_{1}^{2,n-1}=0$. Similarly we have that $c_{2}^{1,n-1}=0$.
Therefore, we obtain that $x\in\langle\xi_{1}^{1,0}+\xi_{2}^{2,0},\xi_{t}^{t,k}:k>0\rangle$.

\noindent From above we have that $x=\sum_{t,k}c_{t}^{t,k}\xi_{t}^{t,k}$.
Adding an element of $S$ we may assume that $c_{1}^{1,k}(x)=0$ for
all $k$. Suppose $x\notin S$, then there exist some $c_{i}^{i,k}\neq0$.
Next considering $\xi_{1}^{i,0}\in\mathfrak{g}^{e}$, we have $[x,\xi_{1}^{i,0}]=\sum_{t,k}c_{t}^{t,k}(x)[\xi_{t}^{t,k},\xi_{1}^{i,0}]=\sum_{k}c_{i}^{i,k}(x)\xi_{1}^{i,k}\neq0$,
thus $x\notin\mathfrak{z}(\mathfrak{g}^{e})$. Hence $\mathfrak{z}(\mathfrak{g}^{e})\subseteq S$.
Therefore, we deduce that $\mathfrak{z}(\mathfrak{g}^{e})=S$ as required.
This implies that $\dim\mathfrak{z}(\mathfrak{g}^{e})=\lambda_{1}$.
\end{proof}

\subsection{Proof of theorems\label{subsec:Explanation-of-Theorems-A(m,n)}}

\begin{singlespace}
\noindent Let $r_{i}$ (resp. $s_{i}$) be the number of boxes with
parity $\bar{0}$ (resp. $\bar{1}$) in the $i$th column of the Dynkin
pyramid $P$ and denote $c_{i}=r_{i}+s_{i}$. 
\end{singlespace}

\begin{singlespace}
In order to prove Theorem 1 for $\mathfrak{g}=\mathfrak{sl}(m|n)$,
we first look at the case that the corresponding $\varDelta$ only
has even labels. Note that labels in labelled Dynkin diagram $\varDelta$
are the horizontal difference between consecutive boxes in the pyramid.
Hence, there is no label equal to $1$ in $\varDelta$ if and only
if $\lambda_{i}-\lambda_{i+1}$ are even for all $i=1,\dots,r+s$,
i.e. $\lambda_{i}$ are all even or all odd. Based on the way that
the labelled Dynkin diagram is constructed, we have that $n_{2}(\varDelta)=\lambda_{1}-1$.
Therefore, we have that $\dim\text{\ensuremath{\mathfrak{z}}}(\mathfrak{g}^{e})=n_{2}(\varDelta)$
for $m\neq n$ and $\dim\text{\ensuremath{\mathfrak{z}}}(\mathfrak{g}^{e})=n_{2}(\varDelta)+1$
for $m=n>1$.

Next we turn to look for $\text{\ensuremath{\mathfrak{z}}}(\mathfrak{g}^{h})$.
Note that an element in $\mathfrak{g}^{h}$ is of the form
\[
\begin{pmatrix}x_{-\lambda_{1}+1} & \cdots & 0\\
\vdots & \ddots & \vdots\\
0 & \cdots & x_{\lambda_{1}-1}
\end{pmatrix}
\]
 where $x_{i}\in\mathfrak{gl}(r_{i}|s_{i})$ are block matrices for
$i=-\lambda_{1}+1,-\lambda_{1}+3\dots,\lambda_{1}-3,\lambda_{1}-1$
such that $\sum_{i}\text{str}(x_{i})=0$. Thus we have that an element
in $\text{\ensuremath{\mathfrak{z}}}(\mathfrak{g}^{h})$ is of the
form 
\[
\begin{pmatrix}d_{-\lambda_{1}+1}I(r_{-\lambda_{1}+1}|s_{-\lambda_{1}+1}) & \cdots & 0\\
\vdots & \ddots & \vdots\\
0 & \cdots & d_{\lambda_{1}-1}I(r_{\lambda_{1}-1}|s_{\lambda_{1}-1})
\end{pmatrix}
\]
 for some $d_{i}\in\mathbb{Z}$ such that $\sum d_{i}(r_{i}-s_{i})=0$.
Hence, we deduce that $\dim\text{\ensuremath{\mathfrak{z}}}(\mathfrak{g}^{h})=\lambda_{1}-1=n_{2}(\varDelta)$
for $m\neq n$ and $\dim\text{\ensuremath{\mathfrak{z}}}(\mathfrak{g}^{h})=n_{2}(\varDelta)+1$
for $m=n>1$.

Next we prove Theorem 2 for Lie superalgebras $\mathfrak{sl}(m|n)$.
Based on the way that the labelled Dynkin diagram is constructed,
any labelled Dynkin diagram for $e\in\mathfrak{g}_{\bar{0}}$ is the
same as the labelled Dynkin diagram for $e\in\mathfrak{gl}(m+n)$
except some of the vertices are $\otimes$, i.e. given a nilpotent
element $e\in\mathfrak{g}_{\bar{0}}$, all the labels $a_{i}$ in
the labelled Dynkin diagram with respect to $\mathfrak{sl}(m|n)$
are the same as that in the labelled Dynkin diagram with respect to
$\mathfrak{gl}(m+n)$ so that $\sum a_{i}$ is also the same. We also
have $a_{i}=\text{col}(i+1)-\text{col}(i)$, thus $\sum a_{i}=\sum_{i=1}^{m+n-1}\left(\text{col}(i+1)-\text{col}(i)\right)=\text{col}(m+n)-\text{col}(1)=2\lambda_{1}-2=2\dim\text{\ensuremath{\mathfrak{z}}}(\mathfrak{g}^{e})-2$.
Therefore, we have that $\dim\text{\ensuremath{\mathfrak{z}}}(\mathfrak{g}^{e})=\left\lceil \frac{1}{2}\sum a_{i}\right\rceil +\varepsilon$
where $\varepsilon=0$ for $m\neq n$ and $\varepsilon=1$ for $m=n>1$.

To prove Theorem 3 for $\mathfrak{sl}(m|n)$, we define $\mathfrak{g}_{0}$
to be the subalgebra generated by the root spaces $\mathfrak{g}_{-\alpha}$
and $\mathfrak{g}_{\alpha}$ for $\alpha$ a simple root with label
$0$ or $1$ in $\varDelta$. We consider two general cases: the labelled
Dynkin diagram $\varDelta$ has no label equal to $1$ and $\varDelta$
has some labels equal to $1$.

When $\varDelta$ has no label equal to $1$. Note that $e_{0}=0$
since $\varDelta_{0}$ has all labels equal to $0$. We also have
$\mathfrak{g}_{0}^{e_{0}}=\mathfrak{g}_{0}=\bigoplus_{i\in\mathbb{Z}}\mathfrak{sl}(r_{i}|s_{i})$.
Then $\dim\mathfrak{g}_{0}^{e_{0}}=\dim\mathfrak{g}_{0}=\sum_{i\in\mathbb{Z}}\dim\mathfrak{sl}(r_{i}|s_{i})=\sum_{i\in\mathbb{Z}:c_{i}>0}(c_{i}^{2}-1)$.
Hence, $\dim\mathfrak{g}^{e}-\dim\mathfrak{g}_{0}^{e_{0}}=(\sum_{i\in\mathbb{Z}:c_{i}>0}c_{i}^{2}-1)-\sum_{i\in\mathbb{Z}:c_{i}>0}(c_{i}^{2}-1)=-1+\sum_{i\in\mathbb{Z}:c_{i}>0}1=\lambda_{1}-1=n_{2}(\varDelta)$
since there are in total $\lambda_{1}$ columns in $P$ with non-zero
boxes. Moreover, we have that $\dim\mathfrak{z}(\mathfrak{g}_{0}^{e_{0}})=\tau$
where $\tau$ is the number of $i$ such that $r_{i}=s_{i}$. Therefore,
we obtain that $\dim\mathfrak{z}(\mathfrak{g}^{e})-\dim\mathfrak{z}(\mathfrak{g}_{0}^{e_{0}})=n_{2}(\varDelta)-\tau$
for $m\neq n$ and $\dim\mathfrak{z}(\mathfrak{g}^{e})-\dim\mathfrak{z}(\mathfrak{g}_{0}^{e_{0}})=n_{2}(\varDelta)+1-\tau$
for $m=n>1$.

When $\varDelta$ has some labels equal to $1$. There are in total
$2\lambda_{1}+1$ columns with labels from $-\lambda_{1}$ to $\lambda_{1}$
in the Dynkin pyramid $P$. Let $k>0$ be minimal such that $c_{k}=0$
and thus we know that $n_{2}(\varDelta)=\lambda_{1}-k$. Then we have
that 
\begin{align*}
\mathfrak{g}_{0}\cong\mathfrak{sl}(r_{-\lambda_{1}+1} & |s_{-\lambda_{1}+1})\oplus\dots\oplus\mathfrak{sl}(r_{-k-1}|s_{-k-1})\oplus\mathfrak{sl}\left(\sum_{i=-k+1}^{k-1}r_{i}\vert\sum_{i=-k+1}^{k-1}s_{i}\right)\\
 & \oplus\mathfrak{sl}(r_{k+1}|s_{k+1})\dots\oplus\mathfrak{sl}(r_{\lambda_{1}-1}|s_{\lambda_{1}-1}).
\end{align*}
Note that the projection of $e_{0}$ in each $\mathfrak{sl}(r_{i}|s_{i})$
is equal to $0$ for $\ensuremath{i}>k$ and $i<-k$, thus $e_{0}\in\mathfrak{sl}\left(\sum_{i=-k+1}^{k-1}r_{i}\vert\sum_{i=-k+1}^{k-1}s_{i}\right)$.
We know that 
\[
\dim\mathfrak{sl}\left(\sum_{i=-k+1}^{k-1}r_{i}\vert\sum_{i=-k+1}^{k-1}s_{i}\right)^{^{e_{0}}}=\sum_{i=-k+1}^{k-1}c_{i}^{2}+\sum_{i=-k+1}^{k-1}c_{i}c_{i+1}-1
\]
and $\dim\mathfrak{sl}(r_{i}|s_{i})=c_{i}^{2}-1$. We also know that
$P$ is symmetric, thus 
\[
\dim\mathfrak{g}_{0}^{e_{0}}=\sum_{i=-k+1}^{k-1}c_{i}^{2}+\sum_{i=-k+1}^{k-1}c_{i}c_{i+1}-1+2(c_{k+1}^{2}-1)+\dots+2(c_{\lambda_{1}-1}^{2}-1).
\]
Therefore, we have $\dim\mathfrak{g}^{e}-\dim\mathfrak{g}_{0}^{e_{0}}=\lambda_{1}-k=n_{2}(\varDelta).$

Observe that when $r_{i}\neq s_{i}$ for all $i>k$ and $i<-k$ and
$\sum_{i=-k+1}^{k-1}r_{i}\neq\sum_{i=-k+1}^{k-1}s_{i}$, then $\dim\mathfrak{z}(\mathfrak{g}_{0}^{e_{0}})=k-1$
as $\mathfrak{z}(\mathfrak{sl}(r_{i}|s_{i}))=0$. However, when there
exist some $i$ for $i>k$ or $i<-k$ such that $r_{i}=s_{i}$, then
$\dim\mathfrak{z}(\mathfrak{sl}(r_{i}|r_{i}))=1$. Moreover, $\dim\mathfrak{z}\left(\mathfrak{sl}\left(\sum_{i=-k+1}^{k-1}r_{i}\mid\sum_{i=-k+1}^{k-1}s_{i}\right)^{e_{0}}\right)=k$
if $\sum_{i=-k+1}^{k-1}r_{i}=\sum_{i=-k+1}^{k-1}s_{i}$ and $\dim\mathfrak{z}\left(\mathfrak{sl}\left(\sum_{i=-k+1}^{k-1}r_{i}\mid\sum_{i=-k+1}^{k-1}s_{i}\right)^{e_{0}}\right)=k-1$
if $\sum_{i=-k+1}^{k-1}r_{i}\neq\sum_{i=-k+1}^{k-1}s_{i}$ by Subsection
\ref{subsec:cc-gl(m,n)}. Let 
\[
\nu_{0}=\begin{cases}
0 & \text{if }\sum_{i=-k+1}^{k-1}r_{i}\neq\sum_{i=-k+1}^{k-1}s_{i};\\
1 & \text{if }\sum_{i=-k+1}^{k-1}r_{i}=\sum_{i=-k+1}^{k-1}s_{i}.
\end{cases}
\]
Then we have that $\dim\mathfrak{z}(\mathfrak{g}_{0}^{e_{0}})=k-1+\tau+\nu_{0}$
where $\tau$ is the number of $i$ such that $i>k$ or $i<-k$ and
$r_{i}=s_{i}$. Therefore, we have $\dim\mathfrak{z}(\mathfrak{g}^{e})-\dim\mathfrak{z}(\mathfrak{g}_{0}^{e_{0}})=n_{2}(\varDelta)-\tau-\nu_{0}$
for $m\neq n$ and $\dim\mathfrak{z}(\mathfrak{g}^{e})-\dim\mathfrak{z}(\mathfrak{g}_{0}^{e_{0}})=n_{2}(\varDelta)+1-\tau-\nu_{0}$
for $m=n>1$.
\end{singlespace}
\begin{singlespace}

\section{The ortho-symplectic Lie superalgebras\label{sec:osp}}
\end{singlespace}

\subsection{Matrix expression of ortho-sympletic Lie superalgebras\label{subsec:Construction-of-osp}}

\begin{singlespace}
\noindent Suppose $V=V_{\bar{0}}\oplus V_{\bar{1}}$ is a $\mathbb{Z}_{2}$-graded
vector space over $\mathbb{C}$. Let $B:V\times V\rightarrow V$ be
a non-degenerate even supersymmetric bilinear form on $V$, i.e. $B(V_{\bar{i}},V_{\bar{j}})=0$
unless $\bar{i}+\bar{j}=\bar{0}$, the restriction of $B$ to $V_{\bar{0}}$
is symmetric and the restriction of $B$ to $V_{\bar{1}}$ is skew-symmetric.
Recall that the ortho-symplectic Lie superalgebra $\mathfrak{osp}(V)$
is defined to be $\mathfrak{osp}(V)=\mathfrak{osp}(V)_{\bar{0}}\oplus\mathfrak{osp}(V)_{\bar{1}}$
where 
\begin{equation}
\mathfrak{osp}(V)_{\bar{i}}:=\{x\in\mathfrak{gl}(V)_{\bar{i}}:B(x(v),w)=-(-1)^{\bar{i}\bar{v}}B(v,x(w))\text{ for homogeneous }v,w\in V\}\label{eq:osp(V)}
\end{equation}
for $\bar{i}\in\{\bar{0},\bar{1}\}$ and $\bar{v}$ is the parity
of $v$. We write $\mathfrak{osp}(m|2n)$ for $\mathfrak{osp}(V)$
when $\dim V_{\bar{0}}=m$ and $\dim V_{\bar{1}}=2n$. Note that the
even part $\mathfrak{g}_{\bar{0}}=\mathfrak{o}(m)\oplus\mathfrak{sp}(2n)$.
\end{singlespace}

\begin{singlespace}
We next explain how to represent $\mathfrak{osp}(m|2n)$ using matrices
with respect to certain choices of basis of $V$. Let $l=\left\lfloor \frac{m}{2}\right\rfloor $.
Given any sequence $\eta=(\eta_{1},\dots,\eta_{l+n})\in\{\bar{0},\bar{1}\}$
such that $\ensuremath{\left|\{i:\eta_{i}=\bar{0}\}\right|}=l$ and
$\ensuremath{\left|\{i:\eta_{i}=\bar{1}\}\right|}=n$. Then when $m$
is odd, we define the standard basis $\mathfrak{B}$ of $V$ with
respect to $\eta$ to be $\mathfrak{B}=\{v_{1}^{\eta_{1}},\dots,v_{l+n}^{\eta_{l+n}},v_{0}^{\bar{0}},v_{-(l+n)}^{\eta_{l+n}},\dots,v_{-1}^{\eta_{1}}\}$,
where $v_{i}^{\eta_{i}},v_{-i}^{\eta_{i}}\in V_{\bar{0}}$ if $\eta_{i}=\bar{0}$
and $v_{i}^{\eta_{i}},v_{-i}^{\eta_{i}}\in V_{\bar{1}}$ if $\eta_{i}=\bar{1}$
for each $i$. When $m$ is even, $\mathfrak{B}=\{v_{1}^{\eta_{1}},\dots,v_{l+n}^{\eta_{l+n}},v_{-(l+n)}^{\eta_{l+n}},\dots,v_{-1}^{\eta_{1}}\}$
is the standard basis of $V$. With the above basis, the non-degenerate
even supersymmetric bilinear form $B$ on $V$ is given by
\end{singlespace}

\begin{equation}
B(v_{i}^{\eta_{i}},v_{j}^{\eta_{j}})=\begin{cases}
0 & \text{\text{if}\ }i\neq-j;\\
1 & \text{\text{if}\ }i=-j,\ \eta_{i}=\bar{0}\text{ or }\eta_{i}=\bar{1}\text{ and }i>0\\
-1 & \text{\text{if}\ }i=-j,\ \eta_{i}=\bar{1},\ i<0.
\end{cases}\label{eq:bilinear B}
\end{equation}
Thus the matrix of $B$ with respect to basis $\mathfrak{B}$ is 
\[
\begin{pmatrix} &  &  &  &  & 1\\
 &  &  &  & \diagup\\
 &  &  & 1\\
 &  & \left(-1\right)^{\eta_{1}}\\
 & \diagup\\
\left(-1\right)^{\eta_{k+n}}
\end{pmatrix}
\]

Next we explain a basis of $\mathfrak{g}$ corresponding to the above
basis $\mathfrak{B}$ of $V$. Define $E_{j,k}$ to be the linear
transformation sending $v_{k}^{\eta_{k}}$ to $v_{j}^{\eta_{j}}$.
Note that $\mathfrak{osp}(V)_{\bar{0}}$ is spanned by elements of
the form $E_{j,-j}$ for $\eta_{j}=\bar{1}$ and $E_{j,k}+\gamma_{-k,-j}E_{-k,-j}$
for $\eta_{j}=\eta_{k}$ and $j\neq-k$ and $\mathfrak{osp}(V)_{\bar{1}}$
is spanned by elements of the form $E_{j,k}+\gamma_{-k,-j}E_{-k,-j}$
for $\eta_{j}\neq\eta_{k}$ where $\gamma_{-k,-j}=\pm1$ as specified
below. We use equation (\ref{eq:osp(V)}) to determine $\gamma_{-k,-j}$.
For $i\in\{\bar{0},\bar{1}\},j,k\neq0$, we have 
\begin{equation}
B((E_{j,k}+\gamma_{-k,-j}E_{-k,-j})v_{k}^{\eta_{k}},v_{-j}^{\eta_{j}})=-(-1)^{i\eta_{k}}B(v_{k}^{\eta_{k}},(E_{j,k}+\gamma_{-k,-j}E_{-k,-j})v_{-j}^{\eta_{j}}).\label{eq:B()}
\end{equation}
according to (\ref{eq:osp(V)}). Then by (\ref{eq:bilinear B}) we
have that 
\[
\text{LHS of \eqref{eq:B()}}=B(v_{j}^{\eta_{j}},v_{-j}^{\eta_{j}})=\begin{cases}
1 & \text{\text{if}\ }\eta_{j}=\bar{0}\text{ or }\eta_{j}=\bar{1},\ j>0;\\
-1 & \text{\text{if}\ }\eta_{j}=\bar{1},\ j<0.
\end{cases}
\]
 and 
\begin{align*}
\text{RHS of \eqref{eq:B()}} & =-(-1)^{i\eta_{k}}\gamma_{-k,-j}B(v_{k}^{\eta_{k}},v_{-k}^{\eta_{k}})\\
 & =\begin{cases}
-(-1)^{i\eta_{k}}\gamma_{-k,-j} & \text{\text{if}\ }\eta_{k}=\bar{0}\text{ or }\eta_{k}=\bar{1},\ k>0;\\
(-1)^{i\eta_{k}}\gamma_{-k,-j} & \text{\text{if}\ }\eta_{k}=\bar{1},\ k<0.
\end{cases}
\end{align*}
Hence, for $E_{j,k}+\gamma_{-k,-j}E_{-k,-j}\in\mathfrak{osp}(V)_{\bar{0}}$,
we have that 
\[
\gamma_{-k,-j}=\begin{cases}
1 & \text{if }\eta_{j}=\eta_{k}=\bar{1}\text{ and }jk<0,\\
-1 & \text{if }\eta_{j}=\eta_{k}=\bar{0}\text{ or }\eta_{j}=\eta_{k}=\bar{1}\text{ and }jk>0.
\end{cases}
\]
 For $E_{j,k}+\gamma_{-k,-j}E_{-k,-j}\in\mathfrak{osp}(V)_{\bar{1}}$,
we have that $\eta_{j}\neq\eta_{k}$. Let $\text{sign}(j)=1$ for
$j>0$ and $\text{sign}(j)=-1$ for $j<0$. We deduce that 
\begin{equation}
\gamma_{-k,-j}=-\eta_{j}\text{sign}(j)+\eta_{k}\text{sign}(k)\text{ for all }\eta_{j}\neq\eta_{k}.\label{eq:sign}
\end{equation}
 Note that $\eta_{j},\eta_{k}$ in equation (\ref{eq:sign}) are viewed
as $0,1\in\mathbb{Z}$.

\begin{singlespace}
We further calculate signs in basis elements $e_{0,k}+\gamma_{-k,0}e_{-k,0}$
for $k>0$ and $e_{k,0}+\gamma_{0,-k}e_{0,-k}$ for $k>0$ if they
exist. Applying a similar argument we get that for $k>0$,
\begin{equation}
\gamma_{-k,0}=\begin{cases}
-1 & \text{\text{if}\ }\eta_{k}=\bar{0},\\
1 & \text{\text{if}\ }\eta_{k}=\bar{1}.
\end{cases}\text{ and }\gamma_{0,-k}=-1.\label{eq:sign in osp -3}
\end{equation}

Therefore, for the matrix expression of $\mathfrak{osp}(m|2n)$, we
fix all entries above the skew diagonal to have positive sign and
then use the above rules to determine signs of entries below the skew
diagonal. More precisely, a basis of $\mathfrak{osp}(m|2n)$ is 
\begin{align}
\{e_{i,-i},e_{j,k}+\gamma_{-k,-j}e_{-k,-j} & :\eta_{i}=\bar{1},0<j,k\leq l+n,\text{ or }j=0,k>0,\label{eq:basis for osp}\\
 & \text{ or }j>0,k=0\text{ or }jk<0,j+k<0\}\nonumber 
\end{align}
 where $\gamma_{-k,-j}$ is determined as above.
\end{singlespace}
\begin{example}
\begin{singlespace}
For $\mathfrak{osp}(3|2)$, take the basis to be $\{v_{1}^{\bar{0}},v_{2}^{\bar{1}},v_{0}^{\bar{0}},v_{-2}^{\bar{1}},v_{-1}^{\bar{0}}\}$,
then we have
\[
\mathfrak{osp}(3|2)=\left\{ \begin{pmatrix}a & b & c & d & 0\\
e & f & g & h & -d\\
k & l & 0 & -g & -c\\
r & s & l & -f & b\\
0 & r & -k & -e & -a
\end{pmatrix}:a,b,c,d,e,f,g,h,k,l,r,s\in\mathbb{C}\right\} .
\]
\end{singlespace}
\end{example}

\subsection{The ortho-symplectic Dynkin pyramid\label{subsec:Hoyt's-ortho-symplectic-Dynkin}}

\begin{singlespace}
\noindent Note that the nilpotent $G$-orbits in $\mathfrak{g}_{\bar{0}}$
are parametrized by the partitions of $(m|2n)$. Let $\lambda$ be
a partition of $(m|2n)$ such that 
\begin{equation}
\lambda=(p|q)=(p_{1},\dots,p_{r}|q_{1},\dots,q_{s})\label{eq:osp(m,2n)-partition 0}
\end{equation}
where $p$ (resp. $q$) is a partition of $m$ (resp. $2n$), $p_{1}\geq\dots\geq p_{r},q_{1}\geq\dots\geq q_{s}$
and all even parts of $p$ and all odd parts of $q$ have even multiplicity.
Write $\lambda=(c^{m_{c}+n_{c}},\dots,1^{m_{1}+n_{1}})$ such that
$m_{i}=\vert\{j:p_{j}=i\}\vert$ and $n_{i}=\vert\{j:q_{j}=i\}\vert$.
In this subsection, we recall the ortho-symplectic Dynkin pyramid
$P$ for $\lambda$ which is given in \cite[Section 8]{Hoyt2012}.
We use the ortho-symplectic Dynkin pyramid to give a nice representative
of the nilpotent orbit and determine the labelled Dynkin diagram with
respect to this orbit. Note that we use a numbering which is different
from that in \cite[Section 8]{Hoyt2012}.
\end{singlespace}

\begin{singlespace}
Recall that $P$ consists of $(m+2n)$ boxes with size $2\times2$
in the $xy$-plane and is centrally symmetric about $(0,0)$. Define
the row number (resp. column number) of a box to be the $y$-coordinate
(resp. $x$-coordinate) of the centre of the box. Below we describe
the rule to place boxes in the upper half plane and the rest of the
boxes are added to the lower half plane in a centrally symmetric way. 

Firstly, we set the zeroth row to be empty if $m$ is even. If $m$
is odd, there exist some odd parts appearing with odd multiplicity
and we let $a_{1}$ be the largest such part in $\lambda$ for all
$p_{i}$. Then we put $a_{1}$ boxes into the zeroth row in the columns
$1-a_{1},3-a_{1},\dots,a_{1}-1$. Next we remove one part of $a_{1}$
from $\lambda$.
\end{singlespace}

Then $p$ becomes a partition that contains an even number of odd
parts with odd multiplicity. Denote these representatives by $c_{1}>b_{1}>\dots>c_{N}>b_{N}$.
In the upper half plane, the rest of the boxes are added inductively
to the next row following the rules below. 

Suppose $a_{2}$ is the largest part remaining in $\lambda$. When
$m_{a_{2}}$ is odd, then $a_{2}=c_{k}$ for some $k\in\{1,\dots,N\}$.
We add an even skew row of length $\frac{c_{k}+b_{k}}{2}$ and put
boxes in this row in the columns $1-b_{k},3-b_{k},\dots,c_{k}-1$
with even parity $\bar{0}$. After that we remove $c_{k}$ and $b_{k}$
from the partition. Then add $\left\lfloor \frac{m_{a_{2}}}{2}\right\rfloor $
rows of length $a_{2}$ and boxes in these rows are placed in the
columns $1-a_{2},3-a_{2},\dots,a_{2}-1$ with parity $\bar{0}$. When
$m_{a_{2}}$ is even, we only add $\left\lfloor \frac{m_{a_{2}}}{2}\right\rfloor $
rows in columns $1-a_{2},3-a_{2},\dots,a_{2}-1$ with parity $\bar{0}$.
When $n_{a_{2}}$ is odd, then an odd skew row of length $\frac{a_{2}}{2}$
is added in the columns $1,\dots,a_{2}-1$ with boxes labelled by
parity $\bar{1}$. We draw a box with a cross through it to represent
each missing box in skew rows. Then we add $\left\lfloor \frac{n_{a_{2}}}{2}\right\rfloor $
rows of length $a_{2}$ and put boxes in the columns $1-a_{2},3-a_{2},\dots,a_{2}-1$
with parity $\bar{1}$. When $n_{a_{2}}$ is even, we only add $\left\lfloor \frac{n_{a_{2}}}{2}\right\rfloor $
rows in the columns $1-a_{2},3-a_{2},\dots,a_{2}-1$ with parity $\bar{1}$.
Then remove $a_{2}^{m_{a_{2}}+n_{a_{2}}}$ from $\lambda$.

\begin{singlespace}
Let $l=\left\lfloor \frac{m}{2}\right\rfloor $. In this paper, we
label boxes in $P$ down columns from left to right with numbers $1,\dots,l+n,-(l+n),\dots,-1$
such that boxes labelled by $i$ and $-i$ are central symmetrically.
For the case where $m$ is odd we have an additional central box which
is labelled by $0$. 
\end{singlespace}
\begin{example}
\begin{singlespace}
\noindent \label{exa:(5,3,1|3,3)=000026(3,3|4)}The ortho-symplectic
Dynkin pyramids for the partitions $(5,3,1|3,3)$ and $(3,3|4)$ are:
\end{singlespace}

\begin{doublespace}
\noindent \ \ \ \ \ \ \ \ \ \ \ \ \ \ \ \ \ \ \ \ \ \ \ \ \ \includegraphics[scale=0.9]{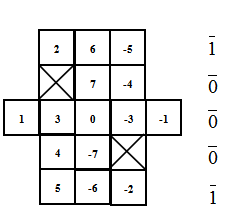}\ \ \ \ \ \ \ \includegraphics[scale=0.9]{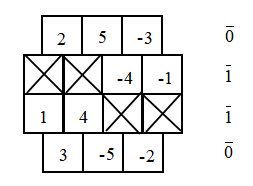}
\end{doublespace}
\end{example}

\begin{singlespace}
Let $\mathrm{row}(i)$ (resp. $\mathrm{col}(i)$) be the row number
(resp. column number) of the $i$th box. Let $\ensuremath{\left|\mathrm{row}(i)\right|}\in\{\bar{0},\bar{1}\}$
be the parity of $\mathrm{row}(i)$ for $i=\pm1,\dots,\pm(l+n)$ (resp.
$i=\pm1,\dots,\pm(l+n),0$) if $m$ is even (resp. $m$ is odd). Note
that the above numbering of the Dynkin pyramid determines a basis
$\mathfrak{B}$ of $V$ which is given in Subsection \ref{subsec:Construction-of-osp}
and $\eta_{i}=\ensuremath{\left|\mathrm{row}(i)\right|}$. 
\end{singlespace}

According to \cite[Section 8]{Hoyt2012}, the ortho-symplectic pyramid
$P$ determines a nilpotent element $e\in\mathfrak{g}_{\bar{0}}$
and $e=\sum\gamma_{i,j}E_{i,j}$ where the sum is over all $i$ and
$j$ such that 

\begin{singlespace}
1. $\mathrm{row}(i)=\mathrm{row}(j)$ and $\mathrm{col}(j)=\mathrm{col}(i)-2;$

2. $\mathrm{row}(i)=-\mathrm{row}(j),\left|\mathrm{row}(i)\right|=\bar{0}$
and the box labelled by $i$ is in an skew-row in the upper half plane,
$\mathrm{col}(i)=2$ and $\mathrm{col}(j)=0$ or $\mathrm{col}(i)=0$
and $\mathrm{col}(j)=-2$;

3. $\mathrm{row}(i)=-\mathrm{row}(j),\left|\mathrm{row}(i)\right|=\bar{1}$
and the box labelled by $i$ is in an skew-row in the upper half plane,
$\mathrm{col}(i)=1$ and $\mathrm{col}(j)=-1$.
\end{singlespace}

\begin{singlespace}
\noindent The pyramid $P$ also defines a semisimple element $h\in\mathfrak{g}_{\bar{0}}$
such that $h$ is the $(m+2n)$-diagonal matrix where the $i$th entry
is $-\mathrm{col}(i)$. Note that $\{e,h\}$ can be extended to an
$\mathfrak{sl}(2)$-triple $\{e,h,f\}$ in $\mathfrak{g}_{\bar{0}}$
according to \cite[Section 8]{Hoyt2012}.
\end{singlespace}

\subsection{Root system and and labelled Dynkin diagram for $\mathfrak{osp}(m|2n)$}

\begin{singlespace}
\noindent Let $\mathfrak{g}=\mathfrak{osp}(m|2n)=\mathfrak{g}_{\bar{0}}\oplus\mathfrak{g}_{\bar{1}}$
and $\mathfrak{h}$ be the set consisting of all diagonal matrices
in $\mathfrak{g}$. A basis of $\mathfrak{h}^{*}$ is given by \{$\varepsilon_{1}^{\eta_{1}}$,
$\dots$, $\varepsilon_{l+n}^{\eta_{l+n}}$\} where $\eta_{i}$ are
the parities as defined in Subsection \ref{subsec:Construction-of-osp}
and $(\varepsilon_{i}^{\eta_{i}},\varepsilon_{j}^{\eta_{j}})=(-1)^{\eta_{i}}\delta_{ij}$. 
\end{singlespace}

\begin{singlespace}
According to \cite[Section 2.3]{Musson2012}, for odd $m\geq1$, $n\geq1$,
the root system for $\mathfrak{g}$ is given by $\Phi=\Phi_{\bar{0}}\cup\Phi_{\bar{1}}$
such that 
\[
\Phi_{\bar{0}}=\{\pm\varepsilon_{i}^{\eta_{i}}\pm\varepsilon_{j}^{\eta_{j}}:\eta_{i}=\eta_{j}\}\cup\{\pm\varepsilon_{i}^{\bar{0}}\}\cup\{\pm2\varepsilon_{i}^{\bar{1}}\},
\]
\[
\Phi_{\bar{1}}=\{\pm\varepsilon_{i}^{\bar{1}}\}\cup\{\pm\varepsilon_{i}^{\eta_{i}}\pm\varepsilon_{j}^{\eta_{j}}:\eta_{i}\neq\eta_{j}\};
\]
and a choice of positive roots is $\Phi^{+}=\{\varepsilon_{i}^{\eta_{i}}\pm\varepsilon_{j}^{\eta_{j}},\varepsilon_{k}^{\eta_{k}},2\varepsilon_{t}^{\bar{1}}\}$. 
\end{singlespace}

For $m=2$, $n\geq1$, the root system for $\mathfrak{g}$ is given
by $\Phi=\Phi_{\bar{0}}\cup\Phi_{\bar{1}}$ such that 
\[
\Phi_{\bar{0}}=\{\pm\varepsilon_{i}^{\bar{1}}\pm\varepsilon_{j}^{\bar{1}}\}\cup\{\pm2\varepsilon_{i}^{\bar{1}}\},\Phi_{\bar{1}}=\{\pm\varepsilon_{1}^{\eta_{1}}\pm\varepsilon_{j}^{\eta_{j}}:\eta_{1}\neq\eta_{j}\};
\]
and a choice of positive roots is $\Phi^{+}=\{\varepsilon_{i}^{\eta_{i}}\pm\varepsilon_{j}^{\eta_{j}},2\varepsilon_{t}^{\bar{1}}\}$. 

For even $m>2$, $n\geq1$, the root system for $\mathfrak{g}$ is
given by $\Phi=\Phi_{\bar{0}}\cup\Phi_{\bar{1}}$ such that

\begin{singlespace}
\[
\Phi_{\bar{0}}=\{\pm\varepsilon_{i}^{\eta_{i}}\pm\varepsilon_{j}^{\eta_{j}}:\eta_{i}=\eta_{j}\}\cup\{\pm2\varepsilon_{i}^{\bar{1}}\},\Phi_{\bar{1}}=\{\pm\varepsilon_{i}^{\eta_{i}}\pm\varepsilon_{j}^{\eta_{j}}:\eta_{i}\neq\eta_{j}\};
\]
 and a choice of positive roots is $\Phi^{+}=\{\varepsilon_{i}^{\eta_{i}}\pm\varepsilon_{j}^{\eta_{j}},2\varepsilon_{t}^{\bar{1}}\}$.
\end{singlespace}

We have that odd roots $\pm\varepsilon_{i}^{\bar{1}}$ are non-isotropic
and all other odd roots are isotropic.

The labelled Dynkin diagram with respect to $e\in\mathfrak{g}_{\bar{0}}$
is constructed as follows: firstly, draw the ortho-symplectic Dynkin
pyramid $P$ of $\lambda$ following Subsection \ref{subsec:Hoyt's-ortho-symplectic-Dynkin}.
For boxes labelled by $i=1,\dots,l+n-1$, we associate a white node
$\fullmoon$ (resp. a grey node $\otimes$) to the root $\alpha_{i}$
if $\ensuremath{\left|\mathrm{row}(i+1)\right|}=\ensuremath{\left|\mathrm{row}(i)\right|}$
(resp. $\ensuremath{\left|\mathrm{row}(i+1)\right|}\neq\ensuremath{\left|\mathrm{row}(i)\right|}$)
and connect the $(i-1)$th and $i$th node with a single line. We
label the $i$th node with $a_{i}=\mathrm{col}(i+1)-\mathrm{col}(i)$.
For $i=l+n$, we need to consider different cases. 

\begin{singlespace}
When $m$ is odd, we associate a white node $\fullmoon$ (resp. a
black node $\newmoon$) to root $\alpha_{l+n}$ if $\ensuremath{\left|\mathrm{row}(l+n)\right|}=\ensuremath{\left|\mathrm{row}(0)\right|}$
(resp. $\ensuremath{\left|\mathrm{row}(l+n)\right|}\neq\ensuremath{\left|\mathrm{row}(0)\right|}$).
We connect the $(l+n-1)$th and $(l+n)$th node with $2$ lines and
put an arrow pointing from the $(l+n-1)$th node to the $(l+n)$th
node. The $(l+n)$th node is labelled by $a_{l+n}=\mathrm{col}(0)-\mathrm{col}(l+n)$. 

When $m=2$.
\end{singlespace}
\begin{itemize}
\item If $\vert\mathrm{row}(n+1)\vert=\bar{1}$, we associate a grey node
$\otimes$ to root $\alpha_{n+1}$. We connect the $n$th and $(n+1)$th
node with 2 lines and connect the $(n-1)$th and $(n+1)$th node with
a single line. The $(n+1)$th node is labelled by $a_{n+1}=-\mathrm{col}(n+1)-\mathrm{col}(n)$. 
\begin{singlespace}
\item If $\vert\mathrm{row}(n+1)\vert=\bar{0}$, we associate a white node
$\fullmoon$ to root $\alpha_{n+1}$. We connect the $n$th and $(n+1)$th
node with $2$ lines and put an arrow pointing from the $(n+1)$th
node to the $n$th node. The $(n+1)$th node is labelled by $a_{n+1}=-2\mathrm{col}(n+1)$. 
\end{singlespace}
\end{itemize}
When $m>2$ is even.
\begin{itemize}
\item If $\vert\mathrm{row}(l+n)\vert=\bar{0}$ and $\vert\mathrm{row}(l+n-1)\vert=\bar{1}$,
we associate a grey node $\otimes$ to root $\alpha_{l+n}$. We connect
the $(l+n)$th and the $(l+n-1)$th node with $2$ lines and connect
the $(l+n-2)$th and $(l+n)$th node with a single line. The $(l+n)$th
node is labelled by $-\mathrm{col}(l+n)-\mathrm{col}(l+n-1)$. 
\begin{singlespace}
\item If $\vert\mathrm{row}(l+n)\vert=\vert\mathrm{row}(l+n-1)\vert=\bar{0}$,
we associate a white node $\fullmoon$ to root $\alpha_{l+n}$ and
put a single line between the $(l+n)$th and the $(l+n-2)$th node.
The $(l+n)$th node is labelled by $a_{l+n}=-2\mathrm{col}(l+n)$. 
\item If $\vert\mathrm{row}(l+n)\vert=\bar{1}$, we associate a white node
$\fullmoon$ to root $\alpha_{l+n}$ and connect the $(l+n)$th and
the $(l+n-1)$th node with $2$ lines. An arrow is pointing from $(l+n)$th
node to the $(l+n-1)$th node. The $(l+n)$th node is labelled by
$-\mathrm{col}(l+n)-\mathrm{col}(l+n-1)$.
\end{singlespace}
\end{itemize}
We describe the corresponding simple roots and draw the labelled Dynkin
diagram for each case in Table \ref{tab:LDD-osp}. Note that the symbol
\includegraphics[width=0.5cm,height=0.5cm]{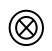}
in Table \ref{tab:LDD-osp} represents either a white or grey node
can appear.

\begin{singlespace}
\noindent %
\begin{longtable}[c]{|>{\raggedright}m{2cm}||>{\centering}m{5cm}||>{\centering}m{7cm}|}
\caption{\label{tab:LDD-osp}Labelled Dynkin diagrams for $\mathfrak{osp}(m|2n)$}
\tabularnewline
\endfirsthead
\hline 
 & \begin{singlespace}
\noindent sets of simple roots
\end{singlespace}
 & labelled Dynkin diagram\tabularnewline
\hline 
\hline 
\multirow{1}{2cm}{$m\geq1$ is odd} & \multirow{1}{5cm}{\begin{singlespace}
\noindent $\{\varepsilon_{1}^{\eta_{1}}-\varepsilon_{2}^{\eta_{2}},\dots,\varepsilon_{l+n-1}^{\eta_{l+n-1}}-\varepsilon_{l+n}^{\eta_{l+n}},\varepsilon_{l+n}^{\eta_{l+n}}\}$
\end{singlespace}
} & \includegraphics[viewport=0bp 0bp 268bp 54.2045bp,scale=0.6]{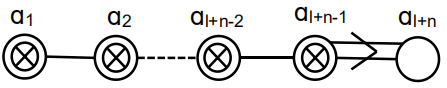}

\includegraphics[scale=0.6]{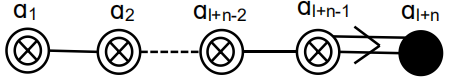}\vspace{0.5cm}
\tabularnewline
\hline 
\hline 
$m=2$ & \begin{singlespace}
\noindent $\{\varepsilon_{1}^{\eta_{1}}-\varepsilon_{2}^{\eta_{2}},\dots,\varepsilon_{n}^{\eta_{n}}-\varepsilon_{n+1}^{\eta_{n+1}},2\varepsilon_{n+1}^{\bar{1}}\}$
\end{singlespace}
 & \includegraphics[viewport=0bp 0bp 270.6bp 71.5152bp,scale=0.6]{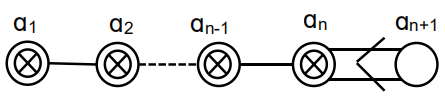}\vspace{0.5cm}
\tabularnewline
\hline 
\hline 
$m=2$ & \begin{singlespace}
$\{\varepsilon_{1}^{\eta_{1}}-\varepsilon_{2}^{\eta_{2}},\dots,\varepsilon_{n}^{\bar{1}}-\varepsilon_{n+1}^{\bar{0}},\varepsilon_{n}^{\bar{1}}+\varepsilon_{n+1}^{\bar{0}}\}$
\end{singlespace}
 & \includegraphics[viewport=0bp 0bp 267.6bp 142.8bp,scale=0.6]{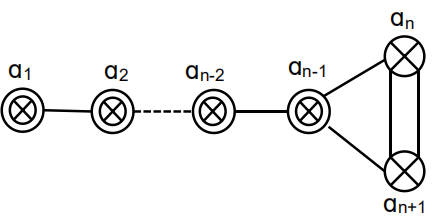}\vspace{0.5cm}
\tabularnewline
\hline 
\hline 
\multirow{3}{2cm}{$m>2$ is even} & \begin{singlespace}
\noindent $\{\varepsilon_{1}^{\eta_{1}}-\varepsilon_{2}^{\eta_{2}},\dots,\varepsilon_{l+n-1}^{\eta_{l+n-1}}-\varepsilon_{l+n}^{\bar{1}},2\varepsilon_{l+n}^{\bar{1}}\}$
\end{singlespace}
 & \includegraphics[viewport=0bp 0bp 271bp 53.1954bp,scale=0.6]{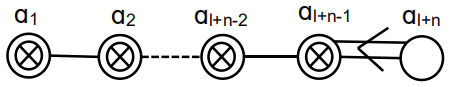}\vspace{0.5cm}
\tabularnewline
\cline{2-3} \cline{3-3} 
 & \begin{singlespace}
\noindent $\{\varepsilon_{1}^{\eta_{1}}-\varepsilon_{2}^{\eta_{2}},\dots,\varepsilon_{l+n-1}^{\bar{1}}-\varepsilon_{l+n}^{\bar{0}},\varepsilon_{l+n-1}^{\bar{1}}+\varepsilon_{l+n}^{\bar{0}}\}$
\end{singlespace}
 & \includegraphics[viewport=0bp 0bp 268bp 159.1453bp,scale=0.6]{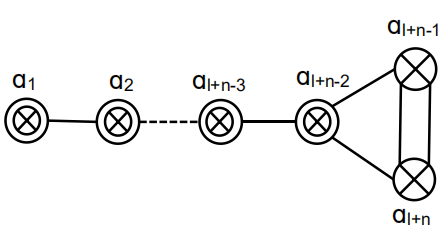}\vspace{0.5cm}
\tabularnewline
\cline{2-3} \cline{3-3} 
 & \begin{singlespace}
\noindent $\{\varepsilon_{1}^{\eta_{1}}-\varepsilon_{2}^{\eta_{2}},\dots,\varepsilon_{l+n-1}^{\bar{0}}-\varepsilon_{l+n}^{\bar{0}},\varepsilon_{l+n-1}^{\bar{0}}+\varepsilon_{l+n}^{\bar{0}}\}$
\end{singlespace}
 & \includegraphics[viewport=0bp 0bp 266bp 167.7419bp,scale=0.6]{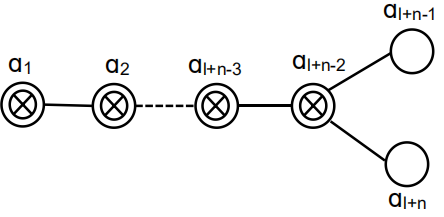}\vspace{0.5cm}
\tabularnewline
\hline 
\end{longtable}
\end{singlespace}
\begin{example}
\begin{singlespace}
\noindent For the partitions $(5,3,1|3,3)$ and $(3,3|4)$, the corresponding
Dynkin pyramids are shown in Example \ref{exa:(5,3,1|3,3)=000026(3,3|4)}.
Then the corresponding labelled Dynkin diagrams are:
\end{singlespace}

\noindent \ \ \ \ \includegraphics{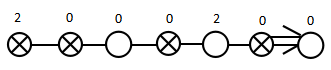} and
\includegraphics[scale=0.7]{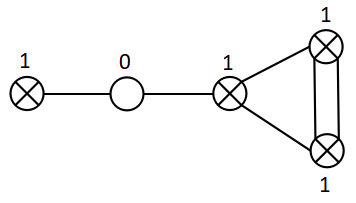} 

\noindent respectively.
\end{example}

\begin{rem}
\begin{singlespace}
\noindent Similar to Dynkin pyramid, different numberings within columns
for a ortho-symplectic Dynkin pyramid are possible and would lead
to different labelled Dynkin diagrams. In this way one can get all
labelled Dynkin diagram. The following example shows we get different
labelled Dynkin diagram if we allow different numberings within columns.
\end{singlespace}
\end{rem}

\begin{example}
For the partition $(5,3,1|3,3)$, if we follow the principle of numbering
within columns given in this subsection, the corresponding labelled
Dynkin diagram is given in Example 15. However, if we choose a different
numbering as shown below, the corresponding labelled Dynkin diagram
shown below is different from that is in Example 15.

\includegraphics[scale=0.8]{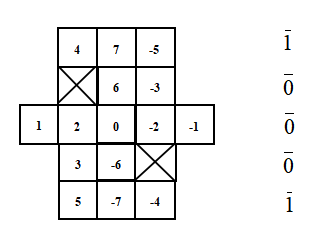}\includegraphics[scale=0.8]{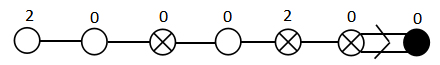}
\end{example}

\subsection{Alternative Dynkin pyramid for $\lambda$\label{subsec:Alternative-Dynkin-pyramid}}

\begin{singlespace}
\noindent In this subsection, we use an alternative notation for $\lambda$
and rewrite
\begin{equation}
\lambda=(\lambda_{1},\dots,\lambda_{a},\lambda_{a+1},\lambda_{-(a+1)},\dots,\lambda_{b},\lambda_{-b})\label{eq:jordan type group}
\end{equation}
 where $\lambda_{1},\dots,\lambda_{a}$ are the parts with odd multiplicity,
$\lambda_{1}>\lambda_{2}>\dots>\lambda_{a}$ and $\lambda_{a+1}=\lambda_{-(a+1)}\geq\dots\geq\lambda_{b}=\lambda_{-b}$.
We define $\ensuremath{\left|i\right|}\in\{\bar{0},\bar{1}\}$ such
that for $c\in\mathbb{Z}$, we have $\ensuremath{\left|\{i:\lambda_{i}=c,\ensuremath{\left|i\right|}=\bar{0}\}\right|}=\ensuremath{\left|\{j:p_{j}=c\}\right|}$
and $\left|\{i:\lambda_{i}=c,\ensuremath{\left|i\right|}=\bar{1}\}\right|=\ensuremath{\left|\{j:q_{j}=c\}\right|}$
for some $j$. Next we establish a Dynkin pyramid $\tilde{P}$ which
is different from that we used in the previous section. We use $\tilde{P}$
to determine a basis for $\mathfrak{g}^{e}$ in Subsection \ref{subsec:c-osp}.
\end{singlespace}

\begin{singlespace}
The new version of Dynkin pyramid $\tilde{P}$ consists of $(m+2n)$
boxes with size $2\times2$ in the $xy$-plane and is centred on $(0,0)$.
We label rows from $1$ to $b$ in the upper half plane which is different
from the way we used in Subsection \ref{subsec:Hoyt's-ortho-symplectic-Dynkin}.
For each $\lambda_{i}$ with $i>0$, we put $\lambda_{i}$ boxes both
into the $i$th row and $-i$th row in the columns $1-\lambda_{i},3-\lambda_{i},\dots,\lambda_{i}-1$.
We start with the parts $\lambda_{1},\dots,\lambda_{a}$. For $1\leq i\leq a$,
we cross out $\lfloor\frac{\lambda_{i}}{2}\rfloor$ boxes in the $i$th
row from left to right and cross out $\lceil\frac{\lambda_{i}}{2}\rceil$
boxes in the $-i$th row from right to left. If $\lambda_{i}$ is
odd (resp. even), we label boxes without cross in the $i$th row from
left to right with $i_{0},i_{2},\dots,i_{\lambda_{i}-1}$ (resp. $i_{1},i_{3},\dots,i_{\lambda_{i}-1}$)
and boxes without cross in the $-i$th row from left to right with
$i_{-(\lambda_{i}-1)},\dots,i_{-2}$ (resp. $i_{-(\lambda_{i}-1)},\dots,i_{-1}$).
Then we deal with the parts $\lambda_{a+1},\lambda_{-(a+1)},\dots,\lambda_{b},\lambda_{-b}$.
For $a+1\leq i\leq b$, we label boxes in the $i$th row with $i_{1-\lambda_{i}},i_{3-\lambda_{i}},\dots,i_{\lambda_{i}-1}$
and boxes in the $-i$th row are labelled by $-i_{1-\lambda_{i}},-i_{3-\lambda_{i}},\dots,-i_{\lambda_{i}-1}$.
Let $\ensuremath{\left|\mathrm{row}(i)\right|}$ be the parity of
the $i$th row such that $\ensuremath{\left|\mathrm{row}(i)\right|}=\ensuremath{\left|i\right|}$
and $\left|i\right|$ is defined in (\ref{eq:jordan type group}).

Note that from the above Dynkin pyramid $\tilde{P}$ we get a basis
\begin{equation}
\{v_{i_{j}}:i_{j}\text{ is a box in }\tilde{P}\}\label{eq:vij-alternative dynkin}
\end{equation}
 of $V$. More precisely, basis elements $\{v_{i_{j}}:1\leq i\leq a,0\leq j\leq\lambda_{i}-1\text{ for odd }\lambda_{i}\text{ and }1\leq j\leq\lambda_{i}\text{ for even }\lambda_{i}\}\cup\{v_{i_{j}}:a+1\leq i\leq b\}$
(resp. $\{v_{i_{j}}:1\leq i\leq a,1-\lambda_{i}\leq j<0\text{ for odd }\lambda_{i}\text{ and }-\lambda_{i}\leq j\leq-1\text{ for even }\lambda_{i}\}\cup\{v_{-i_{j}}:a+1\leq i\leq b\}$)
correspond to the boxes in the upper (lower) half of $\tilde{P}$.
The bilinear form $B(\ldotp,\ldotp)$ on $V$ is given by 
\[
B(v_{i_{j}},v_{k_{l}})=\begin{cases}
\pm1 & \text{if }i=\pm k,j=-l\\
0 & \text{otherwise,}
\end{cases}
\]
 and signs will be given explicitly later in (\ref{eq:c-osp-bilinear-form-1})
and (\ref{eq:c-osp-bilinear-form-2}). Note that $\tilde{P}$ also
gives a nilpotent element $e\in\mathfrak{g}_{\bar{0}}$ such that
$e$ sends $v_{i_{j}}$ to $v_{i_{j-2}}$ if there exists a box labelled
by $v_{i_{j-2}}$ and sends $v_{i_{j}}$ to zero if there no such
box.
\end{singlespace}
\begin{example}
\begin{singlespace}
\noindent The alternative Dynkin pyramid $\tilde{P}$ for the partition
$\lambda=(5,3|2^{2})$ is

\noindent 
\begin{figure}[H]
\begin{singlespace}
\noindent \begin{centering}
\includegraphics[scale=0.8]{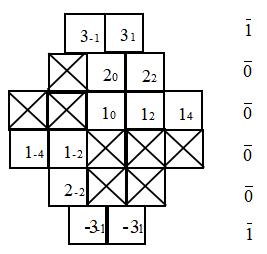}
\par\end{centering}
\end{singlespace}
\caption{Dynkin pyramid for the partition $\lambda=(5,3|2^{2})$}
\end{figure}
\end{singlespace}

\begin{singlespace}
Define $u_{i}=v_{i_{\lambda_{i}-1}}\in V$ and thus $e^{j}u_{i}=v_{i_{\lambda_{i}-2j-1}}$.
Then the vectors $e^{j}u_{i}$ with $\ensuremath{\left|i\right|}=\bar{0},\ 0\leq j\leq\lambda_{i}-1$
form a basis for $V_{\bar{0}}$ and $e^{j}u_{i}$ with $\ensuremath{\left|i\right|}=\bar{1},\ 0\leq j\leq\lambda_{i}-1$
form a basis for $V_{\bar{1}}$. Moreover, we have $e^{\lambda_{i}}u_{i}=0$
for all $i$ and they satisfy the following conditions:

(1) For $i=1,\dots,a$, we have
\begin{equation}
B(e^{k}u_{i},e^{h}u_{j})=\begin{cases}
(-1)^{k} & \text{if }i=j\text{ and }k+h=\lambda_{i}-1,\\
0, & \text{otherwise},
\end{cases}\label{eq:c-osp-bilinear-form-1}
\end{equation}
\end{singlespace}
\end{example}

\begin{singlespace}
(2) For $i=\pm(a+1),\dots,\pm b$, then there exists $\theta_{i}\in\{-1,1\}$
such that 
\begin{equation}
B(e^{k}u_{i},e^{h}u_{j})=\begin{cases}
(-1)^{k}\theta_{i} & \text{if }-i=j\text{ and }k+h=\lambda_{i}-1,\\
0, & \text{otherwise}.
\end{cases}\label{eq:c-osp-bilinear-form-2}
\end{equation}

\end{singlespace}

\subsection{Centralizer of nilpotent element $e\in\mathfrak{g}_{\bar{0}}$\label{subsec:c-osp}}

\begin{singlespace}
\noindent In this subsection, we give a basis for $\mathfrak{g}^{e}$
based on \cite[Chapter 1]{Yakimova2009} and then state an alternative
formula to the formula given in \cite[Subsection 3.2.2]{Hoyt2012}
for $\dim\mathfrak{g}^{e}$. In order to describe a basis for $\mathfrak{g}^{e}$,
we define $i^{*}=i$ for $i=1,\dots,a$ and $i^{*}=-i$ for $i=\pm(a+1),\dots,\pm b$.
\end{singlespace}

\begin{singlespace}
Recall that a basis for $\mathfrak{gl}(m|n)^{e}$ is known in terms
of $\xi_{i}^{j,k}$ such that $\xi_{i}^{j,k}$ sends $u_{i}$ to $e^{k}u_{j}$
and all other $u_{t}$ to $0$. We know that $\mathfrak{g}^{e}=\mathfrak{g}\cap\mathfrak{gl}(m|2n)^{e}$.
Therefore, the elements in a basis of $\mathfrak{g}_{\bar{0}}^{e}$
are of the form:
\end{singlespace}

\begin{singlespace}
\noindent 
\[
\xi_{i}^{i^{*},\lambda_{i}-1-k}\text{ for }0\leq k\leq\lambda_{i}-1,\ k\text{ is odd if }\ensuremath{\left|i\right|}=\bar{0}\text{ and }k\text{ is even if }\ensuremath{\left|i\right|}=\bar{1};
\]

\noindent 
\[
\xi_{i}^{j,\lambda_{j}-1-k}+\varepsilon_{i}^{j,\lambda_{j}-1-k}\xi_{j^{*}}^{i^{*},\lambda_{i}-1-k}\text{ for all }0\leq k\leq\min\{\lambda_{i},\lambda_{j}\}-1,\ensuremath{\left|i\right|}=\ensuremath{\left|j\right|}
\]
where $\varepsilon_{i}^{j,\lambda_{j}-1-k}\in\{\pm1\}$ can be determined
by using \cite[Section 3.2]{Jantzen2004}. More precisely, we have
that 
\[
\varepsilon_{i}^{j,\lambda_{j}-1-k}=(-1)^{\lambda_{j}-k}\theta_{j}\theta_{i}\text{ for all }0\leq k\leq\min\{\lambda_{i},\lambda_{j}\}-1,\ensuremath{\left|i\right|}=\ensuremath{\left|j\right|}.
\]
Elements in a basis of $\mathfrak{g}_{\bar{1}}^{e}$ are of the form
\[
\xi_{i}^{j,\lambda_{j}-1-k}\pm\xi_{j^{*}}^{i^{*},\lambda_{i}-1-k}
\]
 with appropriate choices of signs for $0\leq k\leq\min\{\lambda_{i},\lambda_{j}\}-1$,
$\ensuremath{\left|i\right|}\neq\ensuremath{\left|j\right|}$.
\end{singlespace}

\begin{singlespace}
Write $e=e_{\mathfrak{o}}+e_{\mathfrak{sp}}$ where $e_{\mathfrak{o}}\in\mathfrak{o}(m)$
and $e_{\mathfrak{sp}}\in\mathfrak{sp}(2n)$, we know that $\dim\mathfrak{g}_{\bar{0}}^{e}=\dim\mathfrak{o}(m)^{e_{\mathfrak{o}}}+\dim\mathfrak{sp}(2n)^{e_{\mathfrak{sp}}}$.
By \cite[Section 3.2]{Jantzen2004}, we have that $\dim\mathfrak{o}(m)^{e_{\mathfrak{o}}}=\frac{1}{2}\dim\mathfrak{gl}(m)^{e_{\mathfrak{o}}}-\frac{1}{2}\ensuremath{\left|\{i:\lambda_{i}\text{ is odd},\ensuremath{\left|i\right|}=\bar{0}\}\right|}$
and $\dim\mathfrak{sp}(2n)^{e_{\mathfrak{sp}}}=\frac{1}{2}\dim\mathfrak{gl}(2n)^{e_{\mathfrak{sp}}}+\frac{1}{2}\ensuremath{\left|\{i:\lambda_{i}\text{ is odd},\ensuremath{\left|i\right|}=\bar{1}\}\right|}$.
Hence, we obtain that 
\[
\dim\mathfrak{g}_{\bar{0}}^{e}=\frac{1}{2}\dim\mathfrak{gl}(m|2n)_{\bar{0}}^{e}-\frac{1}{2}\ensuremath{\left|\{i:\lambda_{i}\text{ is odd},\ensuremath{\left|i\right|}=\bar{0}\}\right|}+\frac{1}{2}\ensuremath{\left|\{i:\lambda_{i}\text{ is odd},\ensuremath{\left|i\right|}=\bar{1}\}\right|}.
\]
In addition, Hoyt argues that $\dim\mathfrak{g}_{\bar{1}}^{e}=\frac{1}{2}\dim\mathfrak{gl}(m|2n)_{\bar{1}}^{e}$
in \cite[Section 3.2.2]{Hoyt2012}. Therefore, we have that 
\begin{equation}
\dim\mathfrak{g}^{e}=\frac{1}{2}\dim\mathfrak{gl}(m|2n)^{e}-\frac{1}{2}\ensuremath{\left|\{i:\lambda_{i}\text{ is odd},\ensuremath{\left|i\right|}=\bar{0}\}\right|}+\frac{1}{2}\ensuremath{\left|\{i:\lambda_{i}\text{ is odd},\ensuremath{\left|i\right|}=\bar{1}\}\right|}.\label{eq:osp(m,2n)^e}
\end{equation}

\end{singlespace}

We obtain an alternative formula for $\dim\mathfrak{g}^{e}$ below.
\begin{prop}
Let $\mathfrak{g}=\mathfrak{g}_{\bar{0}}\oplus\mathfrak{g}_{\bar{1}}=\mathfrak{osp}(m|2n)$
and $\lambda$ be a partition of $(m|2n)$. Denote by $P$ the ortho-symplectic
Dynkin pyramid of $\lambda$. Then $P$ determines an $\mathfrak{sl}(2)$-triple
$\{e,h,f\}$ in $\mathfrak{g}_{\bar{0}}$. Let $c_{i}$ be the number
of boxes in the $i$th column of $P$ and $r_{i}$ (resp. $s_{i}$)
be the number of boxes with parity $\bar{0}$ (resp. $\bar{1}$) in
the $i$th column of $P$. We have that $\dim\mathfrak{g}^{e}=\frac{1}{2}(\sum c_{i}^{2}+\sum c_{i}c_{i+1})-\frac{r_{0}}{2}+\frac{s_{0}}{2}$.
\end{prop}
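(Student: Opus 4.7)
The plan is to combine equation (\ref{eq:osp(m,2n)^e}) with the dimension formula of Proposition \ref{prop:dim gl(m,n)^e} applied to $\mathfrak{gl}(m|2n)$, but using the $\mathbb{Z}$-grading coming from the ortho-symplectic Dynkin pyramid $P$ of $\lambda$ rather than a standard Dynkin pyramid. Once that is in place, the result reduces to a combinatorial identification of the parity-correction terms appearing in (\ref{eq:osp(m,2n)^e}) with $r_0$ and $s_0$.

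First I would observe that the proof of Proposition \ref{prop:dim gl(m,n)^e} uses only that $\{e,h,f\}$ is an $\mathfrak{sl}(2)$-triple in $\mathfrak{gl}(m|2n)_{\bar 0}$ with $h=\sum_i -\mathrm{col}(i)\,E_{i,i}$ acting with integer eigenvalues; nothing there is specific to the Dynkin pyramid of Subsection \ref{subsec:Dynkin-pyramid-A(m,n)}. Since the ortho-symplectic pyramid $P$ also produces such a triple (Subsection \ref{subsec:Hoyt's-ortho-symplectic-Dynkin}) with $\mathrm{col}$ now the column function of $P$, the same argument yields
\[
\dim \mathfrak{gl}(m|2n)^e=\dim\mathfrak{gl}(m|2n)(0)+\dim\mathfrak{gl}(m|2n)(-1)=\sum_{i\in\mathbb{Z}} c_i^2+\sum_{i\in\mathbb{Z}} c_i c_{i+1},
\]
with $c_i$ the column counts of $P$. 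Substituting into (\ref{eq:osp(m,2n)^e}) reduces the proposition to proving
\[
r_0=|\{i:\lambda_i \text{ odd},\,|i|=\bar 0\}| \quad\text{and}\quad s_0=|\{i:\lambda_i \text{ odd},\,|i|=\bar 1\}|.
\]

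I would verify these identities by row-by-row inspection of the construction of $P$. A full row of length $\ell$ centred at $0$ occupies columns $1-\ell,3-\ell,\dots,\ell-1$, so meets column $0$ precisely when $\ell$ is odd, contributing one box in the row's parity. An even skew row of length $(c_k+b_k)/2$ coming from an odd-multiplicity pair $(c_k,b_k)$ of odd parts of $p$ has columns $1-b_k,3-b_k,\dots,c_k-1$, which are all even, so it always contains column $0$; together with its centrally symmetric copy in the lower half this gives two parity-$\bar 0$ column-$0$ boxes, matching the two odd parts consumed. An odd skew row of length $a_2/2$ comes from an even part $a_2$ of $q$ (since parts of $q$ with odd multiplicity must be even), occupies odd columns $1,3,\dots,a_2-1$, and contributes nothing in column $0$. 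Finally, when $m$ is odd, the central row of length $a_1$ supplies one parity-$\bar 0$ box in column $0$ for the single odd part $a_1$. Summing over all rows produces exactly one column-$0$ box of parity $\bar 0$ for every odd part of $p$, and one of parity $\bar 1$ for every odd part of $q$, as required.

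The main technical point is the bookkeeping in the previous paragraph: tracking how each odd-multiplicity odd part of $p$ is distributed among the central row (when $m$ is odd), the unique skew pair, and the remaining $\lfloor m_{a_2}/2\rfloor$ full rows, and checking that each choice of placement contributes the predicted number of column-$0$ boxes. Once this case analysis is complete, combining it with the $\mathfrak{gl}(m|2n)^e$ formula derived above yields $\dim\mathfrak{g}^e=\tfrac{1}{2}\bigl(\sum c_i^2+\sum c_i c_{i+1}\bigr)-\tfrac{r_0}{2}+\tfrac{s_0}{2}$.
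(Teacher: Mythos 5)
Your proposal is correct, but it reaches the formula by a genuinely different route from the paper. The paper applies the grading argument of Proposition \ref{prop:dim gl(m,n)^e} directly inside $\mathfrak{osp}(m|2n)$: it computes $\dim\mathfrak{g}^{e}=\dim\mathfrak{g}(0)+\dim\mathfrak{g}(-1)$, identifies $\mathfrak{g}(0)\cong(\bigoplus_{i<0}\mathfrak{gl}(r_{i}|s_{i}))\oplus\mathfrak{osp}(r_{0}|s_{0})$ and $\mathfrak{g}(-1)\cong\bigoplus_{i<0}\mathrm{Hom}(\mathbb{C}^{c_{i}},\mathbb{C}^{c_{i+1}})$ from the basis elements $E_{k,l}+\gamma_{l,k}E_{-l,-k}$, and then uses $\dim\mathfrak{osp}(r_{0}|s_{0})=\frac{c_{0}^{2}-r_{0}+s_{0}}{2}$ together with the central symmetry $c_{i}=c_{-i}$ to assemble the answer; the identification of $r_{0}$ and $s_{0}$ with the numbers of odd parts appears there only as a closing consistency remark. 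You instead run the grading argument for $\mathfrak{gl}(m|2n)$ equipped with the ad\,$h$-grading coming from the ortho-symplectic pyramid (legitimate, since $\{e,h\}$ extends to an $\mathfrak{sl}(2)$-triple in $\mathfrak{osp}(m|2n)_{\bar{0}}\subseteq\mathfrak{gl}(m|2n)_{\bar{0}}$, so this is the Dynkin grading and the surjectivity of $\mathrm{ad}\,e$ on degrees $\geq-1$ holds by $\mathfrak{sl}(2)$-theory), and then feed the result into the half-dimension formula (\ref{eq:osp(m,2n)^e}), so that the entire content of the proposition reduces to the combinatorial identities $r_{0}=\vert\{i:\lambda_{i}\text{ odd},\,\vert i\vert=\bar{0}\}\vert$ and $s_{0}=\vert\{i:\lambda_{i}\text{ odd},\,\vert i\vert=\bar{1}\}\vert$, which you verify by a careful case analysis of full rows, skew rows and the central row. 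What your route buys is that the Lie-theoretic work is entirely outsourced to the already-established (\ref{eq:osp(m,2n)^e}) (hence ultimately to Jantzen's and Hoyt's half-dimension formulas), at the price of depending on those inputs; the paper's direct computation is self-contained within the grading picture and, as a by-product, exhibits the reductive part $\mathfrak{g}(0)$ explicitly, which is reused elsewhere. Your row-by-row verification of the column-zero counts is in fact more thorough than the one-sentence justification the paper gives for the same fact.
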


\noindent \begin{proof}Let $\mathfrak{g}=\bigoplus_{j\in\mathbb{Z}}\mathfrak{g}(j)$.
Using the same argument as in the proof of Proposition \ref{prop:dim gl(m,n)^e},
we get $\dim\mathfrak{g}^{e}=\dim\mathfrak{g}(0)+\dim\mathfrak{g}(-1)$.
For an element $E_{k,l}+\gamma_{l,k}E_{-l,-k}\in\mathfrak{g}$, we
calculate that $[h,E_{k,l}+\gamma_{l,k}E_{-l,-k}]=\left(\mathrm{col}(l)-\mathrm{col}(k)\right)(E_{k,l}+\gamma_{l,k}E_{-l,-k})$,
this implies that $E_{k,l}+\gamma_{l,k}E_{-l,-k}\in\mathfrak{g}(j)$
if $j=\mathrm{col}(l)-\mathrm{col}(k)$. Hence, we have that 
\[
\mathfrak{g}(0)=\langle E_{k,l}+\gamma_{l,k}E_{-l,-k}:\mathrm{col}(l)=\mathrm{col}(k)\rangle\cong(\bigoplus_{i<0}\mathfrak{gl}(r_{i}|s_{i}))\oplus\mathfrak{osp}(r_{0}|s_{0}).
\]
 Moreover, we know that 
\[
\dim\mathfrak{osp}(r_{0}|s_{0})=\frac{(r_{0}+s_{0})^{2}-(r_{0}+s_{0})}{2}+s_{0}=\frac{c_{0}^{2}-r_{0}+s_{0}}{2}.
\]
 Thus we have that $\dim\mathfrak{g}(0)=\sum_{i<0}c_{i}^{2}+\frac{c_{0}^{2}}{2}-\frac{r_{0}}{2}+\frac{s_{0}}{2}=\frac{1}{2}\sum c_{i}^{2}-\frac{r_{0}}{2}+\frac{s_{0}}{2}$.
Observe that 
\[
\mathfrak{g}(-1)=\langle E_{k,l}+\gamma_{l,k}E_{-l,-k}:\mathrm{col}(l)-\mathrm{col}(k)=-1\rangle\cong\bigoplus_{i<0}\mathrm{Hom}(\mathbb{C}^{c_{i}},\mathbb{C}^{c_{i+1}}).
\]
Hence, we have that $\dim\mathfrak{g}(-1)=\sum_{i<0}c_{i}c_{i+1}$. 

\noindent For each row which corresponds to an odd $\lambda_{i}$,
there must exist a box in the $0$th column. Thus we have that $r_{0}=\ensuremath{\left|\{i:\lambda_{i}\text{ is odd},\ensuremath{\left|i\right|}=\bar{0}\}\right|}$
and $s_{0}=\ensuremath{\left|\{i:\lambda_{i}\text{ is odd},\ensuremath{\left|i\right|}=\bar{1}\}\right|}$.
Therefore, we deduce that $\dim\mathfrak{g}^{e}=\frac{1}{2}(\sum c_{i}^{2}+\sum c_{i}c_{i+1})-\frac{r_{0}}{2}+\frac{s_{0}}{2}$.
\end{proof}

\subsection{Centre of centralizer of nilpotent element $e\in\mathfrak{g}{}_{\bar{0}}$
with Jordan type $\lambda$ such that all parts of $\lambda$ have
even multiplicity\label{subsec:cc-osp}}

\begin{singlespace}
\noindent We know that there is a root space decomposition $\mathfrak{g}=\mathfrak{h}\oplus\bigoplus_{\alpha\in\Phi}\mathfrak{g}_{\alpha}$
and thus $\mathfrak{z}(\mathfrak{g})\subseteq\mathfrak{g}^{\mathfrak{h}}=\mathfrak{h}$.
Construct a Dynkin pyramid $\tilde{P}$ following the rules described
in Subsection \ref{subsec:Alternative-Dynkin-pyramid}. We know that
$\tilde{P}$ determines a nilpotent element $e\in\mathfrak{g}_{\bar{0}}$
and we can embed $e$ into an $\mathfrak{sl}(2)$-triple $\{e,h,f\}\subseteq\mathfrak{g}_{\bar{0}}$
by the Jacobson--Morozov theorem. Then we have that the centralizer
$\mathfrak{h}^{e}$ of $e$ in $\mathfrak{h}$ is a Cartan subalgebra
of $\mathfrak{g}^{e}$ according to \cite[Section 3]{Brundan2006}.
Moreover, \cite[Lemma 13]{Brundan2006} shows that the set of weights
of $\mathfrak{h}^{e}$ on $\mathfrak{g}^{e}$ is equivalent to the
set of weights of $\mathfrak{h}^{e}$ on $\mathfrak{g}$. Then we
can obtain the following decomposition for $\mathfrak{g}^{e}$:
\[
\mathfrak{g}^{e}=(\mathfrak{g}^{e})^{\mathfrak{h}^{e}}\oplus\bigoplus_{\alpha\in\Phi^{e}}\mathfrak{g}_{\alpha}^{e}
\]
where $\Phi^{e}\subseteq(\mathfrak{h}^{e})^{*}$ is defined as the
set of non-zero weights of $\mathfrak{h}^{e}$ on $\mathfrak{g}^{e}$
and $\mathfrak{g}_{\alpha}^{e}=\{x\in\mathfrak{g}^{e}:[h,x]=\alpha(h)x\ \text{for all }h\in\mathfrak{h}^{e}\}$.
Hence, we have $\mathfrak{z}(\mathfrak{g}^{e})\subseteq(\mathfrak{g}^{e})^{\mathfrak{h}^{e}}.$
\end{singlespace}

\begin{singlespace}
We first consider the case where all parts of the Jordan type $\lambda=(\lambda_{1},\lambda_{-1},\dots,\lambda_{b},\lambda_{-b})$
with respect to $e$ have even multiplicity. Then we know that there
are $2b$ rows in the ortho-symplectic Dynkin pyramid $P$ and we
label rows in the upper half plane from bottom to top by $1,2,\dots,b$
and rows in the lower half plane in a symmetric way. Let $S$ be the
set spanned by the odd powers of $e$, i.e. $S=\langle e,e^{3},\dots,e^{t}:t=2\lfloor\frac{\lambda_{1}}{2}\rfloor-1\rangle$.
\end{singlespace}
\begin{thm}
\textup{\label{thm:osp even multiplicity}}Let $\mathfrak{g}=\mathfrak{osp}(m|2n)=\mathfrak{g}_{\bar{0}}\oplus\mathfrak{g}_{\bar{1}}$
and the Jordan type with respect to $e\in\mathfrak{g}_{\bar{0}}$
is \textup{$\lambda=(\lambda_{1},\lambda_{-1},\dots,\lambda_{b},\lambda_{-b})$}
such that \textup{$\lambda_{1}\geq\dots\geq\lambda_{b}$} and \textup{$\lambda_{i}=\lambda_{-i}$}.
Then \textup{$\mathfrak{z}(\mathfrak{g}^{e})=S$} except when \textup{$\lambda_{1}$}
is odd and \textup{$\lambda_{1}>\lambda_{i}$} for \textup{$i\neq\pm1$}
and \textup{$\vert1\vert=\bar{0}$}. In which case, we have that \textup{$\mathfrak{z}(\mathfrak{g}^{e})=S\oplus\langle\xi_{1}^{1,\lambda_{1}-1}-\xi_{-1}^{-1,\lambda_{1}-1}\rangle$.}
\end{thm}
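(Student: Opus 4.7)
The plan is to prove the two inclusions $S \subseteq \mathfrak{z}(\mathfrak{g}^e)$ and $\mathfrak{z}(\mathfrak{g}^e) \subseteq S$ (or the enlarged space in the exceptional case) separately. For the easy direction, I would first verify that every odd power of $e$ actually lies in $\mathfrak{g}$: iterating the defining relation $B(ev,w) = -B(v,ew)$ gives $B(e^k v, w) = (-1)^k B(v, e^k w)$, so $e^{2i+1} \in \mathfrak{osp}(V) = \mathfrak{g}$ for every $i$, while even powers of $e$ fail the sign condition and are excluded. Since $e$ centralizes every element of $\mathfrak{g}^e$ by definition, all odd powers do too, giving $S \subseteq \mathfrak{z}(\mathfrak{g}^e)$.

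For the reverse inclusion, I would use the reduction already set up in the preceding discussion: by \cite[Section 3]{Brundan2006} and \cite[Lemma 13]{Brundan2006}, $\mathfrak{h}^e$ is a Cartan subalgebra of $\mathfrak{g}^e$ and the weights of $\mathfrak{h}^e$ on $\mathfrak{g}^e$ are the restrictions of those on $\mathfrak{g}$. Hence $\mathfrak{z}(\mathfrak{g}^e) \subseteq (\mathfrak{g}^e)^{\mathfrak{h}^e}$. Because all parts of $\lambda$ have even multiplicity, we are in the ``$a=0$'' case of Subsection \ref{subsec:Alternative-Dynkin-pyramid}, so $\mathfrak{h}^e$ is spanned by the paired diagonal elements $\xi_i^{i,0} - \xi_{-i}^{-i,0}$ for $1 \leq i \leq b$. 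Using the explicit basis of $\mathfrak{g}^e$ recalled in Subsection \ref{subsec:c-osp}, the weight-zero basis vectors are exactly those indexed by $(i,j)$ with $j \in \{i, -i\}$: namely the ``diagonal'' elements $\xi_i^{i,\lambda_i-1-k} + \varepsilon_i^{i,\lambda_i-1-k}\,\xi_{-i}^{-i,\lambda_i-1-k}$ for admissible $k$, together with the ``cross'' elements $\xi_i^{-i,\lambda_i-1-k}$ for the permitted parities of $k$.

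Next I would write a generic element $x \in (\mathfrak{g}^e)^{\mathfrak{h}^e}$ as a linear combination of these weight-zero basis vectors and impose $[x,y] = 0$ for all $y$ in the explicit basis of $\mathfrak{g}^e$. The decisive tests come from bracketing $x$ with the off-diagonal basis elements $\xi_i^{j,\lambda_j-1-k} + \varepsilon\, \xi_{j^*}^{i^*,\lambda_i-1-k}$ with $j \notin \{i,-i\}$: such brackets shift a coefficient attached to the $i$-block against one attached to the $j$-block, so vanishing forces the coefficients at corresponding $k$ in different paired blocks to be equal, and a descending-$k$ argument then forces them to agree through the full length $\min\{\lambda_i,\lambda_j\}$. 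Collecting these constraints globally shows that the only surviving free parameters assemble into the elements $e, e^3, \dots, e^{2\lfloor \lambda_1/2\rfloor - 1}$, together with at most one extra degree of freedom supported on the top block.

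The main obstacle, and the source of the exceptional clause, is identifying precisely when this extra top-block degree of freedom is actually central rather than being killed by further bracket relations. The candidate extra element is $\xi_1^{1,\lambda_1-1} - \xi_{-1}^{-1,\lambda_1-1}$; to survive it needs (a) to lie in $\mathfrak{g}$, which requires a parity/sign check that selects $|1| = \bar{0}$ and $\lambda_1$ odd (so that $\lambda_1 - 1$ is even, matching the basis rule $k$ odd when $|i|=\bar 0$ in the top-degree combination), and (b) to have no off-diagonal basis vector of $\mathfrak{g}^e$ available to bracket with, which requires $\lambda_1 > \lambda_i$ for all $i \neq \pm 1$ so there is no larger or equal block to produce a nonzero commutator. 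Verifying that these three conditions together are necessary and sufficient, with a careful bookkeeping of the signs $\varepsilon_i^{j,k}$ and $\theta_i$ from (\ref{eq:c-osp-bilinear-form-1})--(\ref{eq:c-osp-bilinear-form-2}) in every remaining sub-case, is the delicate part of the proof; once it is settled, the theorem follows directly.
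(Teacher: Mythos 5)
Your overall strategy coincides with the paper's: reduce to the zero-weight space of $\mathfrak{h}^{e}=\langle \xi_{i}^{i,0}-\xi_{-i}^{-i,0}\rangle$ acting on $\mathfrak{g}^{e}$, then impose commutator conditions against explicit basis elements of $\mathfrak{g}^{e}$. However, there are two concrete problems. First, your identification of the zero-weight space is wrong: the ``cross'' elements $\xi_{i}^{-i,\lambda_{i}-1-k}$ have weight $-2\beta_{i}\neq0$ under $h_{i}=\xi_{i}^{i,0}-\xi_{-i}^{-i,0}$ (one computes $[h_{i},\xi_{i}^{-i,m}]=-2\xi_{i}^{-i,m}$), so they do not belong to $(\mathfrak{g}^{e})^{\mathfrak{h}^{e}}$; only the paired diagonal elements $\xi_{j}^{j,l}+\varepsilon_{j}^{j,l}\xi_{-j}^{-j,l}$ survive. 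Since your subsequent sketch never eliminates the cross coefficients you have wrongly admitted, the argument as written does not close.

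Second, and more seriously, the step that actually decides the theorem is missing. After restricting to $x=\sum c_{j}^{j,l}(\xi_{j}^{j,l}+\varepsilon_{j}^{j,l}\xi_{-j}^{-j,l})$, the brackets with the off-diagonal elements $\xi_{i}^{t,0}+\varepsilon_{i}^{t,0}\xi_{-t}^{-i,\lambda_{i}-\lambda_{t}}$ that you invoke only force $c_{i}^{i,l}=c_{t}^{t,l}$ across blocks; they do not by themselves rule out the \emph{even}-degree diagonal coefficients, which is exactly what separates $S$ (odd powers of $e$ only) from a larger space. The paper kills these by bracketing with $\xi_{i}^{-i,0}$ when that element lies in $\mathfrak{g}^{e}$, and with $\xi_{i}^{-i,1}$ otherwise; the degree shift in the latter case is what fails to detect the single top coefficient $c_{1}^{1,\lambda_{1}-1}$ precisely when $\lambda_{1}$ is odd, $|1|=\bar{0}$ and $\lambda_{1}>\lambda_{i}$ for $i\neq\pm1$. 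Your explanation of the exceptional clause as a condition for $\xi_{1}^{1,\lambda_{1}-1}-\xi_{-1}^{-1,\lambda_{1}-1}$ ``to lie in $\mathfrak{g}$'' is therefore off the mark: that element is always a member of the standard basis of $\mathfrak{g}^{e}$; the three hypotheses arise from the failure of the bracket tests above, and the final verification that this element genuinely commutes with the whole of $\mathfrak{g}^{e}$ (the paper's Step 4) still has to be carried out. Without supplying the $\xi_{i}^{-i,0}$/$\xi_{i}^{-i,1}$ mechanism, the proposal defers the entire content of the theorem to ``careful bookkeeping.''
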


\noindent \begin{proof}This proof will proceed in steps. It is clear
that $S\subseteq\mathfrak{z}(\mathfrak{g}^{e})$. We know that $e^{l}=\sum_{j=1}^{b}(\xi_{j}^{j,l}\pm\xi_{-j}^{-j,l})$
and $e^{l}\in\mathfrak{g}_{\bar{0}}$ for all odd $l$ with $0\leq l<\lambda_{1}$.

\noindent \textbf{Step 1}: Deduce that $\mathfrak{z}(\mathfrak{g}^{e})\subseteq\langle\xi_{j}^{j,\lambda_{j}-1-k}+\varepsilon_{j}^{j,\lambda_{j}-1-k}\xi_{-j}^{-j,\lambda_{j}-1-k}:1\leq j\leq b,0\leq k\leq\lambda_{j}-1\rangle$.

\noindent Define $h_{i}=\xi_{i}^{i,0}-\xi_{-i}^{-i,0}$ for all $0\leq i\leq b$.
Then we have $\mathfrak{h}^{e}=\langle h_{i}:0\leq i\leq b\rangle$.
We define $\beta_{i}\in(\mathfrak{h}^{e})^{*}$ by 
\[
\beta_{i}(h_{j})=\begin{cases}
1 & \text{if }i=j;\\
0 & \text{if }i\neq j,
\end{cases}
\]
and $\beta_{-i}=-\beta_{i}$ for all $i$. As in the proof of \cite[Theorem 2]{Yakimova2009},
we can calculate that $[\xi_{i}^{i,0},\xi_{j}^{t,s}]=\delta_{it}\xi_{j}^{i,s}-\delta_{ij}\xi_{i}^{t,s}.$
Thus for $h\in\mathfrak{h}^{e}$, computing the commutator between
$h$ and the basis element $\xi_{l}^{j,\lambda_{j}-1-k}+\varepsilon_{l}^{j,\lambda_{j}-1-k}\xi_{-j}^{-l,\lambda_{l}-1-k}$
( $l,j\geq0,$ $l\neq j$ ) of $\mathfrak{g}^{e}$ we obtain:
\begin{equation}
[h,\xi_{l}^{j,\lambda_{j}-1-k}+\varepsilon_{l}^{j,\lambda_{j}-1-k}\xi_{-j}^{-l,\lambda_{l}-1-k}]=(\beta_{j}-\beta_{l})(h)(\xi_{l}^{j,\lambda_{j}-1-k}+\varepsilon_{l}^{j,\lambda_{j}-1-k}\xi_{-j}^{-l,\lambda_{l}-1-k}).\label{eq:=00005Bh,1=00005D}
\end{equation}
 We also compute the commutator between $h$ and the basis element
$\xi_{l}^{-j,\lambda_{j}-1-k}+\varepsilon_{l}^{-j,\lambda_{j}-1-k}\xi_{j}^{-l,\lambda_{l}-1-k}$
( $l,j\geq0,$ $l\neq j$ ) of $\mathfrak{g}^{e}$:
\begin{equation}
[h,\xi_{l}^{-j,\lambda_{j}-1-k}+\varepsilon_{l}^{-j,\lambda_{j}-1-k}\xi_{j}^{-l,\lambda_{l}-1-k}]=(-\beta_{j}-\beta_{l})(h)(\xi_{l}^{-j,\lambda_{j}-1-k}+\varepsilon_{l}^{-j,\lambda_{j}-1-k}\xi_{j}^{-l,\lambda_{l}-1-k})\label{eq:=00005Bh,2=00005D}
\end{equation}
 and the commutator between $h$ and the basis element $\xi_{-l}^{j,\lambda_{j}-1-k}+\varepsilon_{-l}^{j,\lambda_{j}-1-k}\xi_{-j}^{l,\lambda_{l}-1-k}$
( $l,j\geq0,$ $l\neq j$ ) of $\mathfrak{g}^{e}$:
\[
[h,\xi_{-l}^{j,\lambda_{j}-1-k}+\varepsilon_{-l}^{j,\lambda_{j}-1-k}\xi_{-j}^{l,\lambda_{l}-1-k}]=(\beta_{j}+\beta_{l})(h)(\xi_{-l}^{j,\lambda_{j}-1-k}+\varepsilon_{-l}^{j,\lambda_{j}-1-k}\xi_{-j}^{l,\lambda_{l}-1-k}).
\]
Hence, the coefficient of $\xi_{l}^{j,\lambda_{j}-1-k}+\varepsilon_{l}^{j,\lambda_{j}-1-k}\xi_{-j}^{-l,\lambda_{l}-1-k}$
in an element of $(\mathfrak{g}^{e})^{\mathfrak{h}^{e}}$ can be nonzero
if and only if $(\beta_{j}-\beta_{l})(h)=0$ for all $h\in\mathfrak{h}^{e}$,
the coefficient of $\xi_{l}^{-j,\lambda_{j}-1-k}+\varepsilon_{l}^{-j,\lambda_{j}-1-k}\xi_{j}^{-l,\lambda_{l}-1-k}$
in an element of $(\mathfrak{g}^{e})^{\mathfrak{h}^{e}}$ can be nonzero
if and only if $(-\beta_{j}-\beta_{l})(h)=0$ for all $h\in\mathfrak{h}^{e}$
and the coefficient of $\xi_{-l}^{j,\lambda_{j}-1-k}+\varepsilon_{-l}^{j,\lambda_{j}-1-k}\xi_{-j}^{l,\lambda_{l}-1-k}$
in an element of $(\mathfrak{g}^{e})^{\mathfrak{h}^{e}}$ can be nonzero
if and only if $(\beta_{j}+\beta_{l})(h)=0$. Take $h=h_{j}$, we
obtain that $(\beta_{j}-\beta_{l})(h)=1$ and $(-\beta_{j}-\beta_{l})(h)=-1$
for $l\neq j$. Therefore, we deduce that 
\[
(\mathfrak{g}^{e})^{\mathfrak{h}^{e}}=\langle\xi_{j}^{j,\lambda_{j}-1-k}+\varepsilon_{j}^{j,\lambda_{j}-1-k}\xi_{-j}^{-j,\lambda_{j}-1-k}\rangle
\]
 and thus $\mathfrak{z}(\mathfrak{g}^{e})\subseteq\langle\xi_{j}^{j,\lambda_{j}-1-k}+\varepsilon_{j}^{j,\lambda_{j}-1-k}\xi_{-j}^{-j,\lambda_{j}-1-k}\rangle$.

\noindent We now have that an element $x$ in $\mathfrak{z}(\mathfrak{g}^{e})$
is of the form $\sum_{j,k}c_{j}^{j,k}(\xi_{j}^{j,\lambda_{j}-1-k}+\varepsilon_{j}^{j,\lambda_{j}-1-k}\xi_{-j}^{-j,\lambda_{j}-1-k})$
from Step 1. Fix $j$ and $k$ and let $l=\lambda_{j}-1-k$.

\noindent \textbf{Step 2}: Show that $c_{j}^{j,l}=0$ whenever $l$
is even except in one special case. 

\noindent According to Equation (1) in \cite[Section 3.2]{Jantzen2004},
we have that 
\[
\varepsilon_{j}^{j,l}=\begin{cases}
1 & \text{if }l\text{ is odd;}\\
-1 & \text{if }l\text{ is even.}
\end{cases}
\]

\noindent Next we consider an element $\xi_{i}^{-i,0}$. Note that
$\xi_{i}^{-i,0}\in\mathfrak{g}^{e}$ if $\lambda_{i}$ is even and
$\ensuremath{\left|i\right|}=\bar{0}$, or $\lambda_{i}$ is odd and
$\ensuremath{\left|i\right|}=\bar{1}$. Hence, when $\xi_{i}^{-i,0}\in\mathfrak{g}^{e}$,
the commutator between $x$ and $\xi_{i}^{-i,0}$ is:
\[
[\xi_{i}^{-i,0},\sum_{j,l}c_{j}^{j,l}(\xi_{j}^{j,l}+\varepsilon_{j}^{j,l}\xi_{-j}^{-j,l})]=\sum c_{j}^{j,l}(\xi_{j}^{-j,l}-\varepsilon_{j}^{j,l}\xi_{j}^{-j,l}).
\]
 Hence, we deduce that $c_{j}^{j,l}=0$ whenever $l$ is even. When
$\xi_{i}^{-i,0}\notin\mathfrak{g}^{e}$, i.e. $\lambda_{i}$ is odd
and $\ensuremath{\left|i\right|}=\bar{0}$, or $\lambda_{i}$ is even
and $\ensuremath{\left|i\right|}=\bar{1}$. We take commutator between
$x$ and $\xi_{i}^{-i,1}$:
\[
[\xi_{i}^{-i,1},\sum_{j,l}c_{j}^{j,l}(\xi_{j}^{j,l}+\varepsilon_{j}^{j,l}\xi_{-j}^{-j,l})]=\sum c_{j}^{j,l}(\xi_{j}^{-j,l+1}-\varepsilon_{j}^{j,l}\xi_{j}^{-j,l+1}).
\]
 Hence, we deduce that $c_{j}^{j,l}=0$ for all $l$ is even, except
when $l=\lambda_{j}-1$ and $\ensuremath{\left|i\right|}=\bar{0}$.
For $l$ even and $l=\lambda_{j}-1$, suppose that there exist some
$\lambda_{i}$ with $i>0$ such that $\lambda_{i}\geq\lambda_{j}$,
then we can compute 
\[
[\xi_{i}^{j,0}+\varepsilon_{i}^{j,0}\xi_{-j}^{-i,0},\sum_{j}c_{j}^{j,\lambda_{j}-1}(\xi_{j}^{j,\lambda_{j}-1}-\xi_{-j}^{-j,\lambda_{j}-1})]=-\sum_{j}c_{j}^{j,\lambda_{j}-1}(\varepsilon_{i}^{j,0}\xi_{-j}^{-i,\lambda_{j}-1}+\xi_{i}^{j,\lambda_{j}-1}),
\]
which implies that $c_{j}^{j,\lambda_{j}-1}=0$. Thus $\mathfrak{z}(\mathfrak{g}^{e})\subseteq\langle\xi_{j}^{j,l}+\xi_{-j}^{-j,l}:l\text{ is odd}\rangle$
except for $\lambda_{1}$ is odd with $\ensuremath{\left|1\right|}=\bar{0}$
such that $\lambda_{1}>\lambda_{i}$ for $i\neq\pm1$, in which case
we cannot show that $c_{1}^{1,\lambda_{1}-1}=0$ and $\mathfrak{z}(\mathfrak{g}^{e})\subseteq\langle\xi_{j}^{j,l}+\xi_{-j}^{-j,l}:l\text{ is odd}\rangle\oplus\langle\xi_{1}^{1,\lambda_{1}-1}-\xi_{-1}^{-1,\lambda_{1}-1}\rangle$.

\noindent \textbf{Step 3}: Show that $c_{i}^{i,l}=c_{t}^{t,l}$ for
all $i,t$ whenever $l$ is odd.

\noindent Now consider $\xi_{i}^{t,0}+\varepsilon_{i}^{t,0}\xi_{-t}^{-i,\lambda_{i}-\lambda_{t}}\in\mathfrak{g}^{e}$
with $i<t$ and $i,t>0$. Then we compute that
\begin{align*}
\left[\xi_{i}^{t,0}+\varepsilon_{i}^{t,0}\xi_{-t}^{-i,\lambda_{i}-\lambda_{t}},x\right] & =\left[\xi_{i}^{t,0}+\varepsilon_{i}^{t,0}\xi_{-t}^{-i,\lambda_{i}-\lambda_{t}},\sum_{j,l}c_{j}^{j,l}(\xi_{j}^{j,l}+\xi_{-j}^{-j,l})\right]\\
 & =\sum_{l}(c_{i}^{i,l}-c_{t}^{t,l})\xi_{i}^{t,l}+\sum_{l}(c_{t}^{t,l}-c_{i}^{i,l})\varepsilon_{i}^{t,0}\xi_{-t}^{-i,\lambda_{i}-\lambda_{t}+l}.
\end{align*}
 This equals to zero if and only if $c_{i}^{i,l}=c_{t}^{t,l}$ for
all $i$ and $t$. Hence, $\mathfrak{z}(\mathfrak{g}^{e})\subseteq S$
and therefore $\mathfrak{z}(\mathfrak{g}^{e})=S$ except for $\lambda_{1}>\lambda_{i}$
for $i\neq\pm1$ and $\ensuremath{\left|1\right|}=\bar{0}$.

\noindent \textbf{Step 4}: Show that $\xi_{1}^{1,\lambda_{1}-1}-\xi_{1^{*}}^{1^{*},\lambda_{1}-1}\in\mathfrak{z}(\mathfrak{g}^{e})$
when $\lambda_{1}$ is odd with $\ensuremath{\left|1\right|}=\bar{0}$
such that $\lambda_{1}>\lambda_{i}$ for $i\neq\pm1$.

\noindent Suppose $\lambda_{1}$ is odd with $\ensuremath{\left|1\right|}=\bar{0}$
and $l=\lambda_{1}-1$. Suppose $\lambda_{1}>\lambda_{i}$ for $i\neq\pm1$.
Clearly $\xi_{1}^{1,\lambda_{1}-1}-\xi_{-1}^{-1,\lambda_{1}-1}$ commutes
with all basis elements in $\mathfrak{g}^{e}$ of the form $\xi_{i}^{-i,\lambda_{i}-1-k}$
for $i\neq\pm1$ and $\xi_{i}^{j,\lambda_{j}-1-k}\pm\xi_{-j}^{-i,\lambda_{i}-1-k}$
for $i,j\neq\pm1$.

\noindent It remains to check whether $\xi_{1}^{1,\lambda_{1}-1}-\xi_{-1}^{-1,\lambda_{1}-1}$
commutes with $\xi_{1}^{-1,\lambda_{1}-1-k}$, $\xi_{-1}^{1,\lambda_{1}-1-k}$
for $0\leq k\leq\lambda_{1}-1$, $k$ is odd, $\xi_{1}^{j,\lambda_{j}-1-k}\pm\xi_{-j}^{-1,\lambda_{1}-1-k}$,
$\xi_{-1}^{j,\lambda_{j}-1-k}\pm\xi_{-j}^{1,\lambda_{1}-1-k}$ for
$0\leq k\leq\lambda_{j}-1$ and $\xi_{i}^{1,\lambda_{1}-1-k}\pm\xi_{-1}^{-i,\lambda_{i}-1-k}$,
$\xi_{i}^{-1,\lambda_{1}-1-k}\pm\xi_{1}^{-i,\lambda_{i}-1-k}$ for
$0\leq k\leq\lambda_{i}-1$. Note that 
\[
[\xi_{1}^{1,\lambda_{1}-1}-\xi_{-1}^{-1,\lambda_{1}-1},\xi_{1}^{-1,\lambda_{1}-1-k}]=-2\xi_{1}^{-1,2\lambda_{1}-2-k}
\]
and 
\[
[\xi_{1}^{1,\lambda_{1}-1}-\xi_{-1}^{-1,\lambda_{1}-1},\xi_{1}^{-1,\lambda_{1}-1-k}]=2\xi-_{1}^{1,2\lambda_{1}-2-k}.
\]
 We know that $\xi_{1}^{-1,2\lambda_{1}-2-k}=0$ and $\xi-_{1}^{1,2\lambda_{1}-2-k}=0$
because $2\lambda_{1}-2-k\geq\lambda_{1}$. Similarly we can compute
that $[\xi_{1}^{1,\lambda_{1}-1}-\xi_{-1}^{-1,\lambda_{1}-1},\xi_{1}^{j,\lambda_{j}-1-k}\pm\xi_{-j}^{-1,\lambda_{1}-1-k}]=0$,
$[\xi_{1}^{1,\lambda_{1}-1}-\xi_{-1}^{-1,\lambda_{1}-1},\xi_{-1}^{j,\lambda_{j}-1-k}\pm\xi_{-j}^{1,\lambda_{1}-1-k}]=0$,
$[\xi_{1}^{1,\lambda_{1}-1}-\xi_{-1}^{-1,\lambda_{1}-1},\xi_{i}^{1,\lambda_{1}-1-k}\pm\xi_{-1}^{-i,\lambda_{i}-1-k}]=0$
and $[\xi_{1}^{1,\lambda_{1}-1}-\xi_{-1}^{-1,\lambda_{1}-1},\xi_{i}^{-1,\lambda_{1}-1-k}\pm\xi_{1}^{-i,\lambda_{i}-1-k}]=0$.
Hence, we have that $\xi_{1}^{1,\lambda_{1}-1}-\xi_{-1}^{-1,\lambda_{1}-1}$
commutes with all basis elements in $\mathfrak{g}^{e}$. Therefore,
we have that $\xi_{1}^{1,\lambda_{1}-1}-\xi_{-1}^{-1,\lambda_{1}-1}\in\mathfrak{z}(\mathfrak{g}^{e})$
in this case and $\mathfrak{z}(\mathfrak{g}^{e})=S\oplus\langle\xi_{1}^{1,\lambda_{1}-1}-\xi_{-1}^{-1,\lambda_{1}-1}\rangle$.
\end{proof}

\subsection{Centre of centralizer of nilpotent element $e\in\mathfrak{g}{}_{\bar{0}}$
with Jordan type $\lambda$ such that all parts of $\lambda$ have
multiplicity one}

\begin{singlespace}
Next we consider the case where all parts of the Jordan type $\lambda=(\lambda_{1},\dots,\lambda_{a})$
of $e$ have multiplicity one, which implies $\lambda_{i}$ is odd
for $\ensuremath{\left|i\right|}=\bar{0}$ and $\lambda_{i}$ is even
for $\ensuremath{\left|i\right|}=\bar{1}$. Note that when $m=0$
or $n=0$, then $\mathfrak{g}$ is either an orthogonal or symplectic
Lie algebra, a basis of $\mathfrak{z}(\mathfrak{g}^{e})$ has been
given in \cite[Theorem 4]{Yakimova2009}. Recall that $S=\langle e,e^{3},\dots,e^{t}:t=2\lfloor\frac{\lambda_{1}}{2}\rfloor-1\rangle$.
Below we give a general result for $\mathfrak{z}(\mathfrak{g}^{e})$.
\end{singlespace}
\begin{thm}
\begin{singlespace}
\textup{\label{Thm: osp multiplicity 1}}Let $\mathfrak{g}=\mathfrak{g}_{\bar{0}}\oplus\mathfrak{g}_{\bar{1}}=\mathfrak{osp}(m|2n)$
and let $e\in\mathfrak{g}_{\bar{0}}$ be nilpotent with a partition
$\lambda=(\lambda_{1},\dots,\lambda_{a})$ which is defined in \textup{(\ref{eq:jordan type group})}
and $\lambda_{1}>\dots>\lambda_{a}$.

\ \ \ \ (1) If $m=0$ or $n=0$, we have $\mathfrak{z}(\mathfrak{g}^{e})=S$
except when $n=0$, $\lambda_{2}>\lambda_{3}$ and both $\lambda_{1}$
and $\lambda_{2}$ are odd, in which case we have $\mathfrak{z}(\mathfrak{g}^{e})=S\oplus\langle\xi_{1}^{2,\lambda_{2}-1}-\xi_{2}^{1,\lambda_{1}-1}\rangle$;

\ \ \ \ (2) If $m,n\neq0$, we have $\mathfrak{z}(\mathfrak{g}^{e})=S$
except when $a\geq3$, $\ensuremath{\left|1\right|}=\ensuremath{\left|2\right|}=\bar{0}$,
or $a=2$ and $\ensuremath{\left|1\right|}\neq\ensuremath{\left|2\right|}$,
in which cases we have $\mathfrak{z}(\mathfrak{g}^{e})=S\oplus\langle\xi_{1}^{2,\lambda_{2}-1}-\xi_{2}^{1,\lambda_{1}-1}\rangle$. 
\end{singlespace}
\end{thm}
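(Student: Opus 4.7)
The strategy mirrors that of Theorem \ref{thm:osp even multiplicity}, but one key simplification is missing. Since every part of $\lambda$ has multiplicity one, each $u_i$-chain is self-paired by $B$; combining the antisymmetry condition defining $\mathfrak{osp}$ with the requirement that a diagonal element be constant along each chain forces $\mathfrak{h}^e = 0$. Thus I cannot first reduce to $(\mathfrak{g}^e)^{\mathfrak{h}^e}$ as in Step~1 there; instead I would work directly with the explicit basis of $\mathfrak{g}^e$ recorded in Subsection \ref{subsec:c-osp}, using the $\mathrm{ad}\,h$-grading of $\mathfrak{g}^e$ to work one degree at a time. It is immediate that $S \subseteq \mathfrak{z}(\mathfrak{g}^e)$, since $e$ commutes with every odd power of itself, so the task is to show every element of $\mathfrak{z}(\mathfrak{g}^e)$ lies in $S$ plus, in the exceptional cases, the single extra line.

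I would write a general $x \in \mathfrak{z}(\mathfrak{g}^e)$ as a sum of basis elements $\xi_i^{i, \lambda_i - 1 - k}$ and $\xi_i^{j, \lambda_j - 1 - k} + \varepsilon_i^{j, \lambda_j - 1 - k}\xi_j^{i, \lambda_i - 1 - k}$ with $i \neq j$, and then bracket with carefully chosen basis elements of $\mathfrak{g}^e$ to force most coefficients to vanish. The commutators with the ``transposition'' elements $\xi_i^{j, 0} + \varepsilon\, \xi_j^{i, \lambda_i - \lambda_j}$ play exactly the role of Step~3 in the proof of Theorem \ref{thm:osp even multiplicity}: they force the diagonal coefficients $c_{i,i,l}$ to agree across chains, so that after subtracting a suitable element of $S$ the diagonal part can be taken to vanish, and they kill the off-diagonal coefficients of every pair $(i, j)$ except at the highest degree involving the top pair $(1, 2)$. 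This pair survives only because $\xi_2^{1, \lambda_1 - 1}$ sits at the absolute top of the $u_1$-chain, so any further bracket with it involves $e^k$ with $k \geq \lambda_1$ and vanishes for degree reasons.

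It remains to decide exactly when the leftover candidate $\xi_1^{2, \lambda_2 - 1} - \xi_2^{1, \lambda_1 - 1}$ is actually central. The cases stated in the theorem are precisely those in which no further basis element of $\mathfrak{g}^e$ produces a nonzero bracket with it: when $a = 2$ and the parities $|1| \neq |2|$ force the relevant commutators to degenerate; when $a \geq 3$ with $|1| = |2| = \bar{0}$, so that every would-be detector involves sign combinations from the $\varepsilon_i^{j, \cdot}$ that cancel; and, in the pure orthogonal subcase ($n = 0$), the additional requirements $\lambda_2 > \lambda_3$ and $\lambda_1, \lambda_2$ odd, needed because without odd-parity chains there are extra potential detectors that must be ruled out. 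In each claimed exceptional case I would verify centrality directly, as in Step~4 of the proof of Theorem \ref{thm:osp even multiplicity}: every bracket of the extra element against a basis vector produces a term $\xi_{\bullet}^{\bullet, k}$ with $k \geq \lambda_1$ and therefore vanishes. The main obstacle I anticipate is the case-by-case bookkeeping of signs $\varepsilon_i^{j, \cdot}$ and row-parities, especially in the low-rank subcase $a = 2$ where the absence of a third chain both removes the standard elimination tool and opens up the exceptional centre, and in the pure orthogonal subcase where the constraints must be phrased in terms of $\lambda_3$ rather than parities.
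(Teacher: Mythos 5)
Your overall strategy coincides with the paper's: for $m,n\neq 0$ one works directly with the explicit basis of $\mathfrak{g}^{e}$ from Subsection \ref{subsec:c-osp}, eliminates coefficients of a general element by bracketing against chosen basis vectors, and then verifies directly that the single surviving candidate $\xi_{1}^{2,\lambda_{2}-1}-\xi_{2}^{1,\lambda_{1}-1}$ is central in the exceptional cases. Your opening observation that $\mathfrak{h}^{e}=0$ when all parts have multiplicity one (so that the reduction to $(\mathfrak{g}^{e})^{\mathfrak{h}^{e}}$ used in Step 1 of Theorem \ref{thm:osp even multiplicity} gives no information) is correct and is implicitly why the paper switches to a direct computation here. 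For part (1) the paper simply invokes \cite[Theorem 4]{Yakimova2009} rather than rederiving the orthogonal/symplectic case, which is a legitimate shortcut you could also take.

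Two concrete gaps remain. First, you assign to the transposition elements $\xi_{i}^{j,0}+\varepsilon_{i}^{j,0}\xi_{j}^{i,\lambda_{i}-\lambda_{j}}$ both the job of equalizing the diagonal coefficients and the job of killing all off-diagonal coefficients; the paper instead first brackets with the diagonal elements $\xi_{t}^{t,1}$, which cleanly forces $c_{i}^{j,k}=0$ for all $k>0$, and only then uses the transposition elements to dispose of the $k=0$ coefficients and to equalize the diagonal ones. Your single-tool version is not obviously sufficient and at minimum needs the $k>0$ elimination spelled out. Second, and more seriously, in the subcase $a=2$, $\left|1\right|\neq\left|2\right|$ the entire question of whether the extra element is central reduces to the one identity
\[
[\xi_{1}^{2,2n-1}-\xi_{2}^{1,m-1},\ \xi_{1}^{2,0}+\varepsilon_{1}^{2,0}\xi_{2}^{1,m-2n}]=(\varepsilon_{1}^{2,0}-1)\xi_{1}^{1,m-1},
\]
so one must actually prove $\varepsilon_{1}^{2,0}=1$; the paper does this by evaluating $B((\xi_{1}^{2,0}+\varepsilon_{1}^{2,0}\xi_{2}^{1,m-2n})u_{1},e^{2n-1}u_{2})$ in two ways using (\ref{eq:c-osp-bilinear-form-1}) and (\ref{eq:c-osp-bilinear-form-2}). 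You name this sign bookkeeping as the anticipated obstacle but do not carry it out, and since the truth of the $a=2$ exceptional case turns precisely on the value of that sign, the argument is incomplete without it.
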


\begin{singlespace}
\noindent \begin{proof}For $m=0$ or $n=0$, then $\mathfrak{g}=\mathfrak{sp}(2n)$
or $\mathfrak{so}(m)$, the detailed proof can be found in \cite[Theorem 4]{Yakimova2009}. 

\noindent For $m,n\neq0$, it is clear that $S\subseteq\mathfrak{z}(\mathfrak{g}^{e})$.
We know that $e^{l}=\sum_{i=1}^{a}\xi_{i}^{i,l}$ and $e^{l}\in\mathfrak{g}$
for all odd $l$ and $0\leq l\leq\lambda_{1}-1$. Note that a basis
of $\mathfrak{g}^{e}$ contains elements of the form:
\[
\xi_{i}^{i,k}\text{ for all }1\leq i\leq a\text{ and odd }k\text{ with }0<k\leq\lambda_{i}-1,
\]
\[
\xi_{i}^{j,\lambda_{j}-1-k}+\varepsilon_{i}^{j,\lambda_{j}-1-k}\xi_{j}^{i,\lambda_{i}-1-k}\text{ for all }1\leq i<j\leq a,\ 0\leq k\leq\lambda_{j}-1
\]
where $\varepsilon_{i}^{j,\lambda_{j}-1-k}\in\{\pm1\}$ can be determined.
Thus an element $x\in\mathfrak{z}(\mathfrak{g}^{e})$ is of the form
\[
x=\sum_{i,k}c_{i}^{i,k}\xi_{i}^{i,k}+\sum_{i,j,k}c_{i}^{j,k}(\xi_{i}^{j,\lambda_{j}-1-k}+\varepsilon_{i}^{j,\lambda_{j}-1-k}\xi_{j}^{i,\lambda_{i}-1-k})
\]
 where $c_{i}^{j,k}\in\mathbb{C}$ are coefficients. 

\noindent Assume $a\geq3$. For $1\leq t\leq a$, we have that $\xi_{t}^{t,1}$
commutes with $\sum_{i,k}c_{i}^{i,k}\xi_{i}^{i,k}$. By taking the
commutator between $\xi_{t}^{t,1}$ and $x$ we obtain that 
\begin{align*}
[\xi_{t}^{t,1},x] & =\sum_{i<t}\sum_{k=0}^{\lambda_{t}-1}c_{i}^{t,k}(\xi_{i}^{t,\lambda_{t}-k}-\varepsilon_{i}^{t,\lambda_{t}-1-k}\xi_{t}^{i,\lambda_{i}-k})\\
 & +\sum_{t<i}\sum_{k=0}^{\lambda_{i}-1}c_{t}^{i,k}(\varepsilon_{t}^{i,\lambda_{i}-1-k}\xi_{i}^{t,\lambda_{t}-k}-\xi_{t}^{i,\lambda_{i}-k}).
\end{align*}
 This is equal to $0$ for all $t$ if and only if $c_{i}^{j,k}=0$
for all $0<k\leq\lambda_{j}-1$. Now we have that $x=\sum_{i,k}c_{i}^{i,k}\xi_{i}^{i,k}+\sum_{i,j}c_{i}^{j,0}(\xi_{i}^{j,\lambda_{j}-1}+\varepsilon_{i}^{j,\lambda_{j}-1}\xi_{j}^{i,\lambda_{i}-1})$. 

\noindent For $1\leq l<h\leq a$, by taking the commutator between
$\xi_{l}^{h,0}+\varepsilon_{l}^{h,0}\xi_{h}^{l,\lambda_{l}-\lambda_{h}}$
and $x$ we obtain that
\begin{align*}
[\xi_{l}^{h,0}+\varepsilon_{l}^{h,0}\xi_{h}^{l,\lambda_{l}-\lambda_{h}},x] & =\sum_{k}(c_{l}^{l,k}-c_{h}^{h,k})(\xi_{l}^{h,k}-\varepsilon_{l}^{h,0}\xi_{h}^{l,\lambda_{l}-\lambda_{h}+k})\\
 & +\sum_{i}c_{i}^{l,0}(\xi_{i}^{h,\lambda_{l}-1}\pm\varepsilon_{l}^{h,0}\varepsilon_{i}^{j,\lambda_{j}-1}\xi_{h}^{i,\lambda_{l}-\lambda_{h}+\lambda_{i}-1})\\
 & +\sum_{j}c_{l}^{j,0}(\varepsilon_{i}^{j,\lambda_{j}-1}\xi_{j}^{h,\lambda_{l}-1}\pm\varepsilon_{l}^{h,0}\xi_{h}^{j,\lambda_{l}-\lambda_{h}+\lambda_{j}-1})\\
 & +\sum_{i}c_{i}^{h,0}(\varepsilon_{l}^{h,0}\xi_{i}^{l,\lambda_{l}-1}\pm\varepsilon_{i}^{j,\lambda_{j}-1}\xi_{l}^{i,\lambda_{i}-1})\\
 & +\sum_{j}c_{h}^{j,0}(\varepsilon_{l}^{h,0}\varepsilon_{i}^{j,\lambda_{j}-1}\xi_{j}^{l,\lambda_{l}-1}\pm\xi_{l}^{j,\lambda_{j}-1}).
\end{align*}
 Note that $\xi_{i}^{h,\lambda_{l}-1}=\xi_{j}^{h,\lambda_{l}-1}=\xi_{h}^{i,\lambda_{l}-\lambda_{h}+\lambda_{i}-1}=\xi_{h}^{j,\lambda_{l}-\lambda_{h}+\lambda_{j}-1}=0$
since $\lambda_{l}>\lambda_{h}$. Hence, we have that 
\begin{align}
[\xi_{l}^{h,0}+\varepsilon_{l}^{h,0}\xi_{h}^{l,\lambda_{l}-\lambda_{h}},x] & =\sum_{k}(c_{l}^{l,k}-c_{h}^{h,k})(\xi_{l}^{h,k}-\varepsilon_{l}^{h,0}\xi_{h}^{l,\lambda_{l}-\lambda_{h}+k})\nonumber \\
 & +\sum_{i}c_{i}^{h,0}(\varepsilon_{l}^{h,0}\xi_{i}^{l,\lambda_{l}-1}\pm\varepsilon_{i}^{j,\lambda_{j}-1}\xi_{l}^{i,\lambda_{i}-1})\nonumber \\
 & +\sum_{j}c_{h}^{j,0}(\varepsilon_{l}^{h,0}\varepsilon_{i}^{j,\lambda_{j}-1}\xi_{j}^{l,\lambda_{l}-1}\pm\xi_{l}^{j,\lambda_{j}-1}).\label{eq:proof of mul one}
\end{align}
This is equal to $0$ if and only if $c_{l}^{l,k}=c_{h}^{h,k}$ for
all $1\leq l<h\leq a$ and $c_{i}^{j,0}=0$ for all $1\leq i<j\leq a$
except when $(i,j)=(1,2)$ and $(\ensuremath{\left|1\right|},\ensuremath{\left|2\right|})=(\bar{0},\bar{0})$.
When $\ensuremath{\left|1\right|}=\ensuremath{\left|2\right|}=\bar{0}$,
the commutator between $\xi_{l}^{h,0}+\varepsilon_{l}^{h,0}\xi_{h}^{l,\lambda_{l}-\lambda_{h}}$
and $\xi_{1}^{2,\lambda_{2}-1}-\xi_{2}^{1,\lambda_{1}-1}$ gives terms
$\varepsilon_{l}^{2,0}\xi_{1}^{l,\lambda_{l}-1}+\xi_{l}^{1,\lambda_{1}-1}-\varepsilon_{l}^{1,0}\xi_{2}^{l,\lambda_{l}-1}-\xi_{l}^{2,\lambda_{2}-1}$
and we check this is equal to zero for $l<h$. Hence, when $a\geq3$,
we have that $\mathfrak{z}(\mathfrak{g}^{e})\subseteq S$ and thus
$\mathfrak{z}(\mathfrak{g}^{e})=S$ except when $\ensuremath{\left|1\right|}=\ensuremath{\left|2\right|}=\bar{0}$,
in which case we have $\mathfrak{z}(\mathfrak{g}^{e})\subseteq S\oplus\left\langle \xi_{1}^{2,\lambda_{2}-1}-\xi_{2}^{1,\lambda_{1}-1}\right\rangle $.

\noindent Next suppose $a\geq3$ and $\ensuremath{\left|1\right|}=\ensuremath{\left|2\right|}=\bar{0}$,
we know that $\xi_{1}^{2,\lambda_{2}-1}-\xi_{2}^{1,\lambda_{1}-1}$
commutes with all basis elements in $\mathfrak{g}_{\bar{0}}^{e}$
by \cite[Theorem 4]{Yakimova2009}. Hence, it remains to check that
$\xi_{1}^{2,\lambda_{2}-1}-\xi_{2}^{1,\lambda_{1}-1}$ commutes with
basis elements $\xi_{i}^{j,\lambda_{j}-1-k}\pm\xi_{j}^{i,\lambda_{i}-1-k}$
for $\ensuremath{\left|i\right|}=\bar{0},\ensuremath{\left|j\right|}=\bar{1}$
and $0\leq k\leq\min\{\lambda_{i},\lambda_{j}\}-1$. Computing
\begin{align}
[\xi_{1}^{2,\lambda_{2}-1}-\xi_{2}^{1,\lambda_{1}-1},\xi_{i}^{j,\lambda_{j}-1-k}\pm\xi_{j}^{i,\lambda_{i}-1-k}]=\pm\xi_{j}^{2,\lambda_{2}-1+\lambda_{1}-1-k}\label{eq:osp proof 2}\\
\pm\xi_{j}^{1,\lambda_{1}-1+\lambda_{2}-1-k}-\xi_{1}^{j,\lambda_{2}-1+\lambda_{j}-1-k}+\xi_{2}^{j,\lambda_{1}-1+\lambda_{j}-1-k}.\nonumber 
\end{align}
 We have that all terms in (\ref{eq:osp proof 2}) are equal to $0$
for $0\leq k\leq\min\{\lambda_{i},\lambda_{j}\}-1$ as $\xi_{h}^{l,r}=0$
for all $h,l$ and $r>\lambda_{l}-1$. Hence, we have that $\xi_{1}^{2,\lambda_{2}-1}-\xi_{2}^{1,\lambda_{1}-1}\in\mathfrak{z}(\mathfrak{g}^{e})$
and $\mathfrak{z}(\mathfrak{g}^{e})=S\oplus\left\langle \xi_{1}^{2,\lambda_{2}-1}-\xi_{2}^{1,\lambda_{1}-1}\right\rangle $
in this case.

\noindent When $a=2$ and $\ensuremath{\left|1\right|}\neq\ensuremath{\left|2\right|}$,
i.e. the Jordan type of $e$ is $(m,2n)$ such that $m$ is odd. Assume
that $m\geq2n$, in this case a basis of $\mathfrak{g}^{e}$ only
contains elements of the form:
\[
\xi_{1}^{1,k}\text{ for }k\text{ is odd},1\leq k\leq m-1;\ \xi_{2}^{2,k}\text{ for }k\text{ is odd},\ 1\leq k\leq2n-1;
\]
\[
\text{and }\xi_{1}^{2,2n-1-k}+\varepsilon_{1}^{2,2n-1-k}\xi_{2}^{1,m-1-k}\text{ for }0\leq k\leq2n-1,\varepsilon_{1}^{2,2n-1-k}\in\{-1,1\}.
\]
By applying the similar argument to that used in the case $r+s\geq3$,
we get that $c_{1}^{1,k}=c_{2}^{2,k}$ and $c_{1}^{2,k}=0$ for all
$1\leq k\leq2n-1$. The only remaining element to check is $\xi_{1}^{2,2n-1}-\xi_{2}^{1,m-1}$.
Note that $\xi_{1}^{2,2n-1}-\xi_{2}^{1,m-1}$ commutes with $\xi_{1}^{1,k}$
and $\xi_{2}^{2,k}$ for all $k$ is odd. The element $\xi_{1}^{2,2n-1}-\xi_{2}^{1,m-1}$
also commutes with $\xi_{1}^{2,2n-1-k}+\varepsilon_{1}^{2,2n-1-k}\xi_{2}^{1,m-1-k}$
for all $k=0,1,\dots,2n-2$. Thus we only need to check that $[\xi_{1}^{2,2n-1}-\xi_{2}^{1,m-1},\xi_{1}^{2,0}+\varepsilon_{1}^{2,0}\xi_{2}^{1,m-2n}]=0$.
We calculate 
\begin{align}
[\xi_{1}^{2,2n-1}-\xi_{2}^{1,m-1},\xi_{1}^{2,0}+\varepsilon_{1}^{2,0}\xi_{2}^{1,m-2n}] & =\varepsilon_{1}^{2,0}\xi_{2}^{2,m-1}-\xi_{1}^{1,m-1}-\xi_{2}^{2,m-1}+\varepsilon_{1}^{2,0}\xi_{1}^{1,m-1}\label{eq:two element mul one}\\
 & =(\varepsilon_{1}^{2,0}-1)\xi_{1}^{1,m-1}\nonumber 
\end{align}

\noindent since $\xi_{2}^{2,m-1}=0$ as $\xi_{2}^{2,k}=0$ for $k>2n-1$. 

\noindent Next we want to know the value of $\varepsilon_{1}^{2,0}$.
Let $\mathfrak{osp}(m|2n)=\mathfrak{osp}(V)$ and $V=V_{\bar{0}}\oplus V_{\bar{1}}$.
Then we know that there exist $u_{1},u_{2}\in V$ such that $u_{1},eu_{1},\dots,e^{m-1}u_{1}$
(resp. $u_{2},eu_{2},\dots,e^{2n-1}u_{2}$) is a basis for $V_{\bar{0}}$
(resp. $V_{\bar{1}}$) according to Subsection \ref{subsec:c-osp}.
Moreover, we know that $B(u_{1},e^{m-1}u_{1})=1$ and $B(u_{2},e^{2n-1}u_{2})=1$
by equations (\ref{eq:c-osp-bilinear-form-1}) and (\ref{eq:c-osp-bilinear-form-2}).
By using equation (\ref{eq:osp(V)}), we have that 
\[
B((\xi_{1}^{2,0}+\varepsilon_{1}^{2,0}\xi_{2}^{1,m-2n})u_{1},e^{2n-1}u_{2})=(u_{2},e^{2n-1}u_{2})=1
\]
and 
\begin{align*}
B((\xi_{1}^{2,0}+\varepsilon_{1}^{2,0}\xi_{2}^{1,m-2n})u_{1},e^{2n-1}u_{2}) & =B(u_{1},(\xi_{1}^{2,0}+\varepsilon_{1}^{2,0}\xi_{2}^{1,m-2n})e^{2n-1}u_{2})\\
 & =B(u_{1},\varepsilon_{1}^{2,0}e^{m-1}u_{1})=\varepsilon_{1}^{2,0}.
\end{align*}
Therefore, we obtain that $\varepsilon_{1}^{2,0}=1$. Therefore, we
have that $[\xi_{1}^{2,2n-1}-\xi_{2}^{1,m-1},\xi_{1}^{2,0}+\varepsilon_{1}^{2,0}\xi_{2}^{1,m-2n}]=0$
and thus $\xi_{1}^{2,2n-1}-\xi_{2}^{1,m-1}\in\mathfrak{z}(\mathfrak{g}^{e})$. 

\noindent Therefore, we deduce that $\xi_{1}^{2,2n-1}-\xi_{2}^{1,m-1}\in\mathfrak{z}(\mathfrak{g}^{e})$
and $\mathfrak{z}(\mathfrak{g}^{e})=S\oplus\langle\xi_{1}^{2,\lambda_{2}-1}-\xi_{2}^{1,\lambda_{1}-1}\rangle$
as required. 

\noindent When $m<2n$, we obtain same result by applying a similar
argument. \end{proof}
\end{singlespace}

\subsection{Centre of centralizer of general nilpotent element $e\in\mathfrak{g}_{\bar{0}}$}

\begin{singlespace}
\noindent For Lie superalgebras $\mathfrak{g}=\mathfrak{osp}(m|2n)$,
we already know the construction of $\mathfrak{z}(\mathfrak{g}^{e})$
if: (1) the Jordan type of $e$ has all parts with even multiplicity;
(2) the Jordan type of $e$ has all parts with multiplicity $1$.
Now we want to use Theorems \ref{thm:osp even multiplicity} and \ref{Thm: osp multiplicity 1}
to deduce a basis of $\mathfrak{z}(\mathfrak{g}^{e})$ for a general
nilpotent element $e$. 
\end{singlespace}

\begin{singlespace}
Let $e\in\mathfrak{g}_{\bar{0}}$ be a nilpotent element with Jordan
type $\lambda$ as denoted in (\ref{eq:jordan type group}). Take
a Dynkin pyramid $\tilde{P}$ following Subsection \ref{subsec:Alternative-Dynkin-pyramid}
and $\{v_{i_{j}}\}$ in (\ref{eq:vij-alternative dynkin}) form a
basis for $V$ with respect to $\tilde{P}$. Now we write $V=V_{1}\oplus V_{2}$
where $\{v_{i_{j}}:1\leq i\leq a\}$ form a basis for $V_{1}$ and
$\{v_{i_{j}},v_{-i_{j}}:a+1\leq i\leq b\}$ form a basis for $V_{2}$.
Define $\mathfrak{g'=g}_{1}\oplus\mathfrak{g}_{2}$ where $\mathfrak{g}_{1}=\mathfrak{osp}(V_{1})$
and $\mathfrak{g}_{2}=\mathfrak{osp}(V_{2})$. Then the nilpotent
element $e\in\mathfrak{g}_{\bar{0}}$ can also be written as $e=e_{1}+e_{2}$
and $e_{i}\in\mathfrak{osp}(V_{i})$ such that the Jordan type of
$e_{1}$ has all parts with multiplicity $1$ and the Jordan type
of $e_{2}$ has all parts with even multiplicity. That is to say,
$(\lambda_{1},\dots,\lambda_{a})$ is the Jordan type of $e_{1}$
in descending order and $(\lambda_{a+1},\lambda_{-\left(a+1\right)},\dots,\lambda_{b},\lambda_{-b})$
is the Jordan type of $e_{2}$. 

Now consider a Cartan subalgebra $\mathfrak{h}$ of $\mathfrak{g}$,
we know that a basis for $\mathfrak{h}^{e}$ is $\{h_{i}=\xi_{i}^{i,0}-\xi_{-i}^{-i,0}:i=a+1,\dots,b\}$.
Define $U_{i}=\mathrm{Span}\{v_{i_{j}}:i_{j}\text{ is a box in }P\}$
for $1\leq i\leq a$ and $U_{i}=U_{i}^{+}\oplus U_{i}^{-}$ for $a+1\leq i\leq b$
where $U_{i}^{+}=\mathrm{Span}\{v_{i_{j}}:i_{j}\text{ is a box in }P\}$
and $U_{i}^{-}=\mathrm{Span}\{v_{-i_{j}}:-i_{j}\text{ is a box in }P\}$.
We also define $U_{i}^{\bot}=\left\{ v\in V:(v,u)=0\text{ for any }u\in U_{i}\right\} $.
Then we have that $\mathfrak{g}^{h_{i}}\cong\mathfrak{gl}\left(U_{i}^{+}\right)\oplus\mathfrak{osp}\left(U_{i}^{\bot}\right)$
where elements of $\mathfrak{gl}(U_{i}^{+})$ can be viewed as elements
of $\mathfrak{osp}(U_{i}^{+}\oplus U_{i}^{-})$. Let $H=\sum_{i=a+1}^{b}h_{i}$,
with the above basis of $V$ we have that $H$ is of the form 
\[
H=\begin{pmatrix}1\\
 & \ddots\\
 &  & 1\\
 &  &  & 0\\
 &  &  &  & \ddots\\
 &  &  &  &  & 0\\
 &  &  &  &  &  & -1\\
 &  &  &  &  &  &  & \ddots\\
 &  &  &  &  &  &  &  & -1
\end{pmatrix},
\]
Hence, we have that 
\[
\mathfrak{g}^{H}\cong\mathfrak{gl}(\bigoplus_{i=a+1}^{b}U_{i}^{+})\oplus\mathfrak{osp}((\bigoplus_{i=1}^{a}U_{i})\subseteq\mathfrak{g}'.
\]
Therefore, we have that $\mathfrak{z}(\mathfrak{g}^{e})\subseteq\mathfrak{g}^{H}\subseteq\mathfrak{g}'$.
Let $x\in\mathfrak{z}(\mathfrak{g}^{e})$, then $x\in(\mathfrak{g}')^{e}$
and $\left[x,y\right]=0$ for all $y\in\mathfrak{g}^{e}$. Since $(\mathfrak{g}')^{e}\subseteq\mathfrak{g}^{e}$,
we have that $[x,y]=0$ for all $y\in(\mathfrak{g}')^{e}$. Therefore,
we deduce that $\mathfrak{z}(\mathfrak{g}^{e})\subseteq\mathfrak{z}((\mathfrak{g}')^{e})=\mathfrak{z}(\mathfrak{g}_{1}^{e_{1}})\oplus\mathfrak{z}(\mathfrak{g}_{2}^{e_{2}})$.
\end{singlespace}

Now we are ready to combine Theorems \ref{thm:osp even multiplicity}
and \ref{Thm: osp multiplicity 1} to obtain a basis for $\mathfrak{z}(\mathfrak{g}^{e})$.
\begin{cor}
\label{cor:centre osp}Suppose that $\mathfrak{g}=\mathfrak{g}_{\bar{0}}\oplus\mathfrak{g}_{\bar{1}}=\mathfrak{osp}(m|2n)$
and $e=e_{1}+e_{2}$ is given by a partition $\lambda=(\lambda_{1},\dots,\lambda_{a},\lambda_{a+1},\lambda_{-(a+1)},\dots,\lambda_{b},\lambda_{-b})$.
Let $S=\langle e^{k}:k\text{ is odd and }1\leq k\leq\max\{\lambda_{1},\lambda_{a+1}\}-1\rangle$.
Then $\mathfrak{z}(\mathfrak{g}^{e})=S$ except for two special cases:

Case 1: If $a\geq3$, $\lambda_{2}>\lambda_{a+1}$, $\ensuremath{\left|1\right|}=\ensuremath{\left|2\right|}=\bar{0}$
or $a=2,\ensuremath{\left|1\right|}\neq\ensuremath{\left|2\right|}$,
then we have that $\mathfrak{z}(\mathfrak{g}^{e})=S\oplus\langle\xi_{1}^{2,\lambda_{2}-1}-\xi_{2}^{1,\lambda_{1}-1}\rangle$;

Case 2: If $\lambda_{1}<\lambda_{a+1}$, $\lambda_{a+1}>\lambda_{a+2}$,
$\ensuremath{\left|a+1\right|}=\bar{0}$ and $\lambda_{a+1}$ is odd,
we have that $\mathfrak{z}(\mathfrak{g}^{e})=S\oplus\langle\xi_{a+1}^{a+1,\mu_{a+1}-1}-\xi_{-(a+1)}^{-(a+1),\mu_{a+1}-1}\rangle$.
\end{cor}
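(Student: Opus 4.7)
The plan is to deduce the corollary from the two decomposition theorems above, namely Theorem \ref{thm:osp even multiplicity} (all parts of $\lambda$ have even multiplicity) and Theorem \ref{Thm: osp multiplicity 1} (all parts have multiplicity one), using the containment
\[
\mathfrak{z}(\mathfrak{g}^{e}) \;\subseteq\; \mathfrak{z}(\mathfrak{g}_{1}^{e_{1}}) \oplus \mathfrak{z}(\mathfrak{g}_{2}^{e_{2}})
\]
established in the paragraph immediately preceding the statement. Since Theorems \ref{thm:osp even multiplicity} and \ref{Thm: osp multiplicity 1} give explicit bases for the two summands on the right, the task is reduced to checking, for each basis vector produced there, whether it also commutes with the ``mixed'' basis elements of $\mathfrak{g}^{e}$ that involve indices from both $\{1,\dots,a\}$ and $\{\pm(a+1),\dots,\pm b\}$.

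The first step is to show the containment $S \subseteq \mathfrak{z}(\mathfrak{g}^{e})$. Because $e = e_{1} + e_{2}$ acts block-diagonally with respect to the decomposition $V = V_{1} \oplus V_{2}$, we have $e^{k} = e_{1}^{k} + e_{2}^{k}$ for every $k$, and odd powers up to $\max\{\lambda_{1},\lambda_{a+1}\}-1$ lie in $\mathfrak{g}$. Combining the fact that $e_{1}^{k} \in \mathfrak{z}(\mathfrak{g}_{1}^{e_{1}})$ and $e_{2}^{k} \in \mathfrak{z}(\mathfrak{g}_{2}^{e_{2}})$ (by the two preceding theorems) with the fact that basis elements of $\mathfrak{g}^{e}$ sending $V_{i}$ into $V_{j}$ for $i \neq j$ commute with any element of $\mathfrak{g}^{e}$ of the form $\sum_{i}\xi_{i}^{i,k}+\sum_{j}\xi_{\pm j}^{\pm j,k}$, one concludes $e^{k}$ is central in $\mathfrak{g}^{e}$. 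Next, I would enumerate the possible ``extra'' elements coming from the two theorems: the candidates are $\xi_{1}^{2,\lambda_{2}-1}-\xi_{2}^{1,\lambda_{1}-1}$ from Theorem \ref{Thm: osp multiplicity 1} applied to $\mathfrak{g}_{1}^{e_{1}}$, and $\xi_{a+1}^{a+1,\lambda_{a+1}-1}-\xi_{-(a+1)}^{-(a+1),\lambda_{a+1}-1}$ from Theorem \ref{thm:osp even multiplicity} applied to $\mathfrak{g}_{2}^{e_{2}}$.

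The second step is to decide in which cases each candidate extra element actually lies in $\mathfrak{z}(\mathfrak{g}^{e})$. The basis elements of $\mathfrak{g}^{e}$ that lie in neither $\mathfrak{g}_{1}^{e_{1}}$ nor $\mathfrak{g}_{2}^{e_{2}}$ are precisely those of the form $\xi_{i}^{j,\lambda_{j}-1-k} + \varepsilon_{i}^{j,\lambda_{j}-1-k}\xi_{j^{*}}^{i^{*},\lambda_{i}-1-k}$ with exactly one of $i,j$ in $\{1,\dots,a\}$ and the other in $\{\pm(a+1),\dots,\pm b\}$, and such a mixed element exists if and only if $\min\{\lambda_{i},\lambda_{j}\} > 0$. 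I would compute the bracket of the candidate extra element against a general such mixed basis vector and read off the condition for vanishing. For $\xi_{1}^{2,\lambda_{2}-1}-\xi_{2}^{1,\lambda_{1}-1}$ the vanishing condition (via the kind of calculation carried out in equation \eqref{eq:osp proof 2}) becomes $\lambda_{2} > \lambda_{a+1}$ together with the parity condition $|1|=|2|=\bar0$, exactly as in Case 1 (the degenerate subcase $a=2$, $|1|\neq|2|$ being handled by the second half of Theorem \ref{Thm: osp multiplicity 1}). For $\xi_{a+1}^{a+1,\lambda_{a+1}-1}-\xi_{-(a+1)}^{-(a+1),\lambda_{a+1}-1}$ the vanishing condition requires every index $j \in \{1,\dots,a\}$ to satisfy $\lambda_{j}<\lambda_{a+1}$ and $\lambda_{a+1}>\lambda_{a+2}$, together with $|a+1|=\bar 0$ and $\lambda_{a+1}$ odd, giving Case 2.

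The third step is to rule out all remaining candidates. Here the parity/oddness constraints of Theorems \ref{thm:osp even multiplicity} and \ref{Thm: osp multiplicity 1} already eliminate most possibilities in the two summands; for the survivors one checks, again by bracketing against a suitable mixed basis vector, that the hypothesis fails and the candidate does not commute with all of $\mathfrak{g}^{e}$. In particular, when both Case 1 and Case 2 hypotheses would simultaneously apply, the relative inequality $\lambda_{2}>\lambda_{a+1}$ versus $\lambda_{1}<\lambda_{a+1}$ is contradictory, so the two cases are mutually exclusive and no simultaneous extra contribution occurs. The main obstacle I anticipate is the bookkeeping of signs $\varepsilon_{i}^{j,\lambda_{j}-1-k}$ appearing in the mixed basis vectors when $V_{1}$ and $V_{2}$ are interchanged; these signs are governed by \eqref{eq:sign} together with the formula $\varepsilon_{i}^{j,\lambda_{j}-1-k} = (-1)^{\lambda_{j}-k}\theta_{j}\theta_{i}$ from Subsection \ref{subsec:c-osp}, so the argument is essentially a careful case-by-case sign check rather than any new conceptual input. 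Once all brackets are verified, combining everything with the preliminary containment $\mathfrak{z}(\mathfrak{g}^{e})\subseteq \mathfrak{z}(\mathfrak{g}_{1}^{e_{1}})\oplus\mathfrak{z}(\mathfrak{g}_{2}^{e_{2}})$ yields the stated description of $\mathfrak{z}(\mathfrak{g}^{e})$.
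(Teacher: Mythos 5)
Your plan follows the paper's own proof essentially step for step: both rest on the containment $\mathfrak{z}(\mathfrak{g}^{e})\subseteq\mathfrak{z}(\mathfrak{g}_{1}^{e_{1}})\oplus\mathfrak{z}(\mathfrak{g}_{2}^{e_{2}})$ established just before the statement, identify the two candidate extra vectors supplied by Theorems \ref{thm:osp even multiplicity} and \ref{Thm: osp multiplicity 1}, and decide their survival by bracketing against the mixed basis elements $\xi_{i}^{j,\lambda_{j}-1-k}\pm\xi_{-j}^{i,\lambda_{i}-1-k}$ with $1\leq i\leq a<j\leq b$, arriving at the same conditions. The one point to tighten in execution is that centrality must be tested on general linear combinations rather than basis vectors one at a time: it is the bracket with $\xi_{1}^{a+1,0}+\varepsilon_{1}^{a+1,0}\xi_{-(a+1)}^{1,\lambda_{1}-\lambda_{a+1}}$ forcing the coefficients of $e_{1}^{k}$ and $e_{2}^{k}$ to coincide that pins $\mathfrak{z}(\mathfrak{g}^{e})$ down to $S$ in the generic case, exactly as in the paper's computation with the coefficients $a_{k},b_{k}$.
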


\begin{singlespace}
\noindent \begin{proof}It is clear that $S\subseteq\mathfrak{z}(\mathfrak{g}^{e})$.
We first consider the case that $\mathfrak{z}(\mathfrak{g}_{1}^{e_{1}})=\mathrm{Span}\{e_{1}^{k}:k\text{ is odd and }1\leq k\leq\lambda_{1}-1\}$
and $\mathfrak{z}(\mathfrak{g}_{2}^{e_{2}})=\mathrm{Span}\{e_{2}^{k}:k\text{ is odd and }1\leq k\leq\lambda_{a+1}-1\}$.
According to Theorem \ref{thm:osp even multiplicity} and \ref{Thm: osp multiplicity 1},
we have that elements in a basis of $\mathfrak{z}((\mathfrak{g}')^{e})$
can be written as:
\[
\sum_{t=1}^{a}\xi_{t}^{t,k}\text{ for }k\text{ is odd and }1\leq k\leq\lambda_{1}-1;
\]
\[
\sum_{t=a+1}^{b}(\xi_{t}^{t,k}+\xi_{-t}^{-t,k})\text{ for }k\text{ is odd and }1\leq k\leq\lambda_{a+1}-1.
\]
Thus an element $x\in\mathfrak{z}(\mathfrak{g}^{e})$ is of the form
\begin{equation}
x=\sum_{k\text{ is odd;}k=1}^{\lambda_{1}-1}a_{k}\left(\sum_{t=1}^{a}\xi_{t}^{t,k}\right)+\sum_{k\text{ is odd;}k=1}^{\lambda_{a+1}-1}b_{k}\left(\sum_{t=a+1}^{b}(\xi_{t}^{t,k}+\xi_{-t}^{-t,k})\right)\label{eq:x-in-centre-osp}
\end{equation}
for coefficient $a_{k},b_{k}\in\mathbb{C}$. We assume that $\lambda_{1}\geq\lambda_{a+1}$,
then take the commutator with $\xi_{1}^{a+1,0}+\varepsilon_{1}^{a+1,0}\xi_{-(a+1)}^{1,\lambda_{1}-\lambda_{a+1}}$:
\[
[\xi_{1}^{a+1,0}+\varepsilon_{1}^{a+1,0}\xi_{-(a+1)}^{1,\lambda_{1}-\lambda_{a+1}},x]=\sum_{k\text{ is odd;}k=1}^{\lambda_{a+1}-1}\left(a_{k}-b_{k}\right)\left(\xi_{1}^{a+1,k}\pm\xi_{-(a+1)}^{1,\lambda_{1}-\lambda_{a+1}+k}\right).
\]
This is equal to $0$ if and only if $a_{k}=b_{k}$ for all $k$ is
odd and $1\leq k\leq\lambda_{a+1}-1$. If $\lambda_{1}<\lambda_{a+1}$,
then by taking the commutator between $\xi_{1}^{a+1,\lambda_{a+1}-\lambda_{1}}+\varepsilon_{1}^{a+1,\lambda_{a+1}-\lambda_{1}}\xi_{-(a+1)}^{1,0}$
and $x$, we obtain that $a_{k}=b_{k}$ for all $k$ is odd and $1\leq k\leq\mu_{1}-1$.
Hence, we have that $\mathfrak{z}(\mathfrak{g}^{e})\subseteq S$ for
this case and thus $\mathfrak{z}(\mathfrak{g}^{e})=S$. Therefore,
we have that $\dim\mathfrak{z}(\mathfrak{g}^{e})=\lceil\frac{\max\left\{ \lambda_{1},\lambda_{a+1}\right\} -1}{2}\rceil$
for this case.

\noindent Now we look at the special cases:

\noindent For the special case when $\xi_{1}^{2,\lambda_{2}-1}-\xi_{2}^{1,\lambda_{1}-1}\in\mathfrak{z}(\mathfrak{g}_{1}^{e_{1}})$
and $\xi_{a+1}^{a+1,\lambda_{a+1}-1}-\xi_{-(a+1)}^{-(a+1),\lambda_{a+1}-1}\notin\mathfrak{z}(\mathfrak{g}_{2}^{e_{2}})$,
then an element $y\in\mathfrak{z}(\mathfrak{g}^{e})$ is of the form
$y=x+c_{1}^{2,\lambda_{2}-1}(\xi_{1}^{2,\lambda_{2}-1}-\xi_{2}^{1,\lambda_{1}-1})$
where $x$ is defined in (\ref{eq:x-in-centre-osp}) and $c_{1}^{2,\lambda_{2}-1}\in\mathbb{C}$
is the coefficient. By calculating $[y,\xi_{1}^{a+1,0}+\varepsilon_{1}^{a+1,0}\xi_{-(a+1)}^{1,\lambda_{1}-\lambda_{a+1}}]=0$
for $\lambda_{1}>\lambda_{a+1}$ and $[y,\xi_{1}^{a+1,\lambda_{a+1}-\lambda_{1}}+\varepsilon_{1}^{a+1,\lambda_{a+1}-\lambda_{1}}\xi_{-(a+1)}^{1,0}]=0$
for $\lambda_{1}\leq\lambda_{a+1}$, we obtain that $a_{k}=b_{k}$
for all $k$ is odd and $c_{1}^{2,\lambda_{2}-1}=0$ when $\lambda_{1}\leq\lambda_{a+1}$.
If $\lambda_{2}\leq\lambda_{a+1}<\lambda_{1}$, then computing
\[
[x+c_{1}^{2,\lambda_{2}-1}(\xi_{1}^{2,\lambda_{2}-1}-\xi_{2}^{1,\lambda_{1}-1}),\xi_{2}^{a+1,\lambda_{a+1}-\lambda_{2}}\pm\xi_{-(a+1)}^{2,0}]=0
\]
implies that $c_{1}^{2,\lambda_{2}-1}=0$. However, if $\lambda_{2}>\lambda_{a+1}$,
we calculate the commutator between $\xi_{1}^{2,\lambda_{2}-1}-\xi_{2}^{1,\lambda_{1}-1}$
and basis elements of the form $\xi_{i}^{j,\lambda_{j}-1-k}\pm\xi_{-j}^{i,\lambda_{i}-1-k}$
for $1\leq i\leq a$, $a+1\leq j\leq b$,$0\leq k\leq\min\{\lambda_{i},\lambda_{j}\}$:
\begin{align}
[\xi_{1}^{2,\lambda_{2}-1}-\xi_{2}^{1,\lambda_{1}-1},\xi_{i}^{j,\lambda_{j}-1-k}\pm\xi_{-j}^{i,\lambda_{i}-1-k}] & =\pm\xi_{-j}^{2,\lambda_{2}-1+\lambda_{1}-1-k}\pm\xi_{-j}^{1,\lambda_{1}-1+\lambda_{2}-1-k}\label{eq:osp general}\\
 & \ \ \pm\xi_{1}^{j,\lambda_{j}-1-k+\lambda_{2}-1}\pm\xi_{2}^{j,\lambda_{j}-1-k+\lambda_{1}-1}.\nonumber 
\end{align}
We have that all terms in (\ref{eq:osp general}) are equal to $0$
for $0\leq k\leq\min\{\lambda_{i},\lambda_{j}\}$. This implies that
$\xi_{1}^{2,\lambda_{2}-1}-\xi_{2}^{1,\lambda_{1}-1}$ commutes with
all other basis elements in $\mathfrak{g}^{e}$. Therefore, we deduce
that $\mathfrak{z}(\mathfrak{g}^{e})=S\oplus\langle\xi_{1}^{2,\lambda_{2}-1}-\xi_{2}^{1,\lambda_{1}-1}\rangle$
in this case and $\dim\mathfrak{z}(\mathfrak{g}^{e})=\lceil\frac{\max\left\{ \lambda_{1},\lambda_{a+1}\right\} -1}{2}\rceil+1$.

\noindent For the special case when $\xi_{1}^{2,\lambda_{2}-1}-\xi_{2}^{1,\lambda_{1}-1}\notin\mathfrak{z}(\mathfrak{g}_{1}^{e_{1}})$
and $\xi_{a+1}^{a+1,\lambda_{a+1}-1}-\xi_{-(a+1)}^{-(a+1),\lambda_{a+1}-1}\in\mathfrak{z}(\mathfrak{g}_{2}^{e_{2}})$,
then an element $z\in\mathfrak{z}(\mathfrak{g}^{e})$ is of the form
$z=x+c_{a+1}^{a+1,\lambda_{a+1}-1}(\xi_{a+1}^{a+1,\lambda_{a+1}-1}-\xi_{-(a+1)}^{-(a+1),\lambda_{a+1}-1})$
where $x$ is defined in (\ref{eq:x-in-centre-osp}) and $c_{a+1}^{a+1,\lambda_{a+1}-1}\in\mathbb{C}$
is the coefficient. If $\lambda_{1}\geq\lambda_{a+1}$, then computing
$[x,\xi_{1}^{a+1,0}+\varepsilon_{1}^{a+1,0}\xi_{-(a+1)}^{1,\lambda_{1}-\lambda_{a+1}}]=0$
gives that $a_{k}=b_{k}$ for all $k$ is odd and $1\leq k\leq\lambda_{a+1}-1$
and $c_{a+1}^{a+1,\lambda_{a+1}-1}=0$. However, if $\lambda_{1}<\lambda_{a+1}$,
computing $[x,\xi_{1}^{a+1,\lambda_{a+1}-\lambda_{1}}+\varepsilon_{1}^{a+1,\lambda_{a+1}-\lambda_{1}}\xi_{-(a+1)}^{1,0}]=0$
gives that $a_{k}=b_{k}$ for all $k$ is odd and $1\leq k\leq\lambda_{1}-1$.
It remains to check that $\xi_{a+1}^{a+1,\lambda_{a+1}-1}-\xi_{-(a+1)}^{-(a+1),\lambda_{a+1}-1}\in\mathfrak{z}(\mathfrak{g}^{e})$.
It is obvious that $\xi_{a+1}^{a+1,\lambda_{a+1}-1}-\xi_{-(a+1)}^{-(a+1),\lambda_{a+1}-1}$
commutes with all elements of the form $\xi_{i}^{j,\lambda_{j}-1-k}\pm\xi_{-j}^{i,\lambda_{i}-1-k}$
for $1\leq i,j\leq a$ or $a+1\leq i,j\leq b$. We now calculate commutators
between $\xi_{a+1}^{a+1,\lambda_{a+1}-1}-\xi_{-(a+1)}^{-(a+1),\lambda_{a+1}-1}$
and $\xi_{i}^{j,\lambda_{j}-1-k}\pm\xi_{-j}^{i,\lambda_{i}-1-k}$
for $1\leq i\leq a$, $a+1\leq j\leq b$, $1\leq k\leq\min$\{$\lambda_{i},\lambda_{j}$\}:
\begin{align}
[\xi_{a+1}^{a+1,\lambda_{a+1}-1}-\xi_{-(a+1)}^{-(a+1),\lambda_{a+1}-1},\xi_{i}^{j,\lambda_{j}-1-k}\pm\xi_{-j}^{i,\lambda_{i}-1-k}]=\xi_{i}^{a+1,2\lambda_{a+1}-2-k}\label{eq:osp general 2}\\
\pm\xi_{-(a+1)}^{i,\lambda_{a+1}-1+\lambda_{i}-1-k}.\nonumber 
\end{align}
 We have that all terms in (\ref{eq:osp general 2}) are equal to
$0$. This implies that $\xi_{a+1}^{a+1,\lambda_{a+1}-1}-\xi_{-(a+1)}^{-(a+1),\lambda_{a+1}-1}$
commutes with all other basis elements in $\mathfrak{g}^{e}$. Therefore,
we deduce that $\xi_{a+1}^{a+1,\lambda_{a+1}-1}-\xi_{-(a+1)}^{-(a+1),\lambda_{a+1}-1}\in\mathfrak{z}(\mathfrak{g}^{e})$
in this case and $\dim\mathfrak{z}(\mathfrak{g}^{e})=\lceil\frac{\max\left\{ \lambda_{1},\lambda_{a+1}\right\} -1}{2}\rceil+1$.

\noindent Moreover, when $\xi_{1}^{2,\lambda_{2}-1}-\xi_{2}^{1,\lambda_{1}-1}\in\mathfrak{z}(\mathfrak{g}_{1}^{e_{1}})$
and $\xi_{a+1}^{a+1,\lambda_{a+1}-1}-\xi_{-(a+1)}^{-(a+1),\lambda_{a+1}-1}\in\mathfrak{z}(\mathfrak{g}_{2}^{e_{2}})$,
applying a similar argument to above we also have that $\mathfrak{z}(\mathfrak{g}^{e})=S\oplus\langle\xi_{1}^{2,\lambda_{2}-1}-\xi_{2}^{1,\lambda_{1}-1}\rangle$
for $\lambda_{2}>\lambda_{a+1}$ and $\mathfrak{z}(\mathfrak{g}^{e})=S\oplus\langle\xi_{a+1}^{a+1,\lambda_{a+1}-1}-\xi_{-(a+1)}^{-(a+1),\lambda_{a+1}-1}\rangle$
for $\lambda_{1}<\lambda_{a+1}$. \end{proof} 
\end{singlespace}

\subsection{Adjoint action on $\mathfrak{osp}(m|2n)$\label{subsec:Adjoint-action-of-osp}}

\begin{singlespace}
\noindent Recall that $G=\mathrm{O}_{m}(\mathbb{C})\times\mathrm{Sp}_{2n}(\mathbb{C})$
and $e\in\mathfrak{g}_{\bar{0}}$ be nilpotent. Recall that the adjoint
action of $G$ on $\mathfrak{g}$ is given by $g\cdot x=gxg^{-1}$
for all $g\in G$, $x\in\mathfrak{g}$. Then the centralizer $G^{e}$
of $e$ in $G$ acts on $\mathfrak{g}^{e}$ by the adjoint action.
Consider $(\mathfrak{g}^{e})^{G^{e}}$ which is defined to be $(\mathfrak{g}^{e})^{G^{e}}=\{x\in\mathfrak{g}^{e}:gxg^{-1}=x\text{ for all }g\in G^{e}\}$.
The restriction of the adjoint action of $G^{e}$ to $(\mathfrak{g}^{e})^{G^{e}}$
is the trivial action, so $\mathrm{Ad}_{\mid(\mathfrak{g}^{e})^{G^{e}}}:G^{e}\rightarrow\mathrm{GL}((\mathfrak{g}^{e})^{G^{e}})$
is the trivial map and thus $\mathrm{ad}_{\mid(\mathfrak{g}^{e})^{G^{e}}}:\mathfrak{g}^{e}\rightarrow\mathfrak{gl}((\mathfrak{g}^{e})^{G^{e}})$
is the trivial map. Hence, the adjoint action of $\mathfrak{g}^{e}$
on $(\mathfrak{g}^{e})^{G^{e}}$ is trivial. Therefore, we have that
$(\mathfrak{g}^{e})^{G^{e}}\subseteq\mathfrak{z}(\mathfrak{g}^{e})$.
Since $\mathfrak{z}(\mathfrak{g}^{e})$ is stable under any automorphism
of $\mathfrak{g}^{e}$, it is stable under the adjoint action of $G^{e}$.
Take any $x\in\mathfrak{z}(\mathfrak{g}^{e})$, $g\in G^{e}$ and
$y\in\mathfrak{g}^{e}$, then $[g\cdotp x,y]=[g\cdotp x,g\cdot(g^{-1}\cdot y)]=g\cdotp[x,g^{-1}\cdotp y]$.
Since $g^{-1}\cdotp y\in\mathfrak{g}^{e}$, we have that $[g\cdotp x,y]=0$
as required. Therefore, we deduce that $(\mathfrak{g}^{e})^{G^{e}}=\left(\mathfrak{z}(\mathfrak{g}^{e})\right)^{G^{e}}$.
\end{singlespace}

\begin{singlespace}
According to \cite[Section 3.12]{Jantzen2004}, we have that $G^{e}$
is the semidirect product of the subgroup $C^{e}$ and the connected
normal subgroup $R^{e}$, i.e. $G^{e}\cong C^{e}\ltimes R^{e}$. Denote
the connected component of $G^{e}$ (resp. $C^{e}$) containing the
identity by $(G^{e})^{\circ}$ (resp. $(C^{e})^{\circ}$), we have
that $G^{e}/(G^{e})^{\circ}\cong C^{e}/(C^{e})^{\circ}$ based on
\cite[Section 3.13]{Jantzen2004}. 
\end{singlespace}
\begin{thm}
Let $\mathfrak{g}=\mathfrak{g}_{\bar{0}}\oplus\mathfrak{g}_{\bar{1}}=\mathfrak{osp}(m|2n)$
and $G=\mathrm{O}_{m}(\mathbb{C})\times\mathrm{Sp}_{2n}(\mathbb{C})$.
Let $e=e_{1}+e_{2}\in\mathfrak{g}_{\bar{0}}$ be nilpotent with Jordan
type $\lambda$ denoted as in (\ref{eq:jordan type group}). Let $S=\langle e^{l}:l\text{ is odd and }l\leq\max\{\lambda_{1},\lambda_{a+1}\}-1\rangle$,
then $\left(\mathfrak{z}(\mathfrak{g}^{e})\right)^{G^{e}}=S$.
\end{thm}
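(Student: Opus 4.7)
The plan is to combine the trivial inclusion $S \subseteq (\mathfrak{z}(\mathfrak{g}^{e}))^{G^{e}}$ with Corollary \ref{cor:centre osp}. Since any $g \in G^{e}$ satisfies $geg^{-1} = e$ and therefore $g \cdot e^{l} = e^{l}$ for all odd $l$, the span $S$ is automatically contained in the invariants; on the other hand $(\mathfrak{z}(\mathfrak{g}^{e}))^{G^{e}} \subseteq \mathfrak{z}(\mathfrak{g}^{e})$, which by Corollary \ref{cor:centre osp} is either equal to $S$ or exceeds $S$ by a single extra generator in two prescribed exceptional cases. Thus the theorem will reduce to exhibiting, in each exceptional case, a single element of $G^{e}$ that fails to fix the extra generator.

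For the first exceptional case ($a \geq 3$ with $\ensuremath{\left|1\right|} = \ensuremath{\left|2\right|} = \bar{0}$, or $a = 2$ with $\ensuremath{\left|1\right|} \neq \ensuremath{\left|2\right|}$), the extra generator is $x = \xi_{1}^{2,\lambda_{2}-1} - \xi_{2}^{1,\lambda_{1}-1}$. I would consider the first Jordan block $V_{1} = \mathrm{span}\{e^{k}u_{1} : 0 \leq k \leq \lambda_{1}-1\}$, which by (\ref{eq:c-osp-bilinear-form-1}) carries a non-degenerate restriction of $B$; its orthogonal complement $V_{1}^{\perp}$ is $e$-stable and $B$-orthogonal to $V_{1}$. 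Defining $g$ to act as $-\mathrm{id}$ on $V_{1}$ and as $\mathrm{id}$ on $V_{1}^{\perp}$ produces an involution in $G$ (since $-\mathrm{id}$ is an isometry of any bilinear form) that commutes with $e$, so $g \in G^{e}$. Using $\xi_{i}^{j,k}(e^{l}u_{t}) = \delta_{it}e^{l+k}u_{j}$ one checks $g\xi_{1}^{2,\lambda_{2}-1}g^{-1} = -\xi_{1}^{2,\lambda_{2}-1}$ and likewise for the other term, hence $g \cdot x = -x \neq x$.

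For the second exceptional case ($\lambda_{1} < \lambda_{a+1}$, $\lambda_{a+1} > \lambda_{a+2}$, $\ensuremath{\left|a+1\right|} = \bar{0}$, and $\lambda_{a+1}$ odd), the extra generator is $x = \xi_{a+1}^{a+1,\lambda_{a+1}-1} - \xi_{-(a+1)}^{-(a+1),\lambda_{a+1}-1}$, and the two paired orthogonal blocks $V_{\pm(a+1)}$ have common odd size $\lambda_{a+1}$. I would let $g$ act as the $e$-equivariant swap $e^{k}u_{a+1} \leftrightarrow e^{k}u_{-(a+1)}$ on $V_{a+1} \oplus V_{-(a+1)}$ and as the identity on the orthogonal complement of this sum. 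Checking that $g$ preserves $B$ reduces via (\ref{eq:c-osp-bilinear-form-2}) to the observation that whenever $k + h = \lambda_{a+1} - 1$ the integers $k$ and $h$ share a parity (because $\lambda_{a+1} - 1$ is even), so the pairing values $(-1)^{k}\theta_{a+1}$ and $(-1)^{h}\theta_{a+1}$ agree after the swap. Hence $g \in G^{e}$, and conjugation interchanges the two summands of $x$, yielding $g \cdot x = -x \neq x$.

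The principal obstacle is verifying that the candidate involutions truly lie in $G = \mathrm{O}_{m}(\mathbb{C}) \times \mathrm{Sp}_{2n}(\mathbb{C})$ rather than in $\mathrm{GL}(V)$ only. In the first case this is automatic from $-\mathrm{id}$ preserving any bilinear form. In the second case the oddness of $\lambda_{a+1}$ assumed in Corollary \ref{cor:centre osp} is precisely what makes the block swap an isometry; the same parity condition that brings the extra central element into existence is what allows an element of the component group of $G^{e}$ to negate it, giving a conceptual explanation of why $(\mathfrak{z}(\mathfrak{g}^{e}))^{G^{e}} = S$ in all cases.
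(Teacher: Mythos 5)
Your proposal is correct and follows essentially the same route as the paper: the paper also reduces to the two exceptional cases of Corollary \ref{cor:centre osp} and, for each, exhibits exactly the involutions you describe (the sign change $v_{1_k}\mapsto -v_{1_k}$ on the first Jordan block, and the swap $v_{(a+1)_k}\leftrightarrow v_{-(a+1)_k}$ of the paired blocks), verifies they preserve $B$ and centralize $e$, and observes that they negate the extra generator. Your parity explanation of why the block swap is an isometry precisely when $\lambda_{a+1}$ is odd is a nice conceptual gloss, but the underlying argument is the same.
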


\noindent \begin{proof}Recall that $\{v_{i_{j}}\}$ is a basis of
$V$ according to (\ref{eq:vij-alternative dynkin}) and the bilinear
form $B(\ldotp,\ldotp)$ on $V$ is given in Subsection \ref{subsec:Alternative-Dynkin-pyramid}.
We define an involution $\rho$ on the labels of the basis of $V$
by 
\[
\rho(i_{k})=\begin{cases}
i_{-k} & \text{if }i=1,\dots,a;\\
-i_{-k} & \text{if }i=\pm(a+1),\dots,\pm b.
\end{cases}
\]
 Let $e_{i_{t},d_{l}}$ be the $(i_{t},d_{l})$-matrix unit. Then
$\{e_{i_{t},d_{l}}\pm e_{\rho(d_{l}),\rho(i_{t})}\}$ is a basis for
$\mathfrak{g}$ with appropriate signs and conditions on $i_{t},d_{l}$.

\noindent Let $l=\lfloor\frac{m}{2}\rfloor$. By reordering boxes
on the right hand half of $\tilde{P}$ from $1,\ldots,l+n$ and boxes
on the left hand half of $\tilde{P}$ from $-(l+n),\ldots,-1$ (note
that there exists a box labelled by $0$ if $m$ is odd), we get a
basis $\{e_{j,k}+\gamma_{-k,-j}e_{-k,-j}\}$ for $\mathfrak{g}$ as
defined in (\ref{eq:basis for osp}). There exists an isomorphism
between basis element $e_{j,k}+\gamma_{-k,-j}e_{-k,-j}$ and $e_{i_{t},d_{l}}\pm e_{\rho(d_{l}),\rho(i_{t})}$.

\noindent It is clear that $S\subseteq\left(\mathfrak{z}(\mathfrak{g}^{e})\right)^{G^{e}}$.
When $\mathfrak{z}(\mathfrak{g}^{e})=S$, then $\left(\mathfrak{z}(\mathfrak{g}^{e})\right)^{G^{e}}\subseteq\mathfrak{z}(\mathfrak{g}^{e})=S$
and thus we obtain $\left(\mathfrak{z}(\mathfrak{g}^{e})\right)^{G^{e}}=S$.
The following part of this proof deals with two special cases in Corollary
\ref{cor:centre osp} where $\mathfrak{z}(\mathfrak{g}^{e})\neq S$.

\noindent Case 1: When $a\geq3$, $\lambda_{2}>\lambda_{a+1}$, $\ensuremath{\left|1\right|}=\ensuremath{\left|2\right|}=\bar{0}$
or $a=2$, $\ensuremath{\left|1\right|}\neq\ensuremath{\left|2\right|}$.

\noindent In this case, the extra basis element $x$ can be written
as $e_{1_{-\lambda_{1}+1},2_{\lambda_{2}-1}}-e_{2_{-\lambda_{2}+1},1_{\lambda_{1}-1}}$.
We consider a matrix $Q$ which sends each $v_{1_{k}}$ to $-v_{1_{k}}$
and all other $v_{i_{_{k}}}$ for $i\neq1$ to itself. Obviously $S\subseteq\left(\mathfrak{z}(\mathfrak{g}^{e})\right)^{G^{e}}\subseteq\left(\mathfrak{z}(\mathfrak{g}^{e})\right)^{Q}$.
Then we have that 
\[
B(Qv_{i_{k}},Qv_{j_{l}})=\begin{cases}
(-1)^{k}\delta_{i_{k},\rho(j_{l})} & \text{if }1\leq i,j\leq a;\\
\delta_{i_{k},\rho(j_{l})} & \text{if }\pm(a+1)\leq i,j\leq\pm b;\\
0 & \text{otherwise}.
\end{cases}
\]
Thus $Q$ preserves the form $B$ on $V$. Hence, we have that $Q\in G$.
For any $i_{k},j_{l}$, observe that 
\[
Qe_{i_{k},j_{l}}Q^{-1}=\begin{cases}
-e_{i_{k},j_{l}} & \text{if }i\neq j\text{ and }i=1\text{ or }j=1;\\
e_{i_{k},j_{l}} & \text{otherwise.}
\end{cases}
\]
 we have that 
\begin{align*}
Q\cdotp e_{1} & =Q\cdotp(\sum_{t;i=1}^{a}\varepsilon_{i_{t},i_{t-2}}e_{i_{t},i_{t-2}})=\sum_{t;i=1}^{a}\varepsilon_{i_{t},i_{t-2}}Qe_{i_{t},i_{t-2}}Q^{-1}\\
 & =\sum_{t}\varepsilon_{1_{t},1_{t-2}}e_{1_{t},1_{t-2}}+\sum_{t;i\neq1}\varepsilon_{i_{t},i_{t-2}}e_{i_{t},i_{t-2}}=\sum_{t;i=1}^{a}\varepsilon_{i_{t},i_{t-2}}e_{i_{t},i_{t-2}}=e_{1}.
\end{align*}
 Furthermore, based on the way we defined $Q,$ it fixes $e_{2}$.
This implies that $Q\in G^{e}$ and thus we know that $Q\cdotp e^{l}=e^{l}$
for all $l$ is odd. Next we have that 
\[
Q\cdotp x=QxQ^{-1}=-e_{1_{-\lambda_{1}+1},2_{\lambda_{2}-1}}+e_{2_{-\lambda_{2}+1},1_{\lambda_{1}-1}}=-x.
\]
Thus we deduce that $x\notin\left(\mathfrak{z}(\mathfrak{g}^{e})\right)^{Q}$
and $\left(\mathfrak{z}(\mathfrak{g}^{e})\right)^{Q}\subseteq S$.
Therefore, we have that $\left(\mathfrak{z}(\mathfrak{g}^{e})\right)^{G^{e}}=\left(\mathfrak{z}(\mathfrak{g}^{e})\right)^{Q}=S$
and $\dim\left(\mathfrak{z}(\mathfrak{g}^{e})\right)^{G^{e}}=\lceil\frac{\lambda_{1}-1}{2}\rceil$.

\noindent Case 2: When $\lambda_{1}<\lambda_{a+1}$, $\lambda_{a+1}>\lambda_{a+2}$,
$\ensuremath{\left|a+1\right|}=\bar{0}$ and $\lambda_{a+1}$ is odd.

\noindent In this case, the extra basis element $x$ can be written
as 
\[
x=e_{(a+1)_{-\lambda_{a+1}+1},(a+1)_{\lambda_{a+1}-1}}-e_{-(a+1)_{-\lambda_{a+1}+1},-(a+1)_{\lambda_{a+1}-1}}.
\]
We consider a matrix $Q$ which sends $v_{(a+1)_{k}}$ to $v_{-(a+1)_{k}}$
and all other $v_{i_{_{k}}}$ for $i\neq\pm(a+1)$ to itself. Then
we have that 
\[
B(Qv_{i_{k}},Qv_{j_{l}})=\begin{cases}
B(v_{i_{k}},v_{j_{l}})=(-1)^{k}\delta_{i_{k},\rho(j_{l})} & \text{if }1\leq i,j\leq a;\\
B(v_{-i_{k}},v_{-j_{l}})=\delta_{-i_{k},\rho(-j_{l})}=\delta_{i_{k},\rho(j_{l})} & \text{if }i,j=\pm(a+1);\\
B(v_{i_{k}},v_{j_{l}})=\delta_{i_{k},\rho(j_{l})} & \text{if }\pm(a+2)\leq i,j\leq\pm b;\\
0 & \text{otherwise.}
\end{cases}
\]
Thus $Q$ preserves the form on $V$ and $Q\in G$. For any $i_{k},j_{l}$,
observe that 
\begin{equation}
Qe_{i_{k},j_{l}}Q^{-1}=\begin{cases}
e_{i_{k},j_{l}} & \text{if }i,j\neq\pm(a+1);\\
e_{-i_{k},j_{l}} & \text{if }i=\pm(a+1),j\neq\pm(a+1);\\
e_{i_{k},-j_{l}} & \text{if }i\neq\pm(a+1),j=\pm(a+1);\\
e_{-i_{k},-j_{l}} & \text{if }i,j=\pm(a+1).
\end{cases}\label{eq:QeQ-1}
\end{equation}
Note that we can write $e_{2}$ to be $\sum_{\substack{t;i=\pm(a+1)}
}^{b}\varepsilon_{i_{t},i_{t-2}}e_{i_{t},i_{t-2}}$. Then we have that 
\begin{align*}
Q\cdotp e_{2} & =Q\cdotp(\sum_{\substack{t;i=\pm(a+1)}
}^{b}\varepsilon_{i_{t},i_{t-2}}e_{i_{t},i_{t-2}})=\sum_{\substack{t;i=\pm(a+1)}
}^{b}\varepsilon_{i_{t},i_{t-2}}Qe_{i_{t},i_{t-2}}Q^{-1}\\
 & =\sum_{t}\varepsilon_{(a+1)_{t},(a+1)_{t-2}}e_{-(a+1)_{t},-(a+1)_{t-2}}+\sum_{t}\varepsilon_{-(a+1)_{t},-(a+1)_{t-2}}e_{(a+1)_{t},(a+1)_{t-2}}\\
 & +\sum_{t;i\neq\pm(a+1)}\varepsilon_{i_{t},i_{t-2}}e_{i_{t},i_{t-2}}\\
 & =\sum_{\substack{t;i=\pm(a+1)}
}^{b}\varepsilon_{i_{t},i_{t-2}}e_{i_{t},i_{t-2}}=e_{2}.
\end{align*}
 Furthermore, based on the way we defined $Q,$ it fixes $e_{1}$.
This implies that $Q\in G^{e}$ and thus we know that $Q\cdotp e^{l}=e^{l}$
for all $l$ is odd. Next we have that 
\[
Q\cdotp x=QxQ^{-1}=e_{-(a+1)_{-\lambda_{a+1}+1},-(a+1)_{\lambda_{a+1}-1}}-e_{(a+1)_{-\lambda_{a+1}+1},(a+1)_{\lambda_{a+1}-1}}=-x.
\]
Thus we know that $x\notin\left(\mathfrak{z}(\mathfrak{g}^{e})\right)^{Q}$
and $\left(\mathfrak{z}(\mathfrak{g}^{e})\right)^{Q}\subseteq S$.
Therefore, we have that $\left(\mathfrak{z}(\mathfrak{g}^{e})\right)^{G^{e}}=\left(\mathfrak{z}(\mathfrak{g}^{e})\right)^{Q}=S$
and $\dim\left(\mathfrak{z}(\mathfrak{g}^{e})\right)^{G^{e}}=\lceil\frac{\lambda_{a+1}-1}{2}\rceil$.\end{proof}

\begin{rem}
If we choose $G'=\mathrm{SO}_{m}(\mathbb{C})\times\mathrm{Sp}_{2n}(\mathbb{C})$,
applying a similar argument we obtain that $\left(\mathfrak{z}(\mathfrak{g}^{e})\right)^{G'^{e}}=\mathfrak{z}(\mathfrak{g}^{e})$
if $\lambda_{2}>\lambda_{a+1}$ and $\lambda_{i}$ is even for $3\leq i\leq b$,
or $\lambda_{1}<\lambda_{a+1}$, $\lambda_{a+1}\neq\lambda_{a+2}$
and $\lambda_{\pm(a+1)}$ are the only odd parts for $1\leq i\leq b$.
For other cases, we have $\left(\mathfrak{z}(\mathfrak{g}^{e})\right)^{G'^{e}}=S$.
\end{rem}

\subsection{Proof of the Theorems\label{subsec:explanation of osp}}

\begin{singlespace}
\noindent Let $\lambda=(\lambda_{1},\dots,\lambda_{r+s})$ be the
Jordan type of $e$ as defined in (\ref{eq:gl(m,n)-partition 1}).
Based on Subsections \ref{subsec:cc-osp}--\ref{subsec:Adjoint-action-of-osp},
we have that $\left(\mathfrak{z}(\mathfrak{g}^{e})\right)^{G^{e}}=\lceil\frac{\lambda_{1}-1}{2}\rceil$. 
\end{singlespace}

\begin{singlespace}
In order to prove Theorem 1 for $\mathfrak{g}=\mathfrak{osp}(m|2n)$,
we calculate the number of labels in the labelled Dynkin diagram $\varDelta$
which are equal to $2$. Given a ortho-symplectic Dynkin pyramid $P$
and a partition $\lambda$ as defined in (\ref{eq:gl(m,n)-partition 2}),
let $r_{i}$ (resp. $s_{i}$) be the number of boxes with parity $\bar{0}$
(resp. $\bar{1}$) on the $i$th column. We observe that $\varDelta$
has no label equal to $1$ if and only if all parts of $\lambda$
are odd or all parts of $\lambda$ are even. Now assume that all labels
in $\varDelta$ equal to $0$ or $2$. Then based on the way that
labelled Dynkin diagram is constructed, we observe that $n_{2}(\varDelta)=\lfloor\frac{\lambda_{1}}{2}\rfloor$.
Note that when $\lambda_{1}$ is even, we have that $\lceil\frac{\lambda_{1}-1}{2}\rceil=\frac{\lambda_{1}}{2}$
and $n_{2}(\varDelta)=\lfloor\frac{\lambda_{1}}{2}\rfloor=\frac{\lambda_{1}}{2}$,
when $\lambda_{1}$ is odd, we have that $\lceil\frac{\lambda_{1}-1}{2}\rceil=\frac{\lambda_{1}-1}{2}$
and $n_{2}(\varDelta)=\frac{\lambda_{1}-1}{2}$. Therefore, we deduce
that $\left(\mathfrak{z}(\mathfrak{g}^{e})\right)^{G^{e}}=n_{2}(\varDelta)$. 

To calculate $\mathfrak{z}(\mathfrak{g}^{h})$, we first consider
the following example:
\end{singlespace}
\begin{example}
\begin{singlespace}
\noindent For a nilpotent element $e\in\mathfrak{g}_{\bar{0}}$ with
Jordan type $(5,3,1|3,3)$, the corresponding Dynkin pyramid is shown
in Example \ref{exa:(5,3,1|3,3)=000026(3,3|4)}. Hence, we have $n_{2}(\varDelta)=\lfloor\frac{5}{2}\rfloor=2$.
We can calculate that $\mathfrak{g}^{h}=\mathfrak{gl}(1|0)\oplus\mathfrak{gl}(2|2)\oplus\mathfrak{osp}(3|2)$.
Note that $\mathfrak{z}(\mathfrak{gl}(1|0))=I_{1}$, $\mathfrak{z}(\mathfrak{gl}(2|2))=I_{4}$
and $\mathfrak{z}(\mathfrak{osp}(3|2))=0$. Therefore, we have that
$\dim\mathfrak{z}(\mathfrak{g}^{h})=2$.
\end{singlespace}
\end{example}

\begin{singlespace}
Assume that there is no label equal to $1$ in $\varDelta$. When
all parts of $\lambda$ are odd, we observe that 
\[
\mathfrak{g}^{h}=\bigoplus_{i>0}\mathfrak{gl}(r_{i}|s_{i})\oplus\mathfrak{osp}(r_{0}|s_{0}).
\]
 When all parts of $\lambda$ are even , then $\mathfrak{g}^{h}$
is just the direct sum of $\mathfrak{gl}(r_{i}|s_{i})$ for $i>0$.
Note that $\mathfrak{z}(\mathfrak{gl}(r_{i}|s_{i}))=I_{r_{i}+s_{i}}$
and $\mathfrak{z}(\mathfrak{osp}(r_{0}|s_{0}))=0$. Hence, we have
that $\mathfrak{z}(\mathfrak{g}^{h})$ is the direct sum of $I_{r_{i}+s_{i}}$
for $i>0$. Since there are in total $\lambda_{1}$ columns in $P$
with non-zero boxes, we deduce that $\dim\mathfrak{z}(\mathfrak{g}^{h})=\lfloor\frac{\lambda_{1}}{2}\rfloor=n_{2}(\varDelta)$. 

Next we describe the relation between $\dim\left(\mathfrak{z}(\mathfrak{g}^{e})\right)^{G^{e}}$
and the sum of labels $\sum a_{i}$ in $\varDelta$. Note that $a_{i}=\mathrm{col}(i+1)-\mathrm{col}(i)$
for $i=1,\dots,l+n-1$. When $m$ is even, we have 
\[
a_{l+n}=\begin{cases}
-2\mathrm{col}(l+n) & \text{if }\left|l+n\right|=\bar{1};\\
-\mathrm{col}(l+n)-\mathrm{col}(l+n-1) & \text{if }\left|l+n\right|=\bar{0}.
\end{cases}
\]

\end{singlespace}

\begin{singlespace}
\noindent Then 
\begin{align*}
\sum a_{i} & =\sum_{i=1}^{l+n-1}\left(\mathrm{col}(i+1)-\mathrm{col}(i)\right)+\begin{cases}
-2\mathrm{col}(l+n) & \text{if }\left|l+n\right|=\bar{1};\\
(\mathrm{col}(-l-n)-\mathrm{col}(l+n-1)) & \text{if }\left|l+n\right|=\bar{0}
\end{cases}\\
 & =\begin{cases}
\mathrm{col}(-l-n)-\mathrm{col}(1) & \text{if }\left|l+n\right|=\bar{1};\\
-\mathrm{col}(l+n-1)-\mathrm{col}(1) & \text{if }\left|l+n\right|=\bar{0}.
\end{cases}
\end{align*}
 Hence, we have that $\sum a_{i}=\lambda_{1}-1$ or $\lambda_{1}$.
\end{singlespace}

\begin{singlespace}
When $m$ is odd, then there exists a box that is labelled by $0$,
we have $a_{l+n}=\mathrm{col}(0)-\mathrm{col}(l+n)$. Then
\begin{align*}
\sum a_{i} & =\sum_{i=1}^{l+n-1}\left(\mathrm{col}(i+1)-\mathrm{col}(i)\right)+(\mathrm{col}(0)-\mathrm{col}(l+n))\\
 & =\mathrm{col}(0)-\mathrm{col}(1)=\lambda_{1}-1.
\end{align*}
 Hence, we deduce that 
\[
\lceil\frac{1}{2}\sum a_{i}\rceil=\begin{cases}
\lceil\frac{\lambda_{1}}{2}\rceil & \text{\text{if} all parts of the Jordan type of }e\text{ are even;}\\
\lceil\frac{\lambda_{1}-1}{2}\rceil & \text{otherwise.}
\end{cases}
\]
 
\end{singlespace}

\begin{singlespace}
\noindent Therefore, we deduce that $\dim\left(\mathfrak{z}(\mathfrak{g}^{e})\right)^{G^{e}}=\lceil\frac{1}{2}\sum a_{i}\rceil$.
\end{singlespace}

\begin{singlespace}
In order to prove Theorem 3 for $\mathfrak{g}$, we consider two general
cases below.

\textbf{Case 1:} When $\varDelta$ has no label equal to $1$, i.e.
all labels are equal to $0$ or $2$. In this case, we know that either
all parts of $\lambda$ are odd or all parts of $\lambda$ are even.
Note that $e_{0}=0$ since $\varDelta_{0}$ has all labels equal to
$0$. Thus we have that 
\[
\mathfrak{g}_{0}^{e_{0}}=\mathfrak{g}_{0}=\left(\bigoplus_{i>0}\mathfrak{sl}(r_{i}|s_{i})\right)\oplus\mathfrak{osp}(r_{0}|s_{0})
\]
We denote $c_{i}=r_{i}+s_{i}$. Then
\[
\dim\mathfrak{g}_{0}^{e_{0}}=\dim\mathfrak{g}_{0}=\sum_{i>0}\dim\mathfrak{sl}(r_{i}|s_{i})+\dim\mathfrak{osp}(r_{0}|s_{0}).
\]
 Now if all parts of $\lambda$ are even, then we have $\dim\mathfrak{osp}(r_{0}|s_{0})=0$.
This implies that $\dim\mathfrak{g}_{0}^{e_{0}}=\sum_{i>0}(c_{i}^{2}-1).$
We also have that $\arrowvert\{i:\lambda_{i}\text{ is odd},\ensuremath{\left|i\right|}=\bar{0}\}\mid=\arrowvert\{i:\lambda_{i}\text{ is odd},\ensuremath{\left|i\right|}=\bar{1}\}\mid=0$.
Thus $\dim\mathfrak{g}^{e}=\frac{1}{2}\dim\mathfrak{gl}(m\vert2n)^{e}=\frac{1}{2}(\sum_{i\in\mathbb{Z}}c_{i}^{2})$
by Subsection \ref{subsec:c-osp}. Hence, we have that 
\[
\dim\mathfrak{g}^{e}-\dim\mathfrak{g}_{0}^{e_{0}}=\frac{1}{2}(\sum_{i\in\mathbb{Z}}c_{i}^{2})-\sum_{i>0}(c_{i}^{2}-1)=\sum_{i>0:c_{i}>0}1=n_{2}(\varDelta).
\]
Next we consider when all parts of $\lambda$ are odd, we have that
\[
\dim\mathfrak{osp}(r_{0}|s_{0})=\frac{(r_{0}+s_{0})^{2}-(r_{0}+s_{0})}{2}+s_{0}=\frac{c_{0}^{2}-r_{0}+s_{0}}{2}.
\]
Note that in this case $\dim\mathfrak{g}^{e}=\frac{1}{2}\dim\mathfrak{gl}(m|n)^{e}-\frac{r}{2}+\frac{s}{2}$
by Subsection \ref{subsec:c-osp} where $r$ (resp. $s$) is the total
number of $\lambda_{i},\ensuremath{\left|i\right|}=\bar{0}$ (resp.
$\lambda_{i},\ensuremath{\left|i\right|}=\bar{1}$). Since all parts
of $\lambda$ are odd, then each row which corresponds to a certain
part of $\lambda$ has a box in the $0$th column. Hence, the number
of $\lambda_{i}$ with $\ensuremath{\left|i\right|}=\bar{0}$ (resp.
$\ensuremath{\left|i\right|}=\bar{1}$) is equal to the number of
even (resp. odd) boxes in the $0$th column, i.e. $r=r_{0}$ and $s=s_{0}$.
Thus we can calculate
\[
\dim\mathfrak{g}^{e}=\frac{1}{2}(\sum_{i\in\mathbb{Z}}c_{i}^{2}+c_{0}^{2})-\frac{r_{0}}{2}+\frac{s_{0}}{2}.
\]
Therefore, we have that 
\begin{align*}
\dim\mathfrak{g}^{e}-\dim & \mathfrak{g}_{0}^{e_{0}}=\left(\frac{1}{2}(\sum_{i\in\mathbb{Z}}c_{i}^{2}+c_{0}^{2})-\frac{r_{0}}{2}+\frac{s_{0}}{2}\right)-\left(\sum_{i>0}(c_{i}^{2}-1)+\frac{c_{0}^{2}-r_{0}+s_{0}}{2}\right)\\
 & =\sum_{i>0:c_{i}>0}1=n_{2}(\varDelta).
\end{align*}

\end{singlespace}

\begin{singlespace}
\noindent When $r_{i}\neq s_{i}$ for all $i>0$, we have that $\dim\mathfrak{z}(\mathfrak{g}_{0}^{e_{0}})=\dim\mathfrak{z}(\mathfrak{g}_{0})=0$
because $\mathfrak{z}(\mathfrak{sl}(r_{i}|s_{i}))=0$ for all $i$
and $\mathfrak{z}(\mathfrak{osp}(r_{0}|s_{0}))=0$. However, if there
exists $r_{i}=s_{i}$ for some $i$, we have that $I_{r_{i}\mid r_{i}}\in\mathfrak{z}(\mathfrak{sl}(r_{i}|s_{i}))$
and thus $\dim\mathfrak{z}(\mathfrak{g}_{0}^{e_{0}})=\tau$ where
$\tau$ is the number of $i>0$ for which $r_{i}=s_{i}$. Hence, we
have that $\dim\left(\mathfrak{z}(\mathfrak{g}^{e})\right)^{G^{e}}-\dim\left(\mathfrak{z}(\mathfrak{g}_{0}^{e_{0}})\right)^{G_{0}^{e_{0}}}=n_{2}(\varDelta)-\tau$.
\end{singlespace}

\begin{singlespace}
\textbf{Case 2:} When there exist some labels equal to $1$ in $\varDelta$.
Note that there are in total $2\lambda_{1}+1$ columns in $P$ and
let $n_{2}(\varDelta)=t$. Observe that $\varDelta$ has some labels
equal to $2$ if there exist some $c_{k_{j}}=0$ for $j=1,\dots,t$
and $t=n_{2}(\varDelta)$. Let $k>0$ be the minimal column number
such that $c_{k}=0$ and $k+2t=\lambda_{1}$. Note that once a label
equal to $1$ occurs, say $a_{k}$, then there is no label equal to
$2$ for all $a_{h}$ with $h>k$. Then we have that 
\[
\mathfrak{g}_{0}\cong\left(\bigoplus_{i=1}^{t}\mathfrak{sl}(r_{k+2i-1}|s_{k+2i-1})\right)\oplus\mathfrak{osp}(\sum_{i=1-k}^{k-1}r_{i}\mid\sum_{i=1-k}^{k-1}s_{i}).
\]
Hence,
\[
\dim\mathfrak{g}_{0}^{e_{0}}=\sum_{i=k+1}^{\lambda_{1}}\left(c_{i}^{2}+c_{i}c_{i+1}-1\right)+\dim\mathfrak{osp}(\sum_{i=1-k}^{k-1}r_{i}\mid\sum_{i=1-k}^{k-1}s_{i}).
\]
Denote the Jordan type of $e_{0}$ to be $\lambda^{0}$ which is defined
the similar way to (\ref{eq:gl(m,n)-partition 1}). We also observe
that $\ensuremath{\left|\{i:\lambda_{i}\text{ is odd},\ensuremath{\left|i\right|}=\bar{0}\}\right|}=\ensuremath{\left|\{i:\lambda_{i}^{0}\text{ is odd},\ensuremath{\left|i\right|}=\bar{0}\}\right|}$
and $\ensuremath{\left|\{i:\lambda_{i}\text{ is odd},\ensuremath{\left|i\right|}=\bar{1}\}\right|}=\left|\{i:\lambda_{i}^{0}\text{ is odd},\ensuremath{\left|i\right|}=\bar{1}\}\right|$.
Note that 
\begin{align*}
\dim\mathfrak{osp}(\sum_{i=1-k}^{k-1}r_{i}\vert\sum_{i=1-k}^{k-1}s_{i}) & =\frac{1}{2}(\sum_{i=1-k}^{k-1}(c_{i}^{2}+c_{i}c_{i+1}))\\
 & -\frac{1}{2}\left|\{i:\lambda_{i}^{0}\text{ is odd},\ensuremath{\left|i\right|}=\bar{0}\}\right|\\
 & +\frac{1}{2}\left|\{i:\lambda_{i}^{0}\text{ is odd},\ensuremath{\left|i\right|}=\bar{1}\}\right|.
\end{align*}
 Therefore, we have that $\dim\mathfrak{g}^{e}-\dim\mathfrak{g}_{0}^{e_{0}}=\sum_{i=1}^{t}1=t=n_{2}(\varDelta)$.
Moreover, let $\mathfrak{\hat{g}}_{0}=\mathfrak{osp}(\sum_{i=1-k}^{k-1}r_{i}\mid\sum_{i=1-k}^{k-1}s_{i})$.
We observe that the projection of $e_{0}$ in each $\mathfrak{sl}(r_{k+2i-1}|s_{k+2i-1})$
is $0$ and so $e_{0}\in\mathfrak{\hat{g}}_{0}$. Thus 
\[
\dim\mathfrak{z}(\mathfrak{g}_{0}^{e_{0}})=\sum_{i=1}^{t}\dim\mathfrak{z}(\mathfrak{sl}(r_{k+2i-1}|s_{k+2i-1}))+\text{dim}\mathfrak{z}(\mathfrak{\hat{g}}_{0}^{e_{0}}).
\]
 Similar to Case 1, $\mathfrak{z}(\mathfrak{sl}(r_{k+2i-1}|s_{k+2i-1}))=0$
when $r_{k+2i-1}\neq s_{k+2i-1}$ for all $i$. If there exist some
$i>0$ such that $r_{k+2i-1}=s_{k+2i-1}$, then $I_{r_{k+2i-1}\vert r_{k+2i-1}}\in\mathfrak{z}(\mathfrak{sl}(r_{k+2i-1}|s_{k+2i-1}))$.
We know that $\dim\left(\mathfrak{z}(\mathfrak{g}_{0}^{e_{0}})\right)^{G_{0}^{e_{0}}}=\lceil\frac{k_{1}-1}{2}\rceil+\tau$
where $\tau$ is the number of positive column number $i$ such that
$r_{i}=s_{i}$. Hence, we deduce that $\dim\left(\mathfrak{z}(\mathfrak{g}^{e})\right)^{G^{e}}-\dim\left(\mathfrak{z}(\mathfrak{g}_{0}^{e_{0}})\right)^{G_{0}^{e_{0}}}=n_{2}(\varDelta)-\tau$
in this case.
\end{singlespace}

\end{document}